\theoremstyle{plain}
\newtheorem{thm}{Theorem}[subsection]
\newtheorem{prop}[thm]{Proposition}
\newtheorem{cor}[thm]{Corollary}
\newtheorem{lem}[thm]{Lemma}
\theoremstyle{definition}
\newtheorem{defn}[thm]{Definition}
\newtheorem{ex}[thm]{Example}
\newtheorem{rem}[thm]{Remark}
\newtheorem*{ack}{Acknowledgements}
\setlist[enumerate]{label=(\arabic*)}
\newcommand{\Z}{\mathbb{Z}}
\newcommand{\R}{\mathbb{R}}
\DeclareMathOperator{\Aut}{Aut}
\DeclareMathOperator{\Out}{Out}
\DeclareMathOperator{\Inn}{Inn}
\DeclareMathOperator{\Isom}{Isom}
\DeclareMathOperator{\Fix}{Fix}
\DeclareMathOperator{\GL}{GL}
\DeclareMathOperator{\Inc}{Inc}
\DeclareMathOperator{\PGL}{PGL}
\DeclareMathOperator{\Ad}{Ad}
\DeclareMathOperator{\MC}{Mc}
\DeclareMathOperator{\Hom}{Hom}
\DeclareMathOperator{\MCG}{MCG}
\newcommand{\fix}{\mbox{\rm Fix } \phi}
\newcommand{\fp}[1]{\mbox{\rm Fix } #1}
\newcommand{\rank}{\mbox{\rm Rank } }
\let\phi\varphi
\title[Automorphisms of free-by-cyclic groups]{Free-by-cyclic groups, automorphisms and actions on nearly canonical trees}
\author{Naomi Andrew and Armando Martino}
\address{Mathematical Sciences, Building 54, University of Southampton, Southampton, SO17 1BJ}
\email{N.G.Andrew@soton.ac.uk}
\email{A.Martino@soton.ac.uk}
\date{}
\begin{document}


\begin{abstract}
We study the automorphism groups of free-by-cyclic groups and show these are finitely generated in the following cases: (i) when defining automorphism has linear growth and (ii) when the rank of the underlying free group has rank at most 3. 

The techniques we use are actions on trees, including the trees of cylinders due to Guirardel and Levitt
, the relative hyperbolicity of free-by-cyclic groups (due to Gautero and Lustig, Ghosh, and Dahmani and Li) 
 and the filtration of the automorphisms of a group preserving a tree, following Bass and Jiang, and Levitt.

Our general strategy is to produce an invariant tree for the group and study that, usually reducing the initial problem to some sort of McCool problem (the study of an automorphism group fixing some collection of conjugacy classes of subgroups) for a group of lower complexity. The obstruction to pushing these techniques further, inductively, is in finding a suitable invariant tree and in showing that the relevant McCool groups are finitely generated.
\end{abstract}

\maketitle

\section{Introduction}

\subsection{Free-by-cyclic groups}

Given a finite rank free group $F_n$ and an automorphism $\varphi \in \Aut(F_n)$, we can define a free-by-cyclic group $G = F_n \rtimes_\varphi \langle t \rangle = \langle x_1, \dots x_n, t | t^{-1}x_it=x_i\varphi \rangle$ (so conjugating by the stable letter $t$ acts on $F_n$ as the automorphism $\varphi$). The properties of this free-by-cyclic group depend only on the automorphism $\varphi$, and in fact only on the conjugacy class of its image in the outer automorphism group, $\Phi$ \cite[Lemma 2.1]{Rank2Case}. 

Various properties of $G$ follow from $\varphi$ and indeed from $\Phi$: for example, $G$ is hyperbolic if and only if $\varphi$ is atoroidal (no power of $\varphi$ fixes the conjugacy class of an element in $F_n$) \cite{BrinkmannHyperbolicity}, and is relatively hyperbolic if and only if the length of some word in $F_n$ grows exponentially under iteration of $\varphi$ \cite{GauteroLustigRelHyp,GhoshRelHyp,DahmaniLiRelHyp}. Both of these properties are properties of the outer class as a whole.

In this paper we study the actions of free-by-cyclic groups on trees, and through this their automorphisms. Even in rank 1 (the two cyclic-by-cyclic groups) it is hard to say anything very general about their automorphisms: for $\Z^2$, the outer automorphism group is $\GL(2,\Z)$, whereas for the fundamental group of the Klein bottle it has only four elements.

There are groups which can be expressed as free-by-cyclic groups with more that one possibility for the rank of $F_n$. However,  there are some things which these presentations will have in common: for example, the growth rate of the outer automorphism $\Phi$ will be the same~\cite{MacuraQIinvariance}. An automorphism is polynomially growing (with degree $d$) if, as it is iterated, the conjugacy length of a word is bounded by a polynomial (of degree $d$), and (by a Theorem of \cite{BestvinaHandel1992} -- see Subsection~\ref{growth rate}) exponentially growing otherwise. We split our investigation by growth rate. 

Levitt's work on Generalised Baumslag-Solitar groups \cite{LevittGBS} includes (after checking some hypotheses) that if the defining (outer) automorphism is finite order (in which case the free-by-cyclic group $G$ is virtually $F_n \rtimes \Z$) then $\Out(G)$ is VF, and so in particular finitely generated. 

We extend finite generation to all cases where the defining outer automorphism has linear growth:

\begin{restatable}{thm}{linear}
\label{thm:linear}
	Suppose $G \cong F_n \rtimes_\varphi \Z$, and $\varphi$ is linearly growing. Then $\Out(G)$ is finitely generated.
\end{restatable}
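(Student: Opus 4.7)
The plan is to find an $\Out(G)$-invariant splitting of $G$ along abelian subgroups, and then apply the Bass-Jiang / Levitt filtration of automorphisms preserving this splitting to reduce the question to finite generation of a classical McCool group for a free group. The starting point is that, after passing to a power, a linearly growing $\varphi$ is a multiple Dehn twist on a $\varphi$-invariant cyclic splitting of $F_n$ (by Bestvina--Feighn--Handel / Cohen--Lustig). On each vertex group the restriction of this power is inner, so the mapping torus construction turns this into a splitting of (a finite-index subgroup of) $G$ whose vertex groups have the form $F_m \times \Z$ with $m < n$, and whose edge groups are $\Z^2$ (generated by an edge-stabiliser of $F_n$ together with a power of $t$).

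Next I would feed this abelian splitting into Guirardel--Levitt's tree of cylinders construction (with the admissible class taken to be the abelian subgroups appearing as edge groups). This upgrades the splitting to a genuinely canonical tree $T$, so that the full group $\Out(G)$ (or a finite-index subgroup accounting for the power above) acts on $T$. With $T$ in hand, the Bass-Jiang / Levitt machinery gives a short exact sequence
\[
1 \longrightarrow \mathcal{T} \longrightarrow \Out(G;T) \longrightarrow Q \longrightarrow 1,
\]
where $\mathcal{T}$ is the twist group along the finitely many edges of the quotient graph of groups, and $Q$ sits inside a product of relative outer automorphism groups of the vertex groups -- specifically, those automorphisms of each $F_m \times \Z$ which preserve the conjugacy classes of the incident edge groups.

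The twist group $\mathcal{T}$ is generated by Dehn twists along edges whose stabilisers are $\Z^2$, so it is finitely generated by general principles. For $Q$, the key observation is that an automorphism of $F_m \times \Z$ preserving the edge-group conjugacy classes $[\langle c_i, z\rangle]$ essentially descends to an automorphism of $F_m$ preserving the conjugacy classes $[c_i]$ (together with some bounded data from the $\Z$ factor and twists by the centre), so the relevant relative automorphism group is controlled by the classical McCool group $\Out(F_m;\{[c_i]\})$, which is finitely presented by McCool's theorem. Combining finite generation of $\mathcal{T}$ and $Q$ yields finite generation of $\Out(G;T)$, and then of $\Out(G)$ since $T$ is canonical. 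The main obstacle I expect is the bookkeeping around the power of $\varphi$ needed to realise the Dehn-twist structure and the identification of the McCool group at each vertex: one must carefully verify that canonicity of the tree of cylinders survives, and that the map from $Q$ to a product of McCool groups for free groups has finitely generated kernel coming only from the centres of the vertex groups.
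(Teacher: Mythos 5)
Your overall architecture matches the paper's: pass to a UPG (Dehn twist) power, build a canonical tree for the resulting finite-index subgroup $G_0$ via the tree of cylinders, extend to the whole group, and run the Bass--Jiang filtration. However, there is a genuine gap at the step you dismiss as bookkeeping, and it is precisely where most of the work in this case lives.

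The vertex groups of the splitting are $F_m\times\Z$ and the edge groups are $\Z^2$ only for the normal finite-index subgroup $G_0$ defined by the UPG power. Proving finite generation of $\Out(G_0)$ does not give finite generation of $\Out(G)$ (the Klein bottle versus $\Z^2$ example shows passing to finite index is not benign), so you must work with the action of the full group $G$ on the tree. There the vertex stabilisers are only \emph{virtually} free-times-cyclic: they have the form $F_m\rtimes_\psi\Z$ with $\psi$ a genuinely periodic, possibly nontrivial, outer automorphism, and the edge stabilisers may be Klein bottle groups rather than $\Z^2$. Your claim that the relative automorphism group of a vertex ``essentially descends to'' the classical McCool group $\Out(F_m;\{[c_i]\})$ fails in this generality. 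One must first use Levitt's structure theory for GBS groups with trivial modulus to split off a $\Z^k$ of central twists and reduce to the quotient $H=G_v/Z(G_v)$, which is virtually free but typically not free, and the incident edge groups become virtually cyclic subgroups of $H$. Finite generation of $\MC(H;\{H_i\})$ for a virtually free $H$ relative to virtually cyclic subgroups is not covered by McCool's 1975 theorem; it requires identifying a finite-index subgroup of $\Aut(H)$ with the centraliser-type subgroup $\Aut_H(F)$ for a free normal subgroup $F$, and then invoking the Bestvina--Feighn--Handel theorem on subgroups of $\Out(F)$ centralising a finite subgroup and fixing conjugacy classes. Without this input your argument does not close.

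A second, smaller gap: the tree of cylinders is canonical only relative to an $\Aut$-invariant deformation space, so you must justify why the deformation space of your initial splitting is preserved by all automorphisms of $G_0$. This follows from an algebraic characterisation of the vertex stabilisers (as centralisers of elements $sw$ whose associated automorphisms have fixed subgroup of rank at least two, via the Parabolic Orbits Theorem), but it needs to be said; an arbitrary invariant cyclic splitting of $F_n$ would not automatically yield this.
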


Also \cite{Rank2Case}, studies the case when the underlying free group has rank 2. There $\Out(G)$ is calculated up to finite index for all defining automorphisms, and this classification shows that $\Out(G)$ is finitely generated.

We extend the finite generation result to all cases where the underlying free group has rank 3 (in which case the growth is either at most quadratic or exponential):

\begin{restatable}{thm}{rankthree}
\label{thm:rank3}
	Suppose $G \cong F_3 \rtimes \Z$. Then $\Out(G)$ is finitely generated.
\end{restatable}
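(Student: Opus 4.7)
The plan is to split into cases by the growth rate of $\varphi$. Polynomial growth of an automorphism of $F_3$ has degree at most $2$, and within the polynomial range the finite-order case is already covered by Levitt's work on generalised Baumslag--Solitar groups, while the linear case is exactly Theorem~\ref{thm:linear}. This leaves two cases to deal with directly: quadratic polynomial growth, and exponential growth.

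For the exponential case, my approach is to exploit relative hyperbolicity. Since $\varphi$ grows exponentially, $G$ is hyperbolic relative to the family of mapping tori of the maximal polynomially growing $\varphi$-invariant subgroups of $F_3$; such subgroups have rank at most $2$. This relatively hyperbolic structure is canonical, so $\Out(G)$ preserves the associated (relative) JSJ decomposition, and passing to its tree of cylinders in the sense of Guirardel--Levitt produces an $\Out(G)$-invariant Bass--Serre tree $T$. I would then apply the Bass--Jiang/Levitt filtration to the action of $\Out(G)$ on $T$, presenting $\Out(G)$ as an iterated extension built from (i) the finite quotient $T/G$, (ii) a tree of twists (which is controlled because edge stabilisers in the tree of cylinders are cyclic or elementary), and (iii) the relative McCool groups of the vertex stabilisers. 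The vertex stabilisers are either elementary or of the form $F_k \rtimes \Z$ with $k \leq 2$, so finite generation of their McCool groups follows from the rank $\leq 2$ classification in \cite{Rank2Case} combined with Theorem~\ref{thm:linear}.

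For the quadratic case relative hyperbolicity is unavailable, so I would instead produce an invariant splitting directly from the polynomial growth structure. A canonical Kolchin-type filtration of $F_3$ by $\varphi$-invariant subgroups should descend to an invariant splitting of $G$ whose edge groups are cyclic and whose vertex groups are mapping tori of free groups of rank at most $2$ with strictly lower growth. The Bass--Jiang/Levitt machinery then reduces the problem to the McCool groups of these vertex stabilisers, handled again by Theorem~\ref{thm:linear} and the rank~$2$ case.

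The principal obstacle in both subcases is producing a tree which is genuinely $\Out(G)$-invariant rather than merely $\Phi$-invariant, and verifying that the induced McCool problems on the vertex groups fall within the range already known to be finitely generated. In the exponential case, canonicity of the relatively hyperbolic structure, together with canonicity of the tree of cylinders, should deliver this invariance; the delicate point is checking that no vertex group is again a mapping torus of $F_3$ itself, i.e.\ that the decomposition is non-trivial. In the quadratic case the analogous difficulty is to choose the Kolchin filtration so that it depends only on the outer class $\Phi$ and is preserved by the full centraliser of $\Phi$ in $\Out(F_3)$. Once these canonicity issues are settled, finite generation of the twist group reduces to standard arguments since all edge groups will be cyclic.
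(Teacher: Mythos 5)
Your case division and overall strategy coincide with the paper's: periodic via Levitt's GBS work, linear via Theorem~\ref{thm:linear}, exponential via the canonical elementary JSJ of the relatively hyperbolic structure, and quadratic via an invariant cyclic splitting, with the Bass--Jiang filtration reducing everything to twists and relative McCool groups of rank $\leq 2$ vertex stabilisers. However, there are two genuine gaps. First, in both the exponential and quadratic cases you assert that finite generation of the McCool groups $\MC(G_v;\{G_e\})$ ``follows from'' the rank $\leq 2$ classification of $\Out(G_v)$ together with Theorem~\ref{thm:linear}. Finite generation of a group does not pass to its subgroups, so knowing $\Out(G_v)$ is finitely generated says nothing about its McCool subgroups. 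The paper's Proposition~\ref{prop:McCool_for_exponential} has to do real work here: the classification shows most rank $\leq 2$ cases are finite or virtually cyclic (where every subgroup is indeed finitely generated), but for $F_2\times\Z$ one must invoke McCool's theorem that stabilisers of finite sets of conjugacy classes in $\Out(F_2)$ are finitely generated (together with the reduction from subgroups to elements), and $F_2\rtimes_{-I}\Z$ needs a separate argument inside $\PGL_2(\Z)$. A similar remark applies to your treatment of the group of twists, which is ``controlled'' only after one actually proves the centralisers of edge groups in vertex groups are finitely generated.

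Second, in the quadratic case your ``canonical Kolchin-type filtration'' is only available after replacing $\Phi$ by a UPG power $\Phi^r$, i.e.\ after passing to the finite index normal subgroup $G_0=F_3\rtimes_{\Phi^r}\Z$; and even there the invariant rank-two free factor gives a splitting that is a priori preserved only by automorphisms induced from $\Out(F_3)$ data, not by all of $\Aut(G_0)$. The paper must prove directly (via an analysis of the centraliser of the stable letter $s$) that every automorphism of $G_0$ preserves the conjugacy class of $\langle a,b,s\rangle$, and then pass to the collapsed tree of cylinders to obtain a genuinely canonical tree. More seriously, your proposal never explains how to return from $G_0$ to $G$: a canonical tree for a finite index subgroup need not yield anything for the ambient group (the Klein bottle group over $\Z^2$ is the standard warning), and the paper needs Proposition~\ref{prop:no_really_theres_an_action} --- extending a canonical $G_0$-action to a nearly canonical $G$-action, using normality of $G_0$ --- to bridge this. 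Without that step, or an argument that the splitting for a non-UPG quadratic $\Phi$ is itself invariant, the quadratic case is incomplete.
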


We understand the automorphism groups through studying certain actions of $G$ on trees. Since they are defined as HNN extensions, all free-by-cyclic groups have a translation action on the real line. But they also admit actions on more complicated trees. These actions are equivalent to alternative presentations which can provide more information about the group. To understand the automorphisms, we use particular trees which are in some sense invariant under all -- or sometimes only most -- automorphisms.

The details are different in the exponentially growing and polynomially growing cases. With exponential growth, $G$ is one-ended relatively hyperbolic, and so it has a canonical JSJ decomposition by~\cite{GuirardelLevitt2015}. These decompositions are particularly useful and well understood, and there is a description of the outer automorphism group arising from them. We describe the canonical tree and for the low rank cases carry out the calculations needed for the automorphism group in Section 4.

Using Guirardel and Levitt's tree of cylinders construction~\cite{GuirardelLevittTreesOfCylinders}, we construct canonical trees when the defining automorphism is unipotent polynomially growing (UPG) and either linear or, in low rank, quadratic. These trees arise from fixed points on the boundary of Culler-Vogtmann outer space for the defining (outer) automorphism and restricting the action to $F_n$ will give an action in the same deformation space as such a tree. Every polynomially growing  automorphism has a power which is UPG (in fact, the power can be taken to depend only on the rank of the free group -- see Definiton~\ref{defn:UPG}). This implies the existence of a normal finite index subgroup which is again free-by-cyclic, this time with a UPG defining automorphism.

Understanding the automorphisms of a finite index subgroup does not necessarily provide insight into those of the larger group: the fundamental group of a Klein bottle (with only four outer automorphisms) contains $\Z^2$ as an index 2 subgroup. A key part of our proof is that we can use the existence of a canonical splitting of a normal finite index subgroup to find a splitting of the larger group which is ``nearly canonical'' -- invariant under at least a finite index subgroup of automorphisms, see Definition~\ref{defnearlycanonical}.

The result is:

\begin{restatable*}{prop}{NearlyCanonical}
	\label{prop:no_really_theres_an_action}
	Let $G$ be a finitely generated group, $G_0$ a normal finite index subgroup of $G$, and suppose that $T$ is a canonical $G_0$-tree. Then \begin{enumerate}[(i)]
		\item $G$ acts on $T$, and this action restricts to the canonical $G_0$-action.
		\item With this action, $T$ is nearly canonical as a $G$-tree.
	\end{enumerate}
\end{restatable*}

By this result, we have an action of $G$ on a tree, and we can consider the outer automorphisms preserving this action, which is finite index in the full outer automorphism group. Understanding this group depends on understanding the vertex and edge groups, their automorphisms, and how those automorphisms interact. In particular, we need to calculate ``McCool groups'', for vertex groups with respect to adjacent edge groups: the outer automorphisms having representatives that restrict to the identity on each of a family of subgroups.

As part of our proof, we carry this calculation out for free-by-cyclic groups defined by a periodic automorphism, with respect to  a limited class of subgroups, and when the underlying free group has rank 2.

We note that there appear to be two main obstacles to extending this result further: constructing actions on trees which are (nearly) canonical, and understanding the McCool groups arising from these trees. In the exponential case, the canonical trees exist and the obstruction is only the McCool groups, which are generally required to be with respect to fairly complex subgroups. In the polynomially growing case(s) passing to a (UPG) power should lead to actions arising from limit points of $CV_n$, and with quadratic growth these are even unique -- see \cite{Lustig2017}. But it is not obvious that the deformation spaces these define are canonical. If canonical trees can be found, the McCool groups are likely to be needed relative only to infinite cyclic subgroups, which may be more manageable.

\begin{ack}
The first author was supported by an EPSRC studentship, and the second author by Leverhulme Trust Grant RPG-2018-058. We would also like to thank Gilbert Levitt and Ashot Minasyan for many helpful comments. 
\end{ack}

\section{Background}

\subsection{Notation, Actions on trees and Bass-Serre Theory}
\label{notation}
We record here some notation for actions on trees and various subgroups of (outer) automorphisms used throughout the paper.

We recall enough of Bass-Serre theory to set notation; see \cite{Serre2003} amongst others for a fuller exposition.
Following Serre, the edges of a graph come in pairs denoted $e$ and $\overline{e}$, and $\iota(e)$ and $\tau(e)$ denote the initial and terminal vertices. An orientation, $\mathcal{O}$, is a choice of one edge from each pair $\{e,\overline{e}\}$.

Let a group $G$ act on a tree $T$. We let $G_v$ and $G_e$ denote the stabiliser of a vertex $v$ or edge $e$ respectively; from the perspective of graphs of groups we use them for vertex and edge groups, and use $\alpha_e$ to denote the monomorphism $G_e \to G_\iota(e)$. Often we simply identify $G_e$ with its image $\alpha_e(G_e)$ in $G_{\iota(e)}$. An action on a tree is called \textit{minimal} if it does not admit a $G$-invariant subtree; most of our actions will be assumed to be minimal. An action on a tree is \emph{irreducible} if it does not fix a point, line, or end of the tree; to guarantee this it is sufficient that the action has two hyperbolic axes whose intersection is at most finite length. 

We use $N_G(H)$, $C_G(H)$ and $Z(H)$ for the normaliser of $H$ in $G$, the centraliser of $H$ in $G$, and the centre of $H$. To save space and subscripts in the context of an action on a tree, we let $N_e = N_{G_{\iota(e)}}(G_e)$ and $C_e = C_{G_{\iota(e)}}(G_e)$.

As usual, $\Aut(G)$ denotes the automorphisms of $G$, and $\Out(G)=\Aut(G)/\Inn(G)$ the outer automorphisms. We use lower case greek letters ($\varphi$) for automorphisms, and upper case ($\Phi$) for outer automorphisms. If the image of $\varphi$ in $\Out(G)$ is $\Phi$, we say $\varphi$ \emph{represents} $\Phi$, or $\varphi \in \Phi$ (viewing $\Phi$ as a coset of $\Inn(G)$).

Given an automorphism $\varphi$ of $G$, we can define the cyclic extension of $G$ by $\varphi$ as \[G\rtimes_\varphi \Z = \langle X,t : R, t^{-1}xt=x\varphi \rangle\] (taking $\langle X : R \rangle$ to be a presentation of $G$). Automorphisms representing the same outer automorphism define isomorphic extensions, as can be seen by introducing a new generator $t'=tg$. For this reason, we will sometimes use the notation $G \rtimes_\Phi \Z$ to refer to the isomorphism class of cyclic extensions defined by any automorphism representing $\Phi$.

For $g \in G$, we write $\Ad(g)$ for the inner automorphism of $G$ induced by $g$. 

If $G$ normalises $H$, let $\Ad(G,H)$ denote the automorphisms of $H$ induced by conjugating by elements of $G$. If $H$ is clear, it may be omitted; in particular (and assuming an action on a tree) $\Ad(N_e)$ always means the automorphisms of $G_e$ induced by conjugating by $N_e$, its normaliser in $G_{\iota(e)}$. Notice that since $N_e$ contains $G_e$, the subgroup $\Ad(N_e)$ descends to a subgroup of $\Out(G_e)$.

We identify certain ``relative'' subgroups of $\Out(G)$:
\begin{defn}
\label{def:mccool_etc}
Given a family of subgroups $\{G_i\}$ of $G$, we define \begin{itemize}
\item the subgroup $\Out(G;\{G_i\})$ to be those outer automorphisms of $G$ where for each subgroup $G_i$ there is a representative that restricts to an automorphism of $G_i$.
\item the subgroup $\MC(G;\{G_i\})$ to be those outer automorphisms of $G$ where for each subgroup $G_i$ there is a representative that restricts to the identity on $G_i$.
\end{itemize}\end{defn}
Note that these are subgroups of \emph{outer} automorphisms; any given representative will not usually have the correct restriction for every subgroup $G_i$. 

Throughout the paper we consider actions of $F_n \rtimes_\varphi \Z$ on trees. Dahmani (in Section 2.2 of \cite{Dahmani2016}) gives some useful results about such an action. The following lemma is specialised to free-by-cyclic groups; Dahmani gives it more generally for semidirect products with $\Z$ (suspensions, in the terminology of that paper) of any finitely generated group.

\begin{lem}
\label{lem:stabs_are_free_by_cyclic}
	Suppose $G \cong F_n \rtimes \langle t \rangle$ acts minimally and irreducibly on a tree. Then \begin{enumerate}
		\item $F_n$ acts on the same tree with finite quotient graph
		\item The stabilisers in any action of $G$ on a tree are again free-by-cyclic; the free part is the $F_n$-stabiliser, and the generator of the cyclic factor has the form $t^kw$.
		\item In particular, all edge stabilisers are at least infinite cyclic, and $G$ is one ended.
		\item If all incident edges at some vertex are cyclically stabilised, its stabiliser cannot be finitely generated and infinitely ended.
	\end{enumerate}
\end{lem}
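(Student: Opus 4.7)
The plan is to handle the four parts in the natural order, with (2) providing the structural backbone.

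For (1), I would argue via normality of $F_n$ in $G$: the fixed set $\Fix(F_n) \subseteq T$ is $G$-invariant, so by $G$-minimality it is empty (setting aside the degenerate case in which $F_n$ fixes all of $T$, which one would rule out under the running assumptions on the action). Then $F_n$ has a unique minimal invariant subtree $T_0$; normality makes $T_0$ also $G$-invariant, so by $G$-minimality $T_0 = T$, and the $F_n$-action is itself minimal. Since $F_n$ is finitely generated, the standard Bass-Serre fact that a minimal action of a finitely generated group has finite quotient then gives finiteness of $T/F_n$.

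Part (2) is the key structural step. For a vertex stabiliser $G_v$, restrict the quotient map $G \twoheadrightarrow G/F_n \cong \Z$ to $G_v$: the kernel is $G_v \cap F_n$, a subgroup of the free group $F_n$ and therefore free, and the image is a subgroup of $\Z$, hence cyclic. If the image is infinite cyclic, a generator lifts to an element of $G_v$ of the form $t^k w$ for some $k \neq 0$ and $w \in F_n$; since the quotient $\Z$ is free, the extension splits and $G_v \cong (G_v \cap F_n) \rtimes \langle t^k w \rangle$. The same argument applies verbatim to edge stabilisers.

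For (3), the claim that every edge stabiliser is at least infinite cyclic follows from one-endedness: a trivially stabilised edge would yield a splitting of $G$ over the trivial group, violating one-endedness, and by (2) every nontrivial stabiliser contains an infinite cyclic subgroup. One-endedness itself I would deduce from the fact that a finitely generated torsion-free group containing an infinite normal subgroup of infinite index is either one- or two-ended: for $n \geq 2$, $G$ contains $F_n$ which is not virtually cyclic, so $G$ is one-ended, and the $n = 1$ cases ($\Z^2$ and the Klein bottle group) are one-ended by direct inspection.

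Part (4) is the step I expect to present the most difficulty. Suppose for contradiction that $G_v$ is both finitely generated and infinitely ended. By (2), $G_v \cong H \rtimes \langle t^k w \rangle$ with $H$ free; if $k \neq 0$, then $G_v$ is of the form $F_m \rtimes \Z$ and hence at most two-ended, so we must have $k = 0$ and $G_v = H$ free of rank at least two. Then $G_v$ admits a nontrivial free product decomposition $A * B$, and I would try to blow up $v$ in $T$ along this splitting to produce a refined $G$-tree containing an edge with trivial stabiliser, contradicting one-endedness from (3). The subtlety is that cyclic incident edge groups in $A * B$ may not all be elliptic — hyperbolic cyclic subgroups have axes in the Bass-Serre tree of $A * B$ — so attaching the incident edges during the blow-up requires care; I would try to arrange the free splitting of $G_v$ to be elliptic on the finite family of cyclic incident edge stabilisers, invoking relative accessibility or Grushko-type arguments for free groups.
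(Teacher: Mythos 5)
Parts (1) and (2) follow essentially the paper's route (normality of $F_n$ forces its minimal subtree to be all of $T$, finite generation of $F_n$ gives a finite quotient, and stabilisers are analysed via the projection $G\to\Z$). Your argument for (3) is also fine once you restore a missing hypothesis in the ends fact you invoke: the correct statement requires the infinite normal subgroup of infinite index to be \emph{finitely generated} (as you state it, $[F_2,F_2]\trianglelefteq F_2$ is a counterexample); since $F_n$ is finitely generated this does not affect the application.

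The genuine gap is in (4), and it is exactly the subtlety flagged in the discussion after the lemma. From ``$G_v$ finitely generated'' you cannot conclude that $H=G_v\cap F_n$ is finitely generated: the kernel of a surjection of a finitely generated group onto $\Z$ need not be finitely generated, and a group of the form $F_\infty\rtimes\Z$ can be finitely generated, infinitely ended, and even free. So your step ``if $k\neq 0$ then $G_v\cong F_m\rtimes\Z$ and hence has at most two ends'' is unjustified, and the case you are left with, $k=0$, in fact never occurs: $\Z=G/F_n$ acts on the finite graph $T/F_n$, so every vertex stabiliser for that action has finite index in $\Z$, and this stabiliser coincides with the image of $G_v$ in $\Z$; hence $k\neq 0$ always. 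Note also that your $k\neq 0$ branch never uses the hypothesis that the incident edges are cyclically stabilised. The intended argument runs the other way around: a cyclic $G$-stabiliser $\langle t^ju\rangle$ (with $j\neq 0$) of an incident edge meets $F_n$ trivially, so collapsing all $F_n$-orbits of edges of $T$ not incident to the orbit of $v$ yields a \emph{free} splitting of $F_n$ with $H$ as a vertex group; Grushko then forces $H$ to have finite rank, and only then does $G_v=H\rtimes\langle t^kw\rangle$ with $k\neq 0$ exclude infinitely many ends. Your alternative blow-up strategy also cannot be repaired in general: a free vertex group need not admit a free splitting relative to a family of cyclic subgroups (e.g.\ $F_2$ relative to $\langle[a,b]\rangle$ is freely indecomposable).
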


The hypotheses given here differ slightly from Dahmani's: we demand an irreducible action while Dahmani uses ``reduced''. In fact a sufficient condition is that $F_n$ acts non-trivially, which is guaranteed by either of these conditions. 

Note that the last point is not immediately obvious: the free part at a vertex could be infinitely generated, and free by cyclic groups of this form can be infinitely ended, and even free. It implies that in any splitting of this kind there cannot be a ``quadratically hanging'' vertex group, for these have exactly the combination of properties ruled out here (see Section~\ref{sec:exp_growth}).

The crucial observation is that since $F_n$ is a normal subgroup, it also acts minimally on the whole tree. Then finite generation ensures the quotient under this action is finite. To recover the free-by-cyclic structure on the stabilisers, consider the action of $\langle t \rangle$ on the quotient graph, and lift the stabilisers back to the whole group. This kind of argument enables us to analyse the splitting of $G$ by considering the induced splitting of $F_n$.

To see the last point, in this case, note that this could only occur if the free part of the relevant vertex stabiliser was not finitely generated. Contract all other edges and consider the induced free splitting of $F_n$: this expresses $F_n$ as a free product where one free factor is not finitely generated, which is impossible. 

(More generally, the free part of a vertex may only be infinitely generated if the same is true of at least one incident edge group; control over the edge groups provides some control over the vertex groups.)

\subsection{Length Functions and Twisting Actions by Automorphisms}

Since we will usually be working with simplicial metric trees, an action of $G$ on a tree $T$ will be equivalent to a map $G \to \Isom(T)$. 

Any action of $G$ on a tree $T$ defines a \emph{translation length function} on $G$, by considering the minimum displacement of points in the tree for each element. That is, given an isometric action of $G$ on $T$, we can define the function, $l_T: G \to \R$ by $l_T(g) = \min_{x \in T} d_T(x, xg)$ (and this minimum is always realised). Note that $l_T$ is constant on conjugacy classes.

We recall a well-known Theorem of Culler and Morgan: 

\begin{thm}[{\cite[Theorem 3.7]{CullerMorgan1987Rtrees}}] 
	\label{CMlengthfn}
	Let $G$ be a finitely generated group and let $T_1, T_2$ be two $\R$-trees equipped with isometric $G$ actions which are minimal and irreducible. Then $l_{T_1}=l_{T_2}$ if and only if $T_1$ and $T_2$ are equivariantly isometric. Moreover, such an equivariant isometry is unique if it exists.
\end{thm}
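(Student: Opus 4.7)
The ``only if'' direction is immediate: if $\phi \colon T_1 \to T_2$ is an equivariant isometry, then for each $g \in G$ and $x \in T_1$ we have $d_{T_2}(\phi(x), g\phi(x)) = d_{T_1}(x, gx)$, and taking infima over $x$ gives $l_{T_2}(g) = l_{T_1}(g)$.

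For the converse, the plan is to reconstruct each $T_i$ from the length function together with the $G$-action. The key is that the length function detects whether $g \in G$ acts hyperbolically (the case $l(g) > 0$, with a unique translation axis $A_g$) or elliptically (fixing a subtree), and moreover encodes the relative position of two axes: using $l(g), l(h), l(gh)$ and $l(gh^{-1})$ one reads off whether $A_g \cap A_h$ is empty, a point, a segment, a ray, or a line, and computes the bridge length in the disjoint case. Using irreducibility, I would choose hyperbolic $g, h$ whose axes do not share an end---a condition detected by the length function, so the same pair works in both trees. From the reconstruction formulas, $A_g^{(1)} \cup A_h^{(1)}$ and $A_g^{(2)} \cup A_h^{(2)}$ are abstractly isometric via a unique isometry $\phi_0$ respecting the $\langle g, h \rangle$-action. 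Setting $\phi(\gamma x) := \gamma \phi_0(x)$ for $\gamma \in G$ extends $\phi_0$ equivariantly, and by minimality the $G$-translates of $A_g^{(1)} \cup A_h^{(1)}$ cover $T_1$, so $\phi$ is defined everywhere.

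For uniqueness, given two equivariant isometries $\phi, \psi \colon T_1 \to T_2$, the composition $\theta := \psi^{-1}\phi$ is an equivariant self-isometry of $T_1$. It preserves each axis $A_g$ setwise and commutes with the $g$-action, hence acts on $A_g$ as a translation by some $\epsilon_g \in \R$. Taking another hyperbolic $h$ with independent axis, the requirement that $\theta$ also fix (setwise) either $A_g \cap A_h$ or the unique bridge between $A_g$ and $A_h$ forces $\epsilon_g = \epsilon_h = 0$, so $\theta$ is the identity on $A_g \cup A_h$; minimality then yields $\theta = \mathrm{id}$.

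The main obstacle will be showing that the equivariantly-defined $\phi$ is well-defined on overlaps and is genuinely an isometry on all of $T_1$, rather than merely on the orbit of the base piece. In the $\R$-tree setting (as opposed to simplicial) there is no discrete edge set along which to argue, so every geometric incidence---axes meeting, bridges, branch points---must be translated into a length-function identity of Chiswell type, and equivariance then invoked to carry these identities over. Elliptic elements, whose fixed-point sets can be nontrivial subtrees rather than just points, add a further layer of care to the case analysis.
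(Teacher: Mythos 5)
This statement is not proved in the paper at all: it is quoted verbatim from Culler and Morgan (their Theorem 3.7) and used as a black box, so there is no internal proof to compare against. Judged on its own terms, your sketch assembles the standard ingredients (hyperbolic/elliptic dichotomy, the identities relating $l(g)$, $l(h)$, $l(gh)$, $l(gh^{-1})$ to the relative position of axes, minimality), and the easy direction and the uniqueness argument are essentially sound. But the construction of the isometry has a genuine gap exactly where the content of the theorem lies.

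Two concrete problems. First, the extension $\phi(\gamma x) := \gamma\phi_0(x)$ is not shown to be well defined: if $\gamma x = \gamma' x'$ with $x,x' \in A_g \cup A_h$, you need $\gamma\phi_0(x) = \gamma'\phi_0(x')$, which amounts to knowing that the relative position of \emph{every} translate $\gamma(A_g\cup A_h)$ with respect to the base piece is the same in $T_1$ and $T_2$ -- and likewise that distances between points in \emph{different} translates agree, or $\phi$ is not an isometry even where defined. Controlling the relative positions of all pairs of axes simultaneously (not just the one pair $A_g, A_h$) is precisely what Culler and Morgan's argument does, via the length-function identities applied to arbitrary pairs and products of hyperbolic elements; you have flagged this as "the main obstacle" but flagging it is not resolving it. Second, the covering claim is false as stated: minimality implies that $T_1$ is the union of the axes of \emph{all} hyperbolic elements of $G$, but the $G$-translates of $A_g \cup A_h$ only sweep out the axes of conjugates of $g$ and $h$; this union can be a proper subforest of $T_1$ (it need not even be connected), so $\phi$ is not defined everywhere. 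A correct proof must either work with all hyperbolic elements at once or add an argument that the subtree spanned by the translates is all of $T_1$ and extend $\phi$ over the connecting segments. Since this is a cited classical theorem, the honest fix is simply to invoke Culler--Morgan rather than reprove it; if you do want a self-contained proof, the well-definedness step is where almost all of the work sits.
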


\begin{rem}
	This result says that, in many cases, the translation length function determines the action. 
\end{rem} 

The action of $G$ on $T$ defines a deformation space, by considering all simplicial actions of $G$ on a tree with the same elliptic subgroups (this is an equivalence relation on $G$-trees); the elliptic subgroups are those subgroups of $G$ which fix a point in the tree. Note that there can still be vertices with stabilisers that are not conjugate to a stabiliser in the original action. (For example, consider representing a free product of three groups as a graph of groups where the underlying graph is a line versus a tripod: these are in the same deformation space, despite the extra trivially stabilised vertex.) Trees in the same deformation space \emph{dominate} each other; that is, there are equivariant maps between them.

\begin{defn}
	\label{twistedactions}
	Given an isometric action of a group $G$ on a tree, $T$, a new `twisted' action of $G$ on $T$ can be defined by pre-composing with any automorphism of $G$. That is, if $\varphi \in \Aut(G)$, then $x \cdot_\varphi g = x \cdot (g\varphi)$ 
	
	In terms of length functions, this means that $l_{ \phi T} (g) = l_T(g \phi)$. (Here $\varphi T$ is the ``twisted tree'', isometric to $T$ but with the new action defined above.) 
	
	Given a deformation space of trees, this defines an action of $\Aut(G)$ on that space. 		
\end{defn}

\begin{rem}
	Note that there is a switch from left to right; if the automorphisms of $G$ act on elements on the right then the action on trees by pre-composing is on the left and vice versa.
\end{rem}

In most cases this changes the length function; we let \[\Aut^T(G)=\{ \phi \in \Aut(G) \ : \ l_{\phi T} = l_T \}\] denote the subgroup of $\Aut(G)$ which leaves it unchanged. Notice that this is true of all inner automorphisms, so these are a subgroup of $\Aut^T(G)$. By Theorem~\ref{CMlengthfn} such an automorphism induces an equivariant isometry of $T$, and assuming the action is minimal and does not fix an end this is unique and extends to an action of $\Aut^T(G)$. This action is compatible with the original action in at least two senses: $G \rtimes \Aut^T(G)$ (with the usual action of $\Aut^T(G)$ on $G$) acts on $T$, restricting to the original action of $G$ and the induced action of $\Aut^T(G)$, and the action of $G$ on $T$ factors through the map sending each element to the inner automorphism it induces.

This is asserting the existence of a commuting diagram (see \cite{Andrew2021} for how to use Theorem~\ref{CMlengthfn} to produce this diagram):
\[
\begin{tikzcd}
	\Aut^T(G) \arrow[dotted]{r} & \Isom(T) \\
	G \arrow{u}{g \mapsto \Ad(g)} \arrow{ur}{\cdot}
\end{tikzcd}
\]

Recall (from Section~\ref{notation}) that $\Ad(g)$ is the inner automorphism induced by $g$. 

In fact, such a diagram is also sufficient to recover the definition in terms of length functions
since for any $\phi \in \Aut^T(G)$, 
$$
l_T(g) = l_T(\Ad(g)) = l_T(\Ad(g)^\phi) = l_T(\Ad({g \phi})) = l_T(g \phi),
$$
(where we are moving between the $G$ action and the $\Aut^T(G)$ action using the commutative diagram). 

We will consider $\Out^T(G)=\Aut^T(G)/\Inn(G)$, the subgroup of outer automorphisms which preserves the length function. By the correspondence theorem, many properties of $\Aut^T(G)$, such as finite index or normality, are inherited by $\Out^T(G)$.

\subsection{Trees of cylinders}

\label{treesofcylinders}
Guirardel and Levitt in \cite{GuirardelLevittTreesOfCylinders} define a tree of cylinders for a deformation space. The input is any tree in the deformation space, and an equivalence relation on the edges; the output is a tree where the induced splitting is preserved by all (outer) automorphisms which preserve the deformation space. They are our main tool for producing trees which allow us to analyse outer automorphisms by considering trees.


We start the construction by defining a family $\mathcal{E}$ of subgroups of $G$. It should be closed under conjugation, but not under taking subgroups. We then define an \emph{admissible equivalence relation} on $\mathcal{E}$~\cite[Definition 3.1]{GuirardelLevittTreesOfCylinders}. This must satisfy \begin{enumerate}
\item if $A \sim B$ then $A^g \sim B^g$ for all $g \in G$
\item if $A \leq B$ then $A \sim B$
\item Suppose $G$ acts on a tree with stabilisers in $\mathcal{E}$. If $A\sim B$, $v \in \Fix(A)$ and $w\in \Fix(B)$, then the stabiliser of any edge lying in $[v,w]$ is equivalent to $A$ (and $B$)
\end{enumerate}

To show (3) it is sufficient to show that $\langle A, B \rangle$ is elliptic~\cite[Lemma 3.2]{GuirardelLevittTreesOfCylinders}

Now suppose that $G$ acts on $T$ with edge stabilisers in $\mathcal{E}$. Define an equivalence relation on the edges of $T$ by saying $e \sim e'$ if $G_e \sim G_{e'}$. A \emph{cylinder} consists of an equivalence class of edges; the conditions on an admissible equivalence relation ensure that cylinders are connected, and that two cylinders may intersect in at most one vertex.

To construct the tree of cylinders $T_c$, replace each cylinder with the cone on its boundary~\cite[Definition 4.3]{GuirardelLevittTreesOfCylinders}. That is, there is a vertex $Y$ for every cylinder, together with surviving vertices $x$ lying on the boundary of two (or more) cylinders. Edges show inclusion of a boundary vertex $x$ into a cylinder $Y$. The stabilisers of boundary vertices are unchanged; the stabiliser of a cylinder vertex is the (setwise) stabiliser of the cylinder. Edge stabilisers are the intersection of the relevant vertex stabilisers.

The tree of cylinders $T_c$ depends only on the deformation space of $T$, in the sense that given two minimal, non-trivial trees $T,T'$ in the same deformation space, there is a canonical equivariant isomorphism between $T_c$ and $T_c'$~~\cite[Corollary 4.10]{GuirardelLevittTreesOfCylinders}. In particular this means that this tree of cylinders is fixed by any automorphism which preserves the deformation space, and so can be used to study these (outer) automorphisms.

It is always true that $T$ dominates $T_c$, but cylinder stabilisers may not be elliptic in $T$. The deformation space of the tree of cylinders depends on the size of the cylinders: if all cylinders are bounded, or equivalently contain no hyperbolic axis, then the cylinder stabilisers are elliptic in $T$ and so $T_c$ lies in the same deformation space, and conversely~\cite[Proposition 5.2]{GuirardelLevittTreesOfCylinders}.

Edge stabilisers may not be in $\mathcal{E}$; in this case the \emph{collapsed tree of cylinders} $T_c^\ast$ is defined by collapsing all edges of $T_c$ with stabilisers not in $\mathcal{E}$~\cite[Definition 4.5]{GuirardelLevittTreesOfCylinders}. Assuming that $\mathcal{E}$ is sandwich closed (if $A \leq B \leq C$ are subgroups of $G$, and $A$ and $C$ are in $\mathcal{E}$, then so is $B$), the construction is stable in the sense that $(T_c^\ast)_c^\ast=T_c^\ast$~\cite[Corollary 5.8]{GuirardelLevittTreesOfCylinders}. If $T$ and $T'$ are in the same deformation space then there is a unique equivariant isometry between $T_c^\ast$ and $(T')_c^\ast$~\cite[Corollary 5.6]{GuirardelLevittTreesOfCylinders} and again this action is canonical.

In general, there may be more restrictions put on the trees: sometimes we require that a certain collection of subgroups is elliptic. In this case the deformation space and tree of cylinders is canonical relative to the automorphisms which preserve this collection.

\subsection{Automorphisms of free groups}

We recall some of the results we will use about automorphisms of free groups. Here $\Aut(F_n)$ denotes the automorphism group of the free group of rank $n$ and $\Out(F_n) = \Aut(F_n)/\Inn(F_n)$, the group of outer automorphisms, which is the quotient by the inner automorphisms. Thus an outer automorphism is a coset of inner automorphisms, and there is an equivalence relation on this set of automorphisms called {\em isogredience}. Formally, 

\begin{defn}
	Two automorphisms $\phi, \psi \in \Aut(F_n)$ are said to be\textit{ isogredient }if they are conjugate by an inner automorphism. This is an equivalence relation when restricted to any coset of $\Inn(F_n)$, that is an element of $\Out(F_n)$. 
\end{defn}

\begin{thm}[Bestvina-Handel Theorem, \cite{BestvinaHandel1992}]
	\label{Bestvina-Handel}
	Let $\Phi \in \Out(F_n)$. Then,
	\[
	\sum \max\{ \rank(\fix) - 1, 0 \} \leq n-1,
	\]
	where the sum is taken over representatives, $\phi$, of isogredience classes in $\Phi$. 
\end{thm}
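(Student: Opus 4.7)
The plan is to follow Bestvina and Handel's original train-track approach. First, I would represent $\Phi$ by a homotopy equivalence $f \colon \Gamma \to \Gamma$ of a finite graph with $\pi_1(\Gamma) \cong F_n$, chosen to be a (relative) train-track map so that cancellation in iterates of $f$ is controlled. Choosing a lift $\tilde f \colon \tilde\Gamma \to \tilde\Gamma$ to the universal cover determines an automorphism $\phi \in \Phi$ via the formula $\phi(g) = \tilde f \, g \, \tilde f^{-1}$, where $g \in F_n$ is viewed as a deck transformation. Different lifts differ by composition with deck transformations, and two lifts yield isogredient automorphisms precisely when they are conjugate by a deck transformation; so isogredience classes in $\Phi$ correspond bijectively to $F_n$-orbits of lifts.

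Next, for each lift $\tilde f$ the fixed set $\fp{\tilde f} \subseteq \tilde\Gamma$ is a (possibly empty) subtree whose stabiliser in the deck group is exactly $\fp{\phi}$; the action of $\fp{\phi}$ on $\fp{\tilde f}$ is free, with quotient a graph of fundamental group $\fp{\phi}$. The train-track hypothesis guarantees that whenever $\fp{\phi}$ is finitely generated the quotient is finite, so $\rank \fp{\phi} - 1 = -\chi(\fp{\tilde f}/\fp{\phi})$ whenever this quotient has positive rank. Thus the left-hand side of the desired inequality can be recast as a sum of Euler-characteristic deficits of finite subgraphs of $\Gamma$, one for each isogredience class contributing non-trivially.

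The final step is a combinatorial accounting: these quotient subgraphs, coming from pairwise disjoint fixed subtrees in $\tilde\Gamma$, project into $\Gamma$ in a way sufficiently controlled that additivity of Euler characteristic bounds the total deficit by $-\chi(\Gamma) = n - 1$. Lifts giving rise to $\fp{\phi}$ of rank at most one (including those with empty fixed subtree, with only a single fixed vertex, or contributing only fixed ends on the Gromov boundary) add $0$ to the sum as stated and can be discarded. Combining these pieces yields the theorem.

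The main obstacle is making the combinatorial count precise: one must show that distinct $F_n$-orbits of lifts give rise to essentially disjoint images in $\Gamma$, so that their Euler characteristics add without loss. Concretely, if the images of $\fp{\tilde f}$ and $\fp{\tilde f'}$ overlap along a sufficiently long edge-path, the train-track property must force $\tilde f$ and $\tilde f'$ to be conjugate by a deck transformation, hence to lie in the same isogredience class. Ensuring the cancellation-free behaviour of $f$ needed for this argument is exactly what relative (and in stronger forms improved relative) train-track technology provides; without it, iterates of $f$ could collapse and the bijection between fixed points of $\tilde f$ and $\fp{\phi}$ would break down, destroying the whole bookkeeping.
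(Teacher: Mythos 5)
This theorem is not proved in the paper: it is imported from the literature (Bestvina--Handel's strengthened form of the Scott conjecture, refined by the index formula of Gaboriau--Jaeger--Levitt--Lustig), so your sketch can only be measured against the original arguments. The skeleton you describe --- realise $\Phi$ by a (relative) train track map $f\colon\Gamma\to\Gamma$, identify isogredience classes in $\Phi$ with $F_n$-orbits of lifts $\tilde f$, and bound a sum of Euler-characteristic deficits by $-\chi(\Gamma)=n-1$ --- is indeed the right outline. But two of your intermediate steps are wrong or unjustified in ways that matter.

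First, the passage from $\Fix(\tilde f)\subseteq\widetilde\Gamma$ to $\Fix(\phi)$ does not work as stated. A train track map is not injective, so $\Fix(\tilde f)$ need not be connected, is frequently empty, and its quotient by $\Fix(\phi)$ need not have fundamental group $\Fix(\phi)$; moreover you assume that $\Fix(\phi)$ is finitely generated with finite quotient graph, which is part of what is being proved (this \emph{is} the Scott conjecture), not something the train track hypothesis hands you. The correct bookkeeping goes through the dynamics at infinity: one studies the extension $\partial\tilde f$ of $\tilde f$ to $\partial F_n$, and the rank of $\Fix(\phi)$ is recovered from the stable fixed set together with the attracting fixed points on the boundary, via the index $\rank \Fix(\phi)+\tfrac12 a(\phi)-1$. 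In particular your suggestion that lifts ``contributing only fixed ends on the Gromov boundary'' can be discarded is exactly backwards: those ends carry rank that your count would otherwise miss. Second, the ``combinatorial accounting'' --- that fixed sets of non-conjugate lifts project to essentially disjoint subgraphs whose deficits add --- is the entire content of the theorem, and you have only asserted it; you even flag it yourself as ``the main obstacle''. It is not obtained by naive additivity of Euler characteristic, but by a delicate count of directions (gates) at fixed and periodic points and of illegal turns, which occupies the bulk of the source papers. As it stands, your proposal is an accurate table of contents for the known proof rather than a proof.
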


Note that isogredient automorphisms have conjugate fixed subgroups, so the ranks of the fixed subgroups do not depend on the representatives chosen.

\subsection*{Growth Rate}
\label{growth rate}

If we fix a basis, $B$, of $F_n$ then we set $\|g\|_B$ to be the length of the shortest conjugacy class of $g$ with respect to $B$, for any $g \in F_n$. We simply write this as $\|g\|$ if $B$ is understood. 

Given a $\Phi \in \Out(F_n)$ it is then clear that there exists a $\lambda$ such that, 
\[
\frac{\|\Phi^k(g)\|}{\|g\|} \leq \lambda^k,
\]
as we can simply take $\lambda$ to be the maximum conjugacy length of the image of any element of $B$. (Also note that since these are conjugacy lengths, we can apply any automorphism in the same outer automorphism class and get the same result. Thus we are effectively applying an outer automorphism.)

One of the results of \cite{BestvinaHandel1992} is that the growth of elements in this sense is either exponential or polynomial. That is, for any $g \in F_n$, we either get that, for some $\mu < \lambda$, 
\[
\mu^k \leq \frac{\|\Phi^k(g)\|}{\|g\|} \leq \lambda^k,
\]
 or there exist constants $0 < A < B$ such that 
\[
Ak^d \leq \frac{\|\Phi^k(g)\|}{\|g\|} \leq Bk^d,
\]
where $d \in \{ 0,1, \ldots, n-1  \}$.

See~\cite[Theorem 6.2]{LevittInequalities} for a precise description of the growth types of elements of $F_n$.

Accordingly, we say that 

\begin{defn}
	$\Phi \in \Out(F_n)$ has exponential growth if there is some element $g$ whose conjugacy length grows exponentially. And we say that $\Phi$ has polynomial growth of degree $d$ if the conjugacy length of every element grows polynomially and $d$ is the maximum degree of these polynomials. 
\end{defn}

Note that in our usage ``polynomial growth of degree $d$'' implies that $d$ is the smallest degree bounding the growth of every element: so for example for an automorphism of quadratic growth there will be an element whose conjugacy length grows quadratically.

We note that the property of having exponential or polynomial growth (and the degree of polynomial growth) are independent of the basis, $B$. Also, the growth type (although not the exponential growth rate) of an automorphism is the same as that of its powers. (This includes negative powers, though this is a harder fact to verify). 

\medskip

\subsection*{UPG Automorphisms}

We shall look at (outer) automorphisms of polynomial growth and consider a subclass of these, called the \emph{UPG automorphisms}.

\begin{defn} (see \cite{Bestvina2000}, Corollary 5.7.6)
	\label{defn:UPG}
	We say that $\Phi \in \Out(F_n)$ is \emph{Unipotent Polynomially Growing}, or UPG, if it has polynomial growth and it
	has unipotent image in $\GL_n(\Z)$. This is guaranteed if the automorphism induces the trivial map on the homology group of $F_n$ with $\Z_3$ coefficients. 
	
	Hence, any polynomially growing automorphism has a power which is UPG.  Moreover, this power can be taken to be uniform (given $n$). 
\end{defn}

In the subsequent arguments we will have need to refer to a particular type of free group automorphism called a \textit{Dehn Twist}. These were defined in terms of certain maps via a graph of groups - multi-twists of a graph of groups with maximal cyclic edge groups. Namely, one takes a splitting of the free group with infinite cyclic edge groups and looks at a map defined by ``twisting" along the edges. For our purposes, we shall define them as linear growth UPG automorphisms. 

\begin{defn}
	\label{DehnandUPG}
	Let $\Phi \in \Out(F_n)$. Then $\Phi$ is called a Dehn Twist automorphism of $F_n$ if it is UPG and has linear growth. 
\end{defn}

However, this definition is equivalent to that of a multi-twist.  

\begin{thm}(see \cite{Cohen1999}, \cite{Krstic2001}, \cite{BestvinaHandel1992} and \cite{Bestvina2000})
	Multi-twist automorphisms of free groups, as defined in \cite{Cohen1995} and \cite{Cohen1999},  are precisely the linear growth UPG automorphisms. Thus, these are both Dehn Twists. 
\label{dehnupg}
\end{thm}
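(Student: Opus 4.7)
The plan is to prove both implications separately. The easier direction takes a Dehn twist defined by a graph-of-groups splitting and shows directly that it has linear growth and is unipotent on homology. The substantive direction starts from a linearly growing UPG outer automorphism and extracts a graph-of-groups splitting with cyclic edge groups exhibiting it as a Dehn twist, using improved relative train-track technology.

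For the first direction, fix a graph-of-groups decomposition $\mathcal{G}$ of $F_n$ with cyclic edge groups $\langle c_e \rangle$, and let $\delta$ be the Dehn twist that fixes each vertex group pointwise and sends each stable letter $t_e$ to $c_e^{k_e} t_e$. Linear growth: any cyclically reduced word in Bass-Serre normal form of length $L$ passes through at most $L$ edges, and iterating $\delta$ inserts at most $k|k_e|$ extra generators of $\langle c_e \rangle$ at each crossing of $e$, so $\|\delta^k(w)\| \leq \|w\| + Ck$ for a constant $C$ depending only on $\mathcal{G}$ and $\delta$; linearity is attained as soon as some $k_e \neq 0$ and $w$ crosses $e$ essentially. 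Unipotence: choosing a basis of $H_1(F_n;\Z)$ adapted to a spanning tree of the underlying graph and its complementary edges, $\delta$ induces an upper-triangular matrix with $1$'s on the diagonal, so $\delta$ is UPG.

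For the converse, given a linearly growing UPG outer class $\Phi$ I pass to an improved relative train-track representative $f \colon \Gamma \to \Gamma$ in the sense of Bestvina-Feighn-Handel (such a representative exists since $\Phi$ is polynomially growing). For UPG automorphisms, Kolchin-type results of \cite{Bestvina2000} provide a filtration in which each non-fixed stratum is a single edge $e_i$ with $f(e_i) = e_i \cdot u_i$ and $u_i$ a closed Nielsen path in the lower filtration. The linearity hypothesis forces each $u_i$ to be a loop contained entirely in the maximal $f$-fixed subgraph, since any nesting --- $u_i$ passing through an edge $e_j$ with a nontrivial twistor $u_j$ --- would produce at least quadratic growth of $\|f^k(e_i)\|$. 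This normal form yields a graph-of-groups splitting with free vertex groups coming from the fixed subgraph and cyclic edge groups generated by the conjugacy classes $[u_i]$, and the prescription $f(e_i) = e_i \cdot u_i$ is precisely the Dehn twist along $e_i$ with twistor $u_i$.

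The main obstacle is the converse direction: one must translate between train-track combinatorics and Bass-Serre data, and then verify that the train-track map coincides (at the level of $\Out(F_n)$) with the Dehn twist of the extracted splitting. The key inputs are UPG (to ensure the Nielsen paths $u_i$ are genuine loops, rather than paths matched only up to orientation) and linearity of growth (to exclude nested twistors, so that all edge-group twistors live in the fixed subgraph). Granted these, the identification of $\Phi$ with a Dehn twist is essentially formal, since both outer automorphisms fix the free vertex groups and act identically on stable letters modulo edge-group powers.
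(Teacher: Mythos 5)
The paper does not actually prove Theorem~\ref{dehnupg}: it is quoted as a folklore result, and the remark immediately following it only sketches a strategy --- a layered improved relative train track representative from \cite{Bestvina2000}, linearity of growth ruling out attracting fixed points on the boundary, and then the arguments of \cite{DehnTwists} to convert the train track data into a graph-of-groups description with twistors in the sense of \cite{Cohen1999}. Your outline follows essentially the same strategy for the converse, except that where the paper's remark routes through boundary dynamics and \cite{DehnTwists}, you exclude ``nested'' twistors directly by observing that a twistor $u_i$ crossing a non-fixed edge forces quadratic growth of $e_i$; that is a legitimate and arguably more self-contained way to see that all twistors are Nielsen loops in the fixed subgraph (in the general UPG normal form the $u_i$ are merely closed paths in lower strata, not Nielsen paths --- it is exactly the linearity that upgrades them).

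The one place where your sketch elides genuine content is the forward direction's claim that ``linearity is attained as soon as some $k_e\neq 0$ and $w$ crosses $e$ essentially.'' The normal-form estimate gives the upper bound $\|\delta^k(w)\|\le \|w\|+C\|w\|k$ easily, but the matching lower bound is not formal: $\delta^k(w)$ can cyclically reduce, and for a non-efficient choice of splitting and twistors the resulting automorphism can be finite order or even trivial (take inverse twistors on the two sides of an edge, or a twistor that is peripheral in the wrong sense). The statement is only true for \emph{efficient} Dehn twists in the sense of Cohen--Lustig, and the fact that an efficient twist with nontrivial twistors has unbounded (hence exactly linear) growth is a theorem of \cite{Cohen1995,Cohen1999}, not an observation about word length. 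Since the paper explicitly offers the theorem ``as a definition'' and defers to the literature, this does not affect anything downstream, but if you want your argument to stand alone you need either to build efficiency into your definition of Dehn twist or to cite the Cohen--Lustig result at that step.
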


\begin{rem}
	As commented in \cite{Krstic2001}, Theorem~\ref{dehnupg} is not proved explicitly in the papers cited, but is well known to experts. The idea is that a UPG automorphism has a `layered' improved relative train track representative by \cite{Bestvina2000}. The fact that it has linear growth will imply that there are no attracting fixed points on the boundary, and from there is it relatively straightforward to produce a graph-of-groups description in terms of the `twistors' of \cite{Cohen1999}. The arguments in \cite{DehnTwists} show how to go from the relative train track map to the graph of groups description explicitly. 
\end{rem}

There is also another characterisation of Dehn Twists, as given by \cite{DehnTwists}. 

\begin{thm}[Theorem 4.2 of \cite{DehnTwists} and Corollary 7.7 of \cite{Cohen1999}]
	\label{definetwists}
	Let $\Phi \in \Out(F_n)$ be an outer automorphism of the free group of rank n. Then $\Phi$ is a Dehn Twist if (and only if)
	$$
		\sum \max\{ \rank(\fix) - 1, 0 \} = n-1,
	$$	
		where the sum is taken over representatives, $\phi$, of isogredience classes in $\Phi$. That is, the inequality in Theorem~\ref{Bestvina-Handel} is an equality. 	
\end{thm}


A crucial Theorem about Dehn Twists is the Parabolic Orbits Theorem, which requires a little notation to set up. The context is Culler-Vogtmann space, $CV_n$, which is the space of free, simplicial actions of $F_n$ on metric trees. In this formulation, two points -- actions on trees -- are said to be equivalent if there is an equivariant homothety between them. There is a compactification of this space, $\overline{CV_n}$, which turns out to be the space of \emph{very small actions} of $F_n$ on $\R$-trees. 
The precise definition is not necessary here, but it is worth noting that the compactification includes points which are actions on trees that are \emph{not} simplicial $\R$-trees. 

There is a natural action of $\Out(F_n)$ on $CV_n$ and $\overline{CV_n}$, as in Definition~\ref{twistedactions}, obtained by pre-composing the action by automorphisms. 

\begin{thm}[Parabolic Orbits Theorem -- see \cite{Cohen1995} and \cite{Cohen1999}]
\label{thm:parabolic_orbits}
	Let $\Phi \in \Out(F_n)$ be a Dehn Twist. Then for any $X \in CV_n$, $\lim_{k \to \infty} \Phi^k(X)=T \in \overline{CV_n} $ exists, is a simplicial tree and lies in the same simplex -- any two such limit trees are equivariantly homeomorphic -- independently of $X$. Moreover, $T$ is a simplicial $F_n$-tree with the following properties. 
	
	\begin{enumerate}[(i)]
		\item Edge stabilisers are maximal infinite cyclic 
		\item Vertex stabilisers are precisely the subgroups $\fix$, where $\phi \in \Phi$ has a fixed subgroup of rank at least 2. 
	\end{enumerate}
\end{thm}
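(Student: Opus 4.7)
The plan is to construct $T$ directly from the graph-of-groups data underlying the Dehn twist $\Phi$, check it lies in $\overline{CV_n}$, and prove convergence $\Phi^k(X) \to T$ via length functions and Theorem~\ref{CMlengthfn}. By Theorem~\ref{dehnupg}, $\Phi$ is realised by a graph-of-groups decomposition $\mathbb{G}$ of $F_n$ with infinite cyclic edge groups and a choice of twistor $z_e$ on each edge; after standard reductions we may assume edge groups are maximal cyclic in the adjacent vertex groups and every vertex group has rank at least $2$. Let $T$ be the Bass-Serre tree of $\mathbb{G}$: it is a simplicial $F_n$-tree with maximal infinite cyclic edge stabilisers and non-cyclic vertex stabilisers, hence very small, and so defines a point of $\overline{CV_n}$. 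This establishes (i).

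For convergence, choose a representative $\phi \in \Phi$ that fixes each vertex group of $\mathbb{G}$ pointwise; such a $\phi$ is built directly from the twistors. Definition~\ref{twistedactions} gives $l_{\Phi^k(X)}(g) = l_X(\phi^k g)$ for any $X \in CV_n$ and $g \in F_n$. When $g$ is elliptic in $T$ (conjugate into a vertex group), $\phi^k(g)$ stays in a fixed conjugacy class and so $l_X(\phi^k g)$ remains bounded in $k$. When $g$ is hyperbolic in $T$, $\phi^k(g)$ is obtained from $g$ by inserting a power of $z_e^k$ at each crossing of its axis with an edge orbit, so $l_X(\phi^k g)$ grows linearly in $k$ with per-edge coefficients determining $l_T(g)$, the constants depending only on $X$ and the twistors. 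Rescaling by $k$ and projectivising, $\Phi^k(X) \to T$ in $\overline{CV_n}$ by Theorem~\ref{CMlengthfn}; the combinatorial type of $\mathbb{G}$ places the limits for different $X$ in a common simplex, even though edge lengths may vary with $X$.

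For (ii), the chosen $\phi$ already satisfies $G_v \subseteq \Fix(\phi)$ for every vertex $v$ of $T$, and conjugating $\phi$ by inner automorphisms (cycling through isogredience classes) realises every vertex stabiliser orbit as some $\Fix(\phi')$ with $\phi' \in \Phi$. The main obstacle is the converse: showing any $\phi' \in \Phi$ with $\Fix(\phi')$ of rank at least $2$ has fixed subgroup conjugate to a vertex stabiliser. The strategy is to show $\Fix(\phi')$ acts elliptically on $T$ --- a hyperbolic fixed element would force the twistor product along its axis to be trivial, which is ruled out by the Dehn twist structure --- so $\Fix(\phi')$ sits inside a vertex stabiliser. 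A counting argument using Theorem~\ref{Bestvina-Handel} then matches the isogredience classes of qualifying representatives with vertex orbits of rank at least $2$, forcing equality rather than proper inclusion. Carrying this matching out precisely is where the arguments of \cite{Cohen1999} are most intricate, and is the step I would expect to occupy the bulk of a full proof.
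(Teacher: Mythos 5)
This theorem is quoted in the paper as background, attributed to Cohen--Lustig \cite{Cohen1995,Cohen1999}; the paper gives no proof of it, so there is nothing internal to compare your argument against. Your outline does follow the route of the cited sources: realise $\Phi$ by a graph of groups with twistors, take its Bass--Serre tree, verify convergence by length functions, and identify vertex stabilisers with fixed subgroups.

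Two places in your sketch conceal the real work. First, ``after standard reductions we may assume edge groups are maximal cyclic \ldots{} and every vertex group has rank at least $2$'' is not a routine normalisation: an arbitrary graph-of-groups realisation of a Dehn twist can have cyclic vertex groups (e.g.\ $a \mapsto a$, $b \mapsto ba^k$ is naturally seen on a splitting with vertex group $\langle a\rangle$, whereas the limit tree has vertex stabiliser $\langle a, bab^{-1}\rangle$), and passing to the \emph{efficient} representative --- nontrivial, unbonded twistors, no invisible vertices, edge groups maximal --- is the bulk of \cite{Cohen1999}. Without it both your identification of $T$ and your linear-growth estimate fail. Second, the claim that $l_X(g\phi^k)$ grows linearly with leading coefficient $l_T(g)$ requires a bounded-cancellation argument (inserting $z_e^{\pm k}$ at edge crossings can a priori cancel), and this is exactly where efficiency is used. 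By contrast, your plan for (ii) can be closed cleanly and you should not defer it: an Euler characteristic count gives $\sum_v(\operatorname{rk}(G_v)-1)=n-1$ for a splitting with cyclic edge groups, the representatives $\phi_v$ fixing distinct vertex orbits are non-isogredient (their fixed subgroups are elliptic and distinct vertex stabilisers meet in cyclic groups), so these classes already saturate the bound of Theorem~\ref{Bestvina-Handel}; equality forces $\Fix(\phi_v)=G_v$ and rules out any further isogredience class of rank at least $2$. So the strategy is the correct one, but as written the proposal defers precisely the steps that constitute the proof.
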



Since inner automorphisms only fix infinite cyclic groups, and as any vertex stabiliser, $H$, has rank at least 2, then the corresponding automorphism $\phi \in \Phi$, such that \hbox{$H=\fix$,} is uniquely defined. 

Note that if we take two vertices of $T$ in the same orbit, then their stabilisers are conjugate, and the corresponding automorphisms are isogredient. (In general, having conjugate fixed subgroups is not enough to imply isogredience, but it is when the fixed subgroup has rank at least 2). Conversely, if two vertices are in different orbits then the corresponding automorphisms are not isogredient, since edge stabilisers are cyclic.

Given a Dehn Twist, $\Phi$ and a $\phi \in \Phi$, one can construct the free-by-cyclic group, \hbox{$G=F_n \rtimes_{\phi} \langle s \rangle$} (one can do this for any free-by-cyclic group, and the group does not depend on the choice of $\phi$). Using the parabolic orbits Theorem, one gets that $G$ acts on $T$, with the following properties, 

\begin{enumerate}[(i)]
	\item The induced action of $s$ on the quotient of $T$ by $F_n$ is trivial, 
	\item The $G$ edge stabilisers are maximal $\Z^2$, 
	\item The $G$ vertex stabilisers are $F_k \times \Z$, for $k \geq 2$, 
	\item The element $sg$ fixes a vertex of $T$ if and only if $\phi \Ad(g)$ has a fixed subgroup of rank at least 2 (equivalently, if $sg$ has non-abelian centraliser). 
\end{enumerate}

\section{Extending actions to the automorphism group}

\subsection{Canonical actions and nearly canonical actions}
Since outer automorphisms of free groups often have a power that is better understood (for us, usually a UPG power of a polynomially growing outer automorphism) it can be easier to work with the free-by-cyclic group defined by this power, which is a finite index subgroup of $G$. However, this means understanding how the automorphisms of a group and a finite index subgroup relate. In general this is hard: recall that the Klein bottle group has a finite outer automorphism group, but contains $\Z^2$ as a finite index subgroup.

Recall that, by Theorem~\ref{CMlengthfn}, the action of $G$ on a tree, $T$, is encoded by its translation length function, $l_T$. Our strategy is to show that $\Aut^T(G)$, the subgroup of automorphisms preserving the tree (or, equivalently, length function) is finitely generated. Thus we need to find a tree $T$, such that $\Aut^T(G)$ is either equal to $\Aut(G)$ or is a finite index subgroup of it. 

However, the proof of one of our key Lemmas (Lemma~\ref{prop:no_really_theres_an_action}) requires us to work with the actions directly rather than via length functions. Therefore we make the following definitions:  
\begin{defn}
An action of a group $G$ on a tree, $T$, is called \emph{canonical} if there exists a commuting diagram: 
$$
\begin{tikzcd}
	\Aut(G) \arrow{r} & \Isom(T) \\
	G \arrow{u}{g \mapsto \Ad(g)} \arrow{ur}{\cdot}.
\end{tikzcd}
$$

In the case where $G$ is finitely generated and the $G$-action is minimal and does not preserve an end, this is equivalent -- by Theorem~\ref{CMlengthfn} --  to translation length function being preserved by all of $\Aut(G)$, that is $\Aut^T(G)=\Aut(G)$ (and $\Out^T(G)=\Out(G)$).
\end{defn}

\begin{defn}
\label{defnearlycanonical}
We say that an action of $G$ on a tree, $T$, is called \emph{nearly canonical} if there is a finite index subgroup, $\Inn(G) \leq A \leq \Aut(G)$ such that the following diagram commutes: 
$$
\begin{tikzcd}
	A \arrow{r} & \Isom(T) \\
	G \arrow{u}{g \mapsto \Ad(g)} \arrow{ur}{\cdot}.
\end{tikzcd}
$$
In the case where $G$ is finitely generated and the $G$-action is minimal and does not preserve an end, this is equivalent to 
the translation length function being preserved by a finite index subgroup of automorphisms; that is $\Aut^T(G)$ is a finite index subgroup of $\Aut(G)$. 
\end{defn}

\begin{rem}
	We are not aware of this last definition in the literature, but it is clearly useful. Also, we have avoided calling this \textit{virtually canonical} since it would raise the confusion between what we mean, and canonical for a finite index subgroup of $G$. (See Proposition~\ref{prop:no_really_theres_an_action}). 
\end{rem}

We are able to extend a canonical action of a normal finite index subgroup to a nearly canonical action of the whole group, as shown in the following proposition. 

\NearlyCanonical

\begin{proof}
	
The hypotheses tell us that the action of $G_0$ on $T$ factors through an action of $\Aut(G_0)$ on $T$:
$$
\begin{tikzcd}
	\Aut(G_0) \arrow{r} & \Isom(T) \\
	G_0 \arrow{u}{g \mapsto \Ad(g)} \arrow{ur}{\cdot}
\end{tikzcd}
$$

We let $A$ denote the subgroup of $\Aut(G)$ which preserves $G_0$ setwise. The restriction map (which in general is neither injective nor surjective) gives us a homomorphism from $A$ to $\Aut(G_0)$, and so $A$ acts on $T$ via this map (and the previous action of $\Aut(G_0)$). 
$$
\begin{tikzcd}
	A \arrow{r}{res} & \Aut(G_0) \arrow{r} &  \Isom(T) \\
\end{tikzcd}
$$

 Since $G_0$ is normal, $\Inn(G)$ is a subgroup of $A$, and so this action defines an action of $G$ on $T$. 
$$
\begin{tikzcd}
	A \arrow{r} & \Aut(G_0)  \arrow{r} & \Isom(T) \\
	G \arrow{u}{g \mapsto \Ad(g)} 
\end{tikzcd}
$$

In particular, with respect to this action, $A \leq \Aut^T(G)$. Moreover, since $G_0$ is finite index in $G$, $A$ is a finite index subgroup of $\Aut(G)$ and hence the action of $G$ on $T$ is nearly canonical.

Finally, this action of $G$ on $T$ extends the original action of $G_0$ since the following diagram commutes (the left two maps from $G_0$ are just the maps sending a group element to the inner automorphism it defines, and the rightmost map is the one given by the original action of $G_0$): 
\[
\begin{tikzcd}
	A \arrow{r} & \Aut(G_0) \arrow{r} & \Isom(T) \\ \\
	G_0 \arrow{uu} \arrow{uur} \arrow{uurr}{\cdot}
\end{tikzcd} \qedhere
\]
\end{proof}

\begin{rem}
	One can clearly weaken the hypothesis in the Proposition above so that $T$ is only nearly $G_0$ canonical, and essentially the same proof works. However, the normality of $G_0$ seems essential to get a $G$-action. If $G_0$ were not normal, one could pass to a further finite index subgroup, $H$, of $G_0$ which would be normal in $G$. But then the action of $H$ on $T$ has no reason to be canonical or nearly canonical. The example of $\Z^2$ in the Klein bottle group shows that passing to a finite index subgroup is not a benign process from this point of view. 
\end{rem}

\subsection{Automorphisms which preserve a splitting, and a theorem of Bass--Jiang}
Our proof strategy is to use trees of cylinders to produce a tree where enough of the (outer) automorphisms act, and then to analyse that subgroup. (There are some shortcuts when the defining automorphism is exponentially growing, and we do not have to do all the work ourselves.)

There is a thorough discussion of the structure of the group $\Out^T(G)$ of outer automorphisms that preserve an action on a tree in \cite{BassJiang}.

We recall below the main structural theorem of that paper. Note though that to save on notation we do not state the result in full. (To be precise, their result allows for a centre, although the filtration becomes a step longer. Also, they give a precise description of the quotients at (4) and (5).)

\begin{thm}[{\cite[Theorem 8.1]{BassJiang}}]
	\label{thm:BassJiang}
	Suppose a centreless group $G$ acts on a tree $T$, minimally and irreducibly. 
	Write $\Gamma$ for the quotient graph, and $\mathcal{O}$ for a (fixed) choice of orientation of the edges of $\Gamma$.
	Suppose $\Out^T(G)$ is the subgroup of $\Out(G)$ which acts on $T$ -- that is, preserves the length function of the action.
	Then there is a filtration of $\Out^T(G)$, \[\Out^T(G) \trianglerighteq \Out^T_0(G) \trianglerighteq \mathcal{T}^+ \trianglerighteq \mathcal{T} \trianglerighteq K \trianglerighteq 1\]
	The quotients at each stage are as follows: \begin{align}
		\Out^T(G) / \Out^T_0(G) &\leq \Aut(\Gamma) \\
		\Out^T_0(G) / \mathcal{T}^+ &\cong {\prod_{v \in V(\Gamma)}}'  \Out(G_v;\{G_e\}_{\iota(e)=v}) \\
		\mathcal{T}^+ / \mathcal{T} &\cong \prod_{e \in \mathcal{O}} \frac{\Ad(N_e) \cap \Ad(N_{\overline{e}})}{\Inn(G_e)} \\
		\mathcal{T}/K \quad & \text{is a quotient of } \prod_{e \in E(\Gamma)} C_{G_\iota(e)}(G_e)\\
		K \quad & \text{is a quotient of } {\prod_{e  \in \mathcal{O}}}Z(G_e)
	\end{align}
\end{thm}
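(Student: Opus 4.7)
The plan is to construct the filtration by progressively refining how much information an outer automorphism $\Phi \in \Out^T(G)$ retains about the splitting. By Theorem~\ref{CMlengthfn}, every $\Phi \in \Out^T(G)$ is realised by a unique equivariant (with respect to the twisted action) isometry $\sigma_\Phi$ of $T$, and $\sigma_\Phi$ descends to a combinatorial automorphism of the quotient graph $\Gamma$. This yields a homomorphism $\Out^T(G) \to \Aut(\Gamma)$ whose kernel, by definition, is $\Out^T_0(G)$: the outer automorphisms preserving each $G$-orbit in $T$. This gives the first quotient.

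For the second quotient, I would fix a lift in $T$ of each vertex and each oriented edge of $\Gamma$, consistent with $\mathcal{O}$. Given $\Phi \in \Out^T_0(G)$, the isometry $\sigma_\Phi$ sends each chosen lift to another lift in the same $G$-orbit, so after multiplying a representative $\phi$ by a suitable inner automorphism we may assume $\sigma_\Phi$ fixes every chosen lift. Then $\phi$ restricts to an automorphism of each vertex group $G_v$ that preserves every adjacent edge subgroup $G_e$, and the outer class of this restriction lies in $\Out(G_v; \{G_e\}_{\iota(e)=v})$. Since a single $\phi$ can be inner on all but finitely many vertex groups, the assembled data lies in the restricted product ${\prod}'_{v} \Out(G_v; \{G_e\})$. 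The kernel $\mathcal{T}^+$ then consists of outer automorphisms admitting a representative which is inner on every vertex group.

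For the remaining layers, I would examine a representative $\phi$ of $\Phi \in \mathcal{T}^+$ more carefully. After a further inner adjustment we may arrange that $\phi$ is the identity on a chosen base vertex group and acts as $\Ad(g_v)$ on each other $G_v$, where $g_v$ is determined modulo $Z(G_v)$. Consistency along an edge $e$ forces $g_{\iota(e)}$ to normalise $G_e$ and to induce the same automorphism of $G_e$ as $g_{\tau(e)}$; the edge datum is therefore a well-defined class in $(\Ad(N_e) \cap \Ad(N_{\overline{e}}))/\Inn(G_e)$, producing the map at the third level whose kernel is $H$. For $H$ the per-edge datum refines to a class in $\Ad(C_e)$, giving a map to the product of centralisers; for its kernel $K$ the datum further reduces to an element of $Z(G_e)$ per edge. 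Both of the last two descriptions are only quotients because the assembled per-edge data must satisfy a compatibility condition around loops in $\Gamma$ in order for the resulting collection of twists to come from a genuine automorphism of $G$.

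The main obstacle is the systematic bookkeeping of the layered ambiguities — choice of vertex and edge lifts, choice of representative $\phi$, choice of conjugating element modulo $Z(G_v)$, and choice of twist modulo $G_e$ — and verifying at each stage that the claimed map is a well-defined homomorphism with exactly the stated kernel. The subtlest point is the passage from a naive product of per-edge data to the actual quotients at levels $H/K$ and $K$: one needs the Bass--Serre machinery for graphs of groups to verify that a putative family of edge twists assembles into an automorphism of $G$ if and only if a cocycle condition around every cycle of $\Gamma$ is satisfied, which is precisely why the products in (4) and (5) only admit surjections from, rather than isomorphisms with, the corresponding quotients.
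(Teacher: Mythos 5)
First, note that the paper does not prove this statement at all: it is quoted as \cite[Theorem 8.1]{BassJiang} (with Levitt's account in \cite{LevittAutosHyperbolic} as a companion reference), so there is no internal proof to compare yours against; what follows measures your sketch against the actual content of the cited result.

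Your outline of levels (1)--(2) has the right shape but two genuine problems. You cannot, with a single inner automorphism, arrange that the equivariant isometry fixes \emph{every} chosen lift of a vertex and edge of $\Gamma$; one inner correction fixes one basepoint, and the standard argument instead corrects separately at each orbit representative by some $g_v$ and then checks the induced class in $\Out(G_v;\{G_e\}_{\iota(e)=v})$ is independent of all choices (also, the corrected restriction sends each incident $G_e$ only to a $G_v$-conjugate, not to itself, which is exactly why the target is $\Out(G_v;\{G_e\})$ as defined here). More seriously, you have misread the primed product: in this statement it is \emph{not} the classical restricted product (``inner on all but finitely many factors'') but the subgroup cut out by the edge-compatibility condition spelled out after the theorem, namely that the outer automorphism induced on $G_e$ from the $\iota(e)$ side agrees with one induced from the $\tau(e)$ side. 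Your argument neither verifies this condition for the image nor addresses surjectivity onto it, and (2) and (3) are asserted as isomorphisms, so the realisation of compatible local data by a global automorphism (a genuine graph-of-groups construction) is missing. Finally, your explanation of why (4) and (5) are only ``quotients'' is backwards: every family of elements $c_e\in C_{G_{\iota(e)}}(G_e)$ does assemble into an automorphism of $G$ (twists always exist; no cocycle condition around loops is needed), and the natural map $\prod_e C_{G_{\iota(e)}}(G_e)\to\Out^T(G)$ fails to be injective rather than failing to be defined -- its kernel contains the relations coming from simultaneously twisting by an element of $Z(G_v)$ at all edges incident to $v$, and from elements of $Z(G_e)$ acting on the two sides of an edge. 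That non-injectivity, not a realisability obstruction, is why $H/K$ and $K$ are quotients of the stated products.
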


The ``prime'' on the product at (2) indicates that it is restricted to elements where for every incident edge $e_0$ the induced outer automorphism of $G_{e_0}$ is also induced by an element in $\Out(G_{\tau(e_0)};\{G_e\}_{\iota(e)=\tau(e_0})$. 

This property is characterised by the following commutative diagram. Suppose $(\Theta_v)$, with $v$ ranging through the vertices of $\Gamma$, is an element of the product \[{\prod_{v \in V(\Gamma)}}  \Out(G_v;\{G_e\}_{\iota(e)=v}).\] Then $(\Theta_v)$ is an element of the restricted product if and only if for every edge $e$, with $v=\iota(e)$ and $w=\tau(e)$ there are representatives $\theta_v$ and $\theta_w$ of the relevant outer automorphisms (of $G_v$ and $G_w$), and an automorphism $\psi$ of $G_e$ so that both squares commute.

\[\begin{tikzcd}
G_v \arrow{d}[swap]{\theta_v} &   G_e \arrow{l}[swap]{\alpha_e} \arrow{r}{\alpha_{\overline{e}}} \arrow{d}{\psi}   &  G_w \arrow{d}{\theta_w}\\
G_v   &  G_e  \arrow{l}{\alpha_e} \arrow{r}[swap]{\alpha_{\overline{e}}}  &   G_w \end{tikzcd}\]

There is another exposition in \cite{LevittAutosHyperbolic}, from where we have borrowed some notation (for example, $\mathcal{T}^+$ is Levitt's bi-Twists).

\begin{ex}
	\label{terms}
	Let us explain the quotients in Theorem~\ref{thm:BassJiang}, and illustrate them with an example. 
	
	Use the notation as above, and let $\Phi \in \Out^T(G)$. Then this (outer) automorphism falls into some term of the filtration, and each term has a geometric meaning in terms of the action of $\Phi$ on $T$. 
	
	\begin{enumerate}[(1)]
		\item $\Phi$ is non-trivial in the first term if it induces a non-trivial automorphism of the quotient graph, $\Gamma = T/G$. 
		\item If $\Phi$ is trivial in the first term, then it induces an (outer) automorphism at every vertex group (with some extra compatibility conditions). It is then non-trivial in the second term if it induces a non-trivial outer automorphism at some vertex group. 
		\item The third term consists of \textit{bi-Twists}. These are trivial in the first and second terms, but induce automorphisms of the edge groups which are non-trivial outer automorphisms. 
		\item The fourth term consists of \textit{Twists}. The subgroup $\mathcal{T}$ appearing in Theorem~\ref{thm:BassJiang} is the same subgroup as the $\mathcal{T}$ of Theorem~\ref{thm:rel_hyp_split}. These are trivial in the preceding terms, inducing the trivial outer automorphism on both vertex and edge groups. However, to be non-trivial in the quotient $\mathcal{T}/K$, the conjugations induced cannot be realised by elements of the centre of the edge group.
		\item Finally, we get the \textit{Dehn Twists} (this matches our terminology for free groups, when the tree is an $F_n$ tree with maximal cyclic edge stabilisers). These are induced by conjugations by elements of the centres of the edge groups. 
	\end{enumerate}

In many situations, various of these terms may be avoided by passing to a finite index subgroup of $\Out^T(G)$. This is the case in Theorem~\ref{thm:rel_hyp_split}, where the subgroup $\Out^1(G;\mathcal{P})$ is a finite index subgroup which induces trivial graph automorphisms and so that bi-Twists are all, in fact, Twists. The subgroup, $\mathcal T$ is then the group of twists. 

Let us illustrate this with example where we can find non-trivial automorphisms in all the terms above. We will take an amalgamated free product of two Klein bottles, glued together over an infinite cyclic group which is not maximal. 

Concretely, 
$$
\begin{array}{rcl}
	K_1 & = & \langle a_1, t_1 \ : \ a_1^{t_1} = a_1^{-1} \rangle \\ \\
	K_2 & = & \langle a_2, t_2 \ : \ a_2^{t_2} = a_2^{-1} \rangle \\ \\
	G & = & K_1*_{\langle a_1^2=a_2^2 \rangle } K_2.
\end{array}
$$

We then realise $G$ as the fundamental group of a graph of groups with one edge (whose edge group is $\langle a_1^2 \rangle = \langle a_2^2 \rangle \cong \Z$) and two vertices (whose vertex groups are $K_1$ and $K_2$ respectively). Let $T$ be the corresponding Bass-Serre tree. Consider the following automorphisms of $G$. 

\begin{center}
\begin{tabular}{ccccc}
$
\begin{array}{ccc}
	& \Phi_1 &  \\
	a_1 & \mapsto & a_2 \\
	t_1 & \mapsto & t_2 \\
	a_2 & \mapsto & a_1 \\
	t_2 & \mapsto & t_1 \\
\end{array}
$
& 

&

$
\begin{array}{ccl}
	& \Phi_2 &  \\
	a_1 & \mapsto & a_1 \\
	t_1 & \mapsto & t_1^{-1} \\
	a_2 & \mapsto & a_2 \\
	t_2 & \mapsto & t_2 \\
\end{array}
$

 & 
 
&
 $
 \begin{array}{ccl}
 	& \Phi_3 &  \\
 	a_1 & \mapsto & a_1^{t_1} = a_1^{-1} \\
 	t_1 & \mapsto & t_1^{t_1} = t_1 \\
 	a_2 & \mapsto & a_2^{t_2} = a_2^{-1} \\
 	t_2 & \mapsto & t_2^{t_2} = t_2 \\
 \end{array}
 $
\end{tabular}

\begin{tabular}{ccc}

$
\begin{array}{ccl}
	& \Phi_4 &  \\
		a_1 & \mapsto & a_1^{a_1} = a_1 \\
	t_1 & \mapsto & t_1^{a_1} = t_1 a_1^2 \\
	a_2 & \mapsto & a_2\\
	t_2 & \mapsto & t_2 \\
\end{array}
$

& &

$
\begin{array}{ccl}
	& \Phi_5 &  \\
	a_1 & \mapsto & a_1^{a_1^2} = a_1 \\
t_1 & \mapsto & t_1^{a_1^2} = t_1 a_1^4 \\
a_2 & \mapsto & a_2\\
t_2 & \mapsto & t_2 \\
\end{array}
$
\end{tabular}
\end{center}

Each $\Phi_i$ is an element of $\Out^T(G)$ which is non-trivial in the $i^{th}$ term of the filtration. 

The automorphism $\Phi_1$ swaps $K_1$ and $K_2$, so inverts the edge in $T/G$ and is non-trivial in the first term of the filtration. $\Phi_2$ induces a non-trivial outer automorphism of $K_1$, so is non-trivial in the second term. $\Phi_3$ is inner (a trivial outer automorphism) on both $K_1$ and $K_2$ but induces a non-trivial outer automorphism of the edge group, so is a bi-Twist.  $\Phi_4$ is a Twist, as it is inner on vertex and edge groups, but the conjugation on $K_1$ cannot be realised as conjugation by an element of the centre of the edge group, and finally $\Phi_5$ is a Dehn Twist. 
\end{ex}

Our common strategy for the polynomially growing case is to construct a canonical tree -- possibly only truly canonical for a finite index subgroup -- as a tree of cylinders, and then use this theorem to analyse the automorphisms which preserve it.

By Lemma~\ref{lem:stabs_are_free_by_cyclic} the quotient graph for the action must be finite, and so the quotient at (1) will be finite in every case. The quotient at (2) contains the McCool groups, which are generally easier to analyse.

The following lemma relates the restricted product at (2) in Theorem~\ref{thm:BassJiang} to the McCool groups (see Definition~\ref{def:mccool_etc}) for the vertex groups with respect to their incident edge groups. It is analogous to part of Proposition~2.3 of~\cite{LevittAutosHyperbolic} which deals with the case where $\Out(G_e)$ is finite. 

\begin{lem}
\label{lem:quotient_by_McCool}
The product $\prod_{v \in V(\Gamma)} \MC(G_v;\{G_e\}_{\iota(e)=v})$ is a normal subgroup of the restricted product ${\prod_{v \in V(\Gamma)}}'  \Out(G_v;\{G_e\}_{\iota(e)=v})$. 

The quotient is isomorphic to a subgroup of $\prod_{e \in E(\Gamma)} A_e/\Ad(N_e)$, where $A_e$ is a subgroup of $\Aut(G_e)$, every element of which is induced by an automorphism of $G_v$. Further, $A_e=A_{\overline{e}}$ for all edge pairs $\{e,\overline{e}\}$.
\end{lem}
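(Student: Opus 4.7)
The plan is to construct a homomorphism $\Phi$ from the restricted product to $\prod_{e \in E(\Gamma)} A_e / \Ad(N_e)$ whose kernel will be exactly $\prod_v \MC(G_v; \{G_e\}_{\iota(e)=v})$. This simultaneously establishes normality of the McCool product (as the kernel of a homomorphism) and exhibits the quotient as a subgroup of the target.

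For each edge $e$ with $v = \iota(e)$, define $A_e \leq \Aut(G_e)$ to be the subgroup consisting of restrictions $\theta_v|_{G_e}$ for $\theta_v \in \Aut(G_v)$ preserving $G_e$ setwise. Then $\Ad(N_e) \leq A_e$, and $\Ad(N_e)$ is normal in $A_e$: if $\psi = \theta_v|_{G_e}$ and $n \in N_e$, direct computation gives $\psi \circ \Ad(n)|_{G_e} \circ \psi^{-1} = \Ad(\theta_v(n))|_{G_e}$, and $\theta_v(n) \in N_e$ since $\theta_v(G_e) = G_e$. Hence $A_e / \Ad(N_e)$ is a group.

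To define $\Phi$: given $(\Theta_v)$ in the restricted product and an edge $e$ with $v = \iota(e)$, $w = \tau(e)$, the compatibility diagram furnishes an automorphism $\psi_e \in \Aut(G_e)$ arising simultaneously as the restriction of representatives of both $\Theta_v$ and $\Theta_w$. This forces $\psi_e \in A_e \cap A_{\overline{e}}$ under the identifications via $\alpha_e$ and $\alpha_{\overline{e}}$, and symmetry of the construction yields $A_e = A_{\overline{e}}$. Setting $\Phi((\Theta_v)) = (\psi_e\,\Ad(N_e))_{e \in E(\Gamma)}$, the coset is well-defined because any two representatives of $\Theta_v$ preserving $G_e$ differ by conjugation by an element of $N_e$, and $\Phi$ is a homomorphism since restriction respects composition.

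The containment $\prod_v \MC(G_v; \{G_e\}) \subseteq \ker \Phi$ is immediate: a McCool representative restricts to the identity on every incident edge group, so each $\psi_e$ is trivial. The main obstacle is the reverse direction. Given $(\Theta_v) \in \ker \Phi$, for each vertex $v$ and each incident edge $e$ some representative of $\Theta_v$ restricts to the identity on $G_e$, and one must upgrade this edge-by-edge trivialisation to a single representative simultaneously trivialising all incident edges at $v$. Concretely, for each incident $e$ one obtains $n_e \in N_e$ with $\theta_v|_{G_e} = \Ad(n_e)|_{G_e}$, and the task becomes producing a single $g \in G_v$ lying in $n_e C_e$ for every incident $e$ (with $C_e = C_{G_v}(G_e)$). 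I would attempt this by induction over the incident edges, exploiting the graph-of-groups structure to show that these cosets are mutually compatible. Once $\ker \Phi = \prod_v \MC(G_v; \{G_e\}_{\iota(e)=v})$ is established, the induced injection embeds the quotient into $\prod_e A_e/\Ad(N_e)$, proving the lemma.
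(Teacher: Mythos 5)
Your map $\Phi$ is the same one the paper uses: restrict a representative of $\Theta_v$ preserving $G_e$ to get a well-defined class in $A_e/\Ad(N_e)$, assemble over all edges, and read off normality, the embedding of the quotient, and $A_e=A_{\overline{e}}$ from the compatibility squares. The gap is in your identification of the kernel. You declare the ``main obstacle'' to be upgrading, at each vertex, the edge-by-edge trivialisations to a \emph{single} representative of $\Theta_v$ restricting to the identity on all incident edge groups simultaneously, and you leave this as an unexecuted induction. No such step is needed: $\MC(G_v;\{G_e\}_{\iota(e)=v})$ is defined (in Section 2.1 of the paper, and this is the standard convention for McCool groups) by requiring, for \emph{each} subgroup $G_e$ separately, the existence of a representative restricting to the identity on that $G_e$; the representative is allowed to depend on $e$. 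With that definition, $\Theta_v$ lies in the kernel of the $e$-component of $\Phi$ if and only if $\theta_v|_{G_e}=\Ad(n_e)|_{G_e}$ for some $n_e\in N_e$, i.e.\ if and only if $\theta_v\Ad(n_e)^{-1}$ is a representative restricting to the identity on $G_e$ --- which is verbatim the membership condition for the McCool group. So $\ker\Phi=\prod_v\MC(G_v;\{G_e\}_{\iota(e)=v})$ immediately, and the lemma is proved.

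Had the simultaneous version been what is required, your sketch would not go through. The cosets $n_eC_e$ for distinct incident edges at $v$ need not meet, and the ambient graph-of-groups structure gives no leverage inside a single vertex group. For instance, take $G_v=F_3=\langle a,b,c\rangle$ with incident edge groups $\langle a\rangle$ and $\langle b\rangle$, and the outer class of $\theta\colon a\mapsto a$, $b\mapsto cbc^{-1}$, $c\mapsto c$: some representative is the identity on $\langle a\rangle$ and some other representative is the identity on $\langle b\rangle$, but a simultaneous one would require an element of $\langle a\rangle\cap c\langle b\rangle=\emptyset$. So the step you flag is either vacuous (under the paper's definition) or false in general (under your reading); in neither case is ``induction over the incident edges'' available. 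Apart from this, your argument is correct and coincides with the paper's.
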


\begin{proof}

For each edge at a vertex $v$ there is a map from $\Out(G_v;\{G_e\}_{\iota(e)=v})$ to \linebreak$\Aut(G_e)/\Ad(N_e)$ (note that this is a quotient of $\Out(G_e)$). Assembling them, we get a map to their product, and the kernel of this map consists of those elements induced by conjugations at every vertex; precisely the McCool group $\MC(G_v;\{G_e\}_{\iota(e)=v})$. 
The conditions on the initial restricted product amount to requiring that an element induces the same automorphisms on the stabiliser of an edge and its inverse: that is, the automorphisms $A_e$ of $G_e$ and $A_{\overline{e}}$ of $G_{\overline{e}}$ will be the same. (Though note that the quotient $\Aut(G_e)/\Ad(N_e)$ depends also on the vertex group, and so there is no reason to expect these will be the same for both an edge and its opposite.)
\end{proof}


Our strategy is to prove that the McCool groups are finitely generated, and that the quotient is too, usually by showing that this is true of every subgroup of this product. The details vary and appear in the relevant case.

In most of our cases, the edge groups are virtually abelian (that is, their free part has rank at most 1). In this case, we can understand the quotient at (3) as well.

\begin{prop}
	\label{prop:normalisers}
	Suppose $G$ is free by cyclic, and $H \leq G$ is free-by-cyclic and virtually abelian. Then $N_G(H)$ induces a finite subgroup of $\Out(H)$. (That is, $\Ad(G,H)/\Inn(H)$ is finite.)
\end{prop}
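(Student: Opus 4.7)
The plan is to split into cases by the isomorphism type of $H$. Since $H \cong F_k \rtimes \langle s \rangle$ is virtually abelian we must have $k \leq 1$, so $H$ is isomorphic to one of $\Z$, $\Z^2$, or the Klein bottle group $K$. For $H \cong \Z$ or $K$ the outer automorphism group is already finite (respectively $\Z/2$ and $(\Z/2)^2$), and since $\Ad(G,H)/\Inn(H)$ embeds in $\Out(H)$ the conclusion is immediate in those cases.

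For the substantive case $H \cong \Z^2$ (where $\Out(H) = \GL(2,\Z)$ is infinite), I would write $G = F_n \rtimes_\varphi \langle t \rangle$ and exploit that $H$ has rank $2$ while $G/F_n \cong \Z$ has rank $1$: this forces $H \cap F_n$ to be infinite cyclic. I would pick a generator $a \in F_n$ for it, a complementary generator $b = t^k w \in H$ with $w \in F_n$ and $k \neq 0$, and a root $a_0 \in F_n$ of $a$, so that $C_{F_n}(a) = \langle a_0 \rangle$.

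The heart of the argument is to show $F_n \cap N_G(H) = F_n \cap C_G(H) = \langle a_0 \rangle$. For the first inclusion, any $u \in F_n$ normalising $H$ normalises the characteristic subgroup $H \cap F_n = \langle a \rangle$; since no nontrivial element of a free group is conjugate to its inverse, such $u$ must centralise $a$ and hence lie in $\langle a_0 \rangle$. For the reverse inclusion, I need to check $[a_0,b] = 1$: unpacking $[a,b] = 1$ yields $waw^{-1} = \varphi^k(a)$, and writing $a = a_0^m$ this rearranges as $(wa_0w^{-1})^m = \varphi^k(a_0)^m$; uniqueness of $m$th roots in free groups then promotes this to $wa_0w^{-1} = \varphi^k(a_0)$, which is exactly $[a_0,b] = 1$.

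The final step is to project via $\pi\colon G \to G/F_n = \Z$. The kernels of $\pi$ on $N_G(H)$ and on $C_G(H)$ both equal $\langle a_0 \rangle$, so $N_G(H)/C_G(H) \cong \pi(N_G(H))/\pi(C_G(H))$. Since $H \leq C_G(H)$ and $\pi(H) = k\Z \neq 0$, $\pi(C_G(H))$ is a nontrivial subgroup of $\Z$, which makes the quotient finite cyclic. As $H$ is abelian, $\Inn(H) = 1$, so $\Ad(G,H)/\Inn(H) = N_G(H)/C_G(H)$ is finite. The most delicate moment will be extracting $[a_0,b]=1$ from $[a,b]=1$ by root uniqueness; everything else is bookkeeping with the projection to $\Z$.
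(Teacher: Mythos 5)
Your proof is correct and follows essentially the same route as the paper: reduce to the $H\cong\Z^2$ case, control the intersection with $F_n$ via the fact that the normaliser of a cyclic subgroup of a free group is the cyclic group on its root, and then control the image in $G/F_n\cong\Z$. The only difference is cosmetic — the paper bounds $[N_G(H):H]$ rather than $[N_G(H):C_G(H)]$, which lets it skip your root-uniqueness step showing $a_0\in C_G(H)$ (and note that $u$ normalises $H\cap F_n$ simply because $F_n$ is normal in $G$, not because that subgroup is characteristic in $H$).
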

\begin{proof}
	The hypotheses give us that $H$ must be trivial, $\Z$, $\Z^2$ or the fundamental group of a Klein bottle. In every case except for $\Z^2$, the outer automorphism group is finite and so there is nothing to prove. So suppose $H=\Z^2$. The kernel of the map to $\Out(H)$ contains $H$, so it is enough to show that $H$ is finite index in $N_G(H)$. $H \cap F_n$ must be infinite cyclic, and we have that $H\cap F_n \leq N_G (H) \cap F_n \leq N_{F_n}(H\cap F_n)$. (Recall that conjugating cannot change the exponent of the stable letter.) Since the leftmost group is finite index in the rightmost group, it is also finite index in the middle group.
	
	In the quotient, both $H$ and $N_G(H)$ have non-trivial image. So the image of $H$ is finite index in the image of $N_G(H)$. The index of $H$ in $N_G(H)$ is the product of these two indices, and is therefore finite too.
\end{proof}

To show that the quotient at (4) is finitely generated, we will show that the centralisers (and therefore any quotient of their product) are finitely generated. The splittings we define for the polynomial case all have edge and vertex groups with finitely generated free part, so we will use the following lemma.

\begin{lem}
\label{lem:centralisers_are_fg}
	Suppose $H \leq G$ are (finitely generated free)-by-cyclic. Then $C_G(H)$ is finitely generated.
\end{lem}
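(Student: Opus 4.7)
Since $G$ is (finitely generated free)-by-cyclic, write $G = F \rtimes_\varphi \langle t \rangle$ with $F$ a finitely generated free group, and let $\pi \colon G \to \Z$ denote the quotient by $F$. Restricting $\pi$ to $C_G(H)$ yields the short exact sequence
\[1 \longrightarrow C_F(H) \longrightarrow C_G(H) \longrightarrow \pi(C_G(H)) \longrightarrow 1.\]
The image $\pi(C_G(H)) \leq \Z$ is cyclic and hence finitely generated. Since any extension of a finitely generated group by a finitely generated group is again finitely generated, it therefore suffices to show that $C_F(H) = C_G(H) \cap F$ is finitely generated.

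I would split into two cases according to whether $H$ meets $F$ non-trivially. If some non-identity element $h$ lies in $H \cap F$, then $C_F(H) \leq C_F(h)$; centralisers of non-identity elements in a free group are cyclic, so $C_F(H)$ is cyclic, and in particular finitely generated.

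Otherwise $H \cap F = 1$, so $\pi|_H$ embeds $H$ into $\Z$, forcing $H$ to be trivial or infinite cyclic. The trivial case is immediate; otherwise $H = \langle s \rangle$ with $s = t^k w$ for some $k \neq 0$ and $w \in F$. An element $x \in F$ commutes with $s$ iff $x t^k w = t^k w x$, which using the defining relation $t^{-1} x t = x\varphi$ rearranges to say exactly that $x$ is fixed by a specific automorphism $\alpha$ of $F$ built from $\varphi^k$ and conjugation by $w$. By Theorem~\ref{Bestvina-Handel}, fixed subgroups of automorphisms of $F_n$ have rank at most $n$, so $C_F(H) = \fp{\alpha}$ is finitely generated.

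The only non-routine ingredient is the Bestvina--Handel bound on ranks of fixed subgroups of $F_n$, which is already in our toolkit; everything else is a short exact sequence chase together with standard facts about centralisers in free groups. I do not anticipate any serious obstacle beyond being careful with the case split, and in particular noting that when $H \cap F = 1$ the group $H$ is forced to be cyclic (so the apparent asymmetry between $H$ and its intrinsic free part is not an issue).
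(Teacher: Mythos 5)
Your proof is correct and follows essentially the same route as the paper's: a case split on $H\cap F$, using that centralisers of non-trivial elements of a free group are cyclic when $H\cap F\neq 1$, invoking Theorem~\ref{Bestvina-Handel} for the fixed subgroup when $H\cap F=1$, and accounting for the $\Z$-direction by one extra generator (which you package slightly more cleanly as a short exact sequence over a subgroup of $\Z$). The only cosmetic difference is that the paper distinguishes the rank $\geq 2$ and rank $1$ subcases of $H\cap F\neq 1$, which you correctly merge.
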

\begin{proof}
	Let $F_n$ be the ``free part'' of $G$, the kernel of the given map to $\Z$. If $H\cap F_n$ is rank at least two, then $C_G(H)\cap F_n$ is trivial, and so $C_G(H)$ is either trivial or $\Z$. If $H\cap F_n$ is $\Z$, then so is $C_G(H)\cap F_n$, and $C_G(H)$ may be $\Z$ or $\Z^2$. If $H\cap F_n$ is trivial, then $C_G(H)\cap F_n$ consists of those elements in $G\cap F_n$ which are fixed by conjugating by $H$. As the fixed subgroup of an automorphism of a free group, this is finitely generated (by Theorem~\ref{Bestvina-Handel}). The full centraliser has an additional generator which is a root of the generator of $H$.
\end{proof}

The centre of a free-by-cyclic group $G$ is isomorphic to $\Z^2$ if and only if $G$ is; $\Z$ if $G$ is virtually free-times-cyclic and not $\Z^2$, and trivial otherwise. So the group given at (5) is a finitely generated abelian group, as are all its quotients.

In the exponential case, $G$ is a one-ended relatively hyperbolic group. We are able to use previous work in the literature (\cite{GuirardelLevitt2015}) on canonical JSJ decompositions and the automorphisms that preserve them.

\section{Exponential growth}
\label{sec:exp_growth} 

\subsection{Relative Hyperbolicity}

In this section, we assume that $\varphi$ is exponentially growing. Then we have access to a very useful fact: the group $G \cong F_n \rtimes_\varphi \Z$ is relatively hyperbolic (see \cite{GauteroLustigRelHyp,GhoshRelHyp,DahmaniLiRelHyp}). Several definitions of relative hyperbolicity, together with proofs of their equivalence, can be found in \cite{HruskaRelHyp}, for instance; we do not include one here since we do not work directly with the definition. 

Given a free group outer automorphism $\Phi$, say a subgroup $P$ is \emph{polynomially growing} (for $\Phi$) if there is a power $m$ and a representative $\alpha$ of $\Phi^m$ so that $P\alpha = P$ and the restriction of $\alpha$ to $P$ is polynomially growing.

\begin{prop}[{\cite[Proposition 1.4]{LevittInequalities}}]
	Every non-trivial polynomially growing subgroup is contained in a unique maximal polynomially growing subgroup. Maximal polynomially growing subgroups have finite rank, are malnormal, and there are only finitely many conjugacy classes of them.
\end{prop}

These maximal polynomially growing subgroups are a key ingredient in the relatively hyperbolic structure of a free-by-cyclic group:

\begin{thm}[\cite{GauteroLustigRelHyp,GhoshRelHyp,DahmaniLiRelHyp}]
	\label{thm:rel_hyp}
	If $\varphi$ is an automorphism of $F_n$ with at least one exponentially growing element, the semidirect product $F_n \rtimes_\varphi \Z$ is relatively hyperbolic with respect to 
	subgroups of the form $H \rtimes_{\varphi^m\gamma} \Z$, where $H$ is a maximal polynomially growing subgroup, $m$ is the minimum (positive) power of $\varphi$ which carries it to a conjugate, and $\gamma$ is the inner automorphism so $\varphi^m\gamma$ preserves $H$.
\end{thm}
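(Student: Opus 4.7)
The theorem is stated as a citation of \cite{GauteroLustigRelHyp,GhoshRelHyp,DahmaniLiRelHyp}, so I would not attempt an independent proof; instead, I would sketch the common strategy underlying those three references. The plan is to build a geometric model for $G$, cone off the subcomplexes realising the polynomially growing mapping tori, and verify that the resulting space is Gromov-hyperbolic via a Bestvina--Feighn-type flaring criterion.

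First, I would appeal to the Bestvina--Feighn--Handel relative train track machinery to produce a topological representative $f\colon\Gamma\to\Gamma$ of some uniform power of $\varphi$ whose filtration $\emptyset = \Gamma_0 \subset \Gamma_1 \subset \cdots \subset \Gamma_k = \Gamma$ has each stratum either exponentially growing or polynomially growing. The key structural input, extracted from this filtration, is that the polynomially growing elements of $F_n$ lie in finitely many conjugacy classes of maximal polynomially growing subgroups $H_1,\dots,H_r$, and each $H_i$ is (after replacing $\varphi$ by a uniform power and absorbing an inner automorphism $\gamma$) invariant under $\varphi^m\gamma$. This lets me form the mapping tori $P_i = H_i\rtimes_{\varphi^m\gamma}\Z$ inside $G$, with the correct $m$ and $\gamma$ as in the statement.

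Second, I would take the mapping torus 2-complex $X$ of $f\colon\Gamma\to\Gamma$, so that $\pi_1(X)\cong G$, and locate inside $\widetilde X$ the equivariant collection of sub-mapping-tori corresponding to the strata $\Gamma_i$ that lie entirely in the polynomially growing part. Electrifying those subspaces (coning them off in the $G$-equivariant sense) produces a space $\widehat X$ on which $G$ acts properly by isometries with peripheral stabilisers conjugate to the $P_i$. To conclude relative hyperbolicity it then suffices, by the standard criterion, to show that $\widehat X$ is Gromov-hyperbolic and that the cone-points satisfy bounded coset penetration.

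The main obstacle, and the technical core of each of the three papers, is proving hyperbolicity of $\widehat X$. The natural route is via the flaring/annulus condition of Bestvina--Feighn: outside the coned-off polynomial regions, every element of $F_n$ has exponentially growing conjugacy length under iteration of $\varphi^{\pm 1}$, and one must promote this pointwise statement into a uniform flaring of bi-infinite annuli in $\widetilde X$. The subtlety is that exponential and polynomial strata interact along common vertices of $\Gamma$, so the naive estimate fails near these interfaces; the argument requires careful bookkeeping of cancellation in iterated images and (in \cite{DahmaniLiRelHyp}) an explicit relative isoperimetric inequality. Once flaring is verified the hyperbolic combination theorem delivers hyperbolicity of $\widehat X$, bounded coset penetration follows from the malnormality (up to finite intersection) of the $P_i$ in $G$, and one obtains precisely the peripheral structure claimed.
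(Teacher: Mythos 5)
The paper gives no proof of this statement---it is quoted directly as a result of Gautero--Lustig, Ghosh, and Dahmani--Li---and you correctly identify it as such. Your sketch of the strategy in those references (relative train tracks to identify the finitely many conjugacy classes of maximal polynomially growing subgroups, coning off the corresponding sub-mapping-tori, and verifying hyperbolicity of the electrified space via a flaring/combination-theorem argument) is a fair account of the cited literature and is consistent with how the paper uses the result.
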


(This collection is sometimes referred to as the ``mapping torus'' of the collection of maximal polynomially growing subgroups. For each $H$, that such an $m$ exists is guaranteed since there are only finitely many conjugacy classes of maximal polynomially growing subgroups, and since $\varphi^m$ sends $H$ to a conjugate, there is an inner automorphism so that the composition preserves $H$.) 

Recall that Lemma~\ref{lem:stabs_are_free_by_cyclic} gives that $F_n \rtimes \Z$ is one ended, and so it is one ended relative to any collection of subgroups.

Now we have access to a wide range of technology used in the study of relatively hyperbolic groups. In \cite{GuirardelLevitt2015} there is a careful examination of the subgroup $\Out(G;\mathcal{P})$ for relatively hyperbolic groups which are one ended relative to their parabolic subgroups, using JSJ theory and analysing the subgroup of automorphisms which preserves a splitting. We recall enough of their work to make the statements which follow self contained (although the proofs will not be).

There is a JSJ decomposition space over elementary (parabolic and virtually cyclic) subgroups relative to $\mathcal{P}$, which is invariant under $\Out(G;\mathcal{P})$. It contains a canonical JSJ tree, the tree of cylinders of the deformation space, which again is $\Out(G;\mathcal{P})$-invariant. There are four possibilities for vertex stabilisers: \begin{description}

\item[Maximal loxodromic] stabilised by a maximal virtually infinite cyclic group
\item[Maximal parabolic] stabilised by a maximal parabolic subgroup
\item[Rigid] non-elementary and elliptic in every splitting with elementary edge groups and where $\mathcal{P}$ are elliptic
\item[Flexible QH with finite fiber] none of the above, in which case they are ``quadratically hanging with finite fiber''
\end{description}

Since the groups we consider are torsion free, the maximal loxodromic subgroups are infinite cyclic and there can be no ``Flexible QH with finite fiber" vertex stabilisers. In general, these groups map with finite kernel onto an orbifold group, and the incident edge groups are virtually cyclic (and their images are in boundary subgroups). Since we are considering groups which are torsion free, the structure is actually much simpler here. First, the kernel must be trivial, so the group itself is an obifold group. By \cite[Lemma 2.4]{Dahmani2016} this is (virtually) free and hence infinitely ended, and therefore cannot occur as a vertex group with the required (virtually) cyclic incident edge groups by Lemma~\ref{lem:stabs_are_free_by_cyclic}. 

The tree is bipartite: one class of vertices is those stabilised by a maximal elementary group, and the other is the rigid vertices. Edge groups are maximal elementary subgroups of the rigid vertex group they embed in.

As Guirardel and Levitt point out, Lemma 3.2 of \cite{MinasyanOsin2012} tells us that when the groups in $\mathcal{P}$ are not themselves relatively hyperbolic, every automorphism permutes the conjugacy classes of the $P_i$. This is true in our case.
Theorem~\ref{thm:rel_hyp_split} below concerns $\Out(G;\mathcal{P})$; since this consists of those (outer) automorphisms which \emph{preserve} each of these conjugacy classes, it is a finite index subgroup of $\Out(G)$. 

Before we state the theorem, we define the group of twists, a subgroup of automorphisms of $G$. (See Section 2 of \cite{LevittAutosHyperbolic} or Subsection 2.6 of \cite{GuirardelLevitt2015}.)

\begin{defn}
	Let $e$ be an edge of a graph of groups, and $g$ an element of $C_e$ (the centraliser of $G_e$ in $G_{\iota(e)}$). Define the \emph{twist by $g$ around $e$} to be the automorphism that: \begin{itemize}
		\item if $e$ is separating, so $G=A\ast_{G_e}B$, conjugates $A$ by $g$ and fixes $B$ (with $B$ corresponds to the factor containing $G_{\iota(e)}$);
		\item if $e$ is non-separating, so $G=A\ast_{G_e}$, fixes $A$ and sends the stable letter $t$ to $tg$.
	\end{itemize}
	The \emph{group of twists}, $\mathcal{T}$, is the group generated by all twists.
\end{defn}

The group of twists $\mathcal{T}$ is a quotient of the direct product of all $C_e$, the centralisers of edge groups in adjacent vertex groups. Also, recall that McCool groups are defined in Definition~\ref{def:mccool_etc}. 

\begin{thm}[{\cite[Theorem 4.3]{GuirardelLevitt2015}}]
\label{thm:rel_hyp_split}
Let $G$ be hyperbolic relative to $\mathcal{P} = \{P_1,\dots,P_n\}$, with $P_i$ infinite and finitely generated, and assume that $G$ is one-ended relative to $\mathcal{P}$. Then there is a finite index subgroup $\Out^1(G;\mathcal{P})$ of $\Out(G,\mathcal{P})$ which fits into the exact sequence \[
1 \to \mathcal{T} \to \Out^1(G;\mathcal{P})\to \prod_{i=1}^p\MCG^0_{T_{can}}(\Sigma_i) \times \prod_j \MC(P_j;\Inc(P_j)) \to 1 \]
where $T_{can}$ is the canonical JSJ decomposition relative to $\mathcal{P}$, $\mathcal{T}$ is its group of twists; $\MCG^0_{T_{can}}(\Sigma_i)$ relate to flexible vertex groups; and $\MC(P_j;\Inc(P_j))$ is the McCool group of $P_j$ with respect to the incident edge groups. (The product is taken only over those parabolic subgroups which appear as vertex stabilisers in $T_{can}$)
\end{thm}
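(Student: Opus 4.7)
The natural strategy is to apply a Bass--Jiang style filtration (Theorem~\ref{thm:BassJiang}) to the canonical JSJ tree $T_{can}$, and then identify the contributions from each of the four possible types of vertex. The first step is to pass to a finite index subgroup to obtain an honest action on $T_{can}$. By Minasyan--Osin (Lemma 3.2 of \cite{MinasyanOsin2012}), every element of $\Out(G)$ permutes the conjugacy classes of the $P_i$, so $\Out(G;\mathcal{P})$ has finite index in $\Out(G)$. Since the canonical JSJ tree is invariant under this group, and since the quotient graph is finite, after passing to the finite index subgroup $\Out^1(G;\mathcal{P})$ that acts trivially on the quotient graph and preserves its orientation, we obtain an action on $T_{can}$ where we may apply the Bass--Jiang machinery.

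The next step is to identify the vertex-wise contributions, i.e. the factors of the restricted product in (2) of Theorem~\ref{thm:BassJiang}. There are four types of vertex to analyse. Maximal loxodromic vertices have infinite cyclic stabilisers; $\Out$ of such a group has order two and is killed once we fix an orientation, so these contribute nothing. For rigid vertices, the defining property is that they are elliptic in every elementary splitting relative to $\mathcal{P}$; a theorem of Guirardel--Levitt (proved in their JSJ series and used throughout \cite{GuirardelLevitt2015}) then says every outer automorphism of a rigid vertex group preserving the conjugacy classes of incident edge groups is induced by an inner automorphism of $G$, so rigid vertices again contribute nothing new. Flexible vertices with finite fiber are handled via orbifold theory: the relevant relative outer automorphism group is (up to finite index) the pure mapping class group $\MCG^0(\Sigma_i)$ of the underlying orbifold preserving the boundary structure, which is precisely the $\MCG^0_{T_{can}}(\Sigma_i)$ factor. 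Maximal parabolic vertex groups contribute exactly $\MC(P_j;\mathrm{Inc}(P_j))$ since the incident edge groups are the distinguished subgroups to be fixed; no further constraint arises because these edge groups are already maximal elementary in the rigid neighbour.

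The remaining piece is the kernel of the restriction map to the product over vertex groups, which is by definition the group of twists $\mathcal{T}$. One must check that the ``restricted'' condition in (2) of Theorem~\ref{thm:BassJiang} is automatic here: the edge groups are maximal elementary in the rigid vertices, and for the parabolic and loxodromic vertex types the edge groups are central (or nearly so) in the relevant normalisers, so the induced automorphism of an edge group determined by the parabolic/loxodromic side can always be matched on the rigid side. This gives that the quotient by $\mathcal{T}$ is the full (unrestricted) product of the four types of factor above, which collapses to the two non-trivial factors in the statement.

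The main obstacle is the rigid vertex step: showing that outer automorphisms preserving the conjugacy classes of all incident edge groups are inner is the technical heart of a JSJ-based argument and relies on rigidity in the precise JSJ-theoretic sense, together with an argument that an outer automorphism fixing enough conjugacy classes of a one-ended group with no further elementary splittings must be inner. The orbifold identification for flexible QH vertices is also non-trivial and uses Dehn--Nielsen--Baer type theorems adapted to orbifolds with boundary, although in the torsion-free relatively hyperbolic setting considered here this is standard.
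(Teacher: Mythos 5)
This statement is not proved in the paper: it is quoted as Theorem 4.3 of \cite{GuirardelLevitt2015}, and the authors offer only a parenthetical indication of how it is derived there (from Levitt's analysis in \cite{LevittAutosHyperbolic}, ``together with some analysis of the bitwists, showing that they are all twists, and extended McCool groups''). Your sketch reconstructs the broad shape of Guirardel--Levitt's argument -- the canonical JSJ tree plus a vertex-by-vertex analysis in the spirit of Theorem~\ref{thm:BassJiang} -- but two of the steps you rely on are wrong as stated, and they are precisely where the real content of the theorem lives.

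First, the rigid vertex step. There is no theorem asserting that an outer automorphism of a rigid vertex group preserving the conjugacy classes of incident edge groups is induced by an inner automorphism of $G$. The correct input is that for a rigid vertex $v$ the group $\Out(G_v;\Inc(G_v))$ (relative also to the traces of $\mathcal{P}$) is \emph{finite}; this is proved by a Bestvina--Paulin limiting argument together with Rips theory, since an infinite group of such automorphisms would yield an action on an $\R$-tree and hence an elementary splitting in which $G_v$ is not elliptic, contradicting rigidity. As a consequence, one must pass to a \emph{further} finite index subgroup -- beyond the one acting trivially on the quotient graph -- to kill the rigid contributions; the $\Out^1$ you define does not accomplish this. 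Second, the kernel of the map to the product of vertex contributions is not ``by definition'' the group of twists: in the Bass--Jiang filtration it is the group of bitwists $\mathcal{T}^+$ (together with the lower terms of the filtration), which contains $\mathcal{T}$ but is a priori larger. Showing that in this relatively hyperbolic setting every bitwist is a twist is a genuine step, and it is exactly the step the paper flags in its parenthetical remark. Your claim that the restricted-product compatibility condition is ``automatic'' is where this work is hidden, and it is not automatic wherever the normaliser of an edge group in an adjacent vertex group properly contains its centraliser. Relatedly, passing from $\Out(P_j;\Inc(P_j))$ to the McCool group $\MC(P_j;\Inc(P_j))$ in the quotient requires controlling the automorphisms of edge groups that can be induced (the ``extended McCool groups''), which your sketch does not address.
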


%
%

(Theorem~\ref{thm:rel_hyp_split} is derived from Levitt's discussion in \cite{LevittAutosHyperbolic}, together with some analysis of the bitwists,  showing that they are all twists, and extended McCool groups that can appear, to deduce that there is a finite index subgroup fitting into this short exact sequence. Compared to the Bass-Jiang approach, they show that the second normal subgroup is just $\mathcal{T}$ and that the first quotient has a finite index subgroup isomorphic to right hand term above.

We also note that the subgroup $\mathcal{T}$ is the group of `Twists', as appears in term (4) of the Bass-Jiang filtration, Theorem~\ref{thm:BassJiang} - see Example~\ref{terms} for an explanation and example of this.)

In our case there are no flexible vertex groups, so that term does not appear.
We will use this theorem to prove finite generation for $\Out^1(G;\mathcal{P})$, which will give us finite generation of $\Out(G)$. This will follow from showing that the group of twists and the McCool groups which can appear are finitely generated.

Levitt in \cite{LevittInequalities} provides several inequalities relating invariants of an outer automorphism. Theorem 4.1 of that paper concerns the ranks of conjugacy classes of maximal polynomially growing subgroups for an automorphism of $F_n$ and gives that it is at most $n-1$ when the automorphism is exponentially growing (since there is at least one exponentially growing stratum in this case).

\begin{prop}
\label{prop:twists_for_exponential}
The group of twists is finitely generated.
\end{prop}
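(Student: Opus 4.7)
The plan is to exploit the observation stated immediately before the proposition: $\mathcal{T}$ is a quotient of the direct product $\prod_e C_e$, where $e$ ranges over the edges of the quotient graph $\Gamma$ of the canonical JSJ tree and $C_e = C_{G_{\iota(e)}}(G_e)$. Finite generation of $\mathcal{T}$ will therefore follow from finite generation of $\prod_e C_e$, which in turn reduces to showing that $\Gamma$ is finite and that each $C_e$ is finitely generated.

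Finiteness of $\Gamma$ is immediate from Lemma~\ref{lem:stabs_are_free_by_cyclic}(1): $F_n$ already acts minimally on the JSJ tree with finite quotient, and the $G$-quotient is a further quotient of this. For each edge $e$, I intend to apply Lemma~\ref{lem:centralisers_are_fg} to the inclusion $G_e \leq G_{\iota(e)}$. By Lemma~\ref{lem:stabs_are_free_by_cyclic}(2) both are free-by-cyclic, so the real content is checking that their free parts are finitely generated. For edge groups, the Guirardel--Levitt description gives that each edge stabiliser is either infinite cyclic or embeds into a parabolic $P_i$; each $P_i$ is the mapping torus of a maximal polynomially growing subgroup of $F_n$, and such subgroups have rank at most $n-1$ by Theorem~4.1 of \cite{LevittInequalities}, so the free part of $G_e$ is finitely generated. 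For vertex groups, the maximal elementary vertex stabilisers are either infinite cyclic or equal to some $P_i$ (both finitely generated free-by-cyclic with finitely generated free part), while the rigid vertex stabilisers are finitely generated by the general JSJ theory of \cite{GuirardelLevitt2015}; combined with the one-endedness of $G$ and the remark following Lemma~\ref{lem:stabs_are_free_by_cyclic}(4), this forces the free parts of the vertex groups to be finitely generated as well.

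With these inputs, Lemma~\ref{lem:centralisers_are_fg} yields that each $C_e$ is finitely generated. Since $\Gamma$ has only finitely many edges, $\prod_e C_e$ is a finite product of finitely generated groups, and therefore $\mathcal{T}$ is finitely generated as a quotient. The main technical obstacle in this plan is the verification that the rigid vertex stabilisers have finitely generated free part: the finite generation of rigid vertex groups themselves is a standard consequence of the JSJ machinery, but controlling their free part requires combining one-endedness of $G$ with the structural remarks on free-by-cyclic stabilisers in Lemma~\ref{lem:stabs_are_free_by_cyclic}. Once that is in place, the rest is bookkeeping.
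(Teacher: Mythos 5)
Your opening reduction (finitely many edges, so it suffices to show each $C_e = C_{G_{\iota(e)}}(G_e)$ is finitely generated) matches the paper, but the route you take from there has a genuine gap. You want to invoke Lemma~\ref{lem:centralisers_are_fg}, whose hypothesis is that both the edge group and the vertex group are \emph{(finitely generated free)}-by-cyclic. The paper explicitly flags that this hypothesis is not available here: the free parts $H = G_e \cap F_n$ and $F = G_{\iota(e)} \cap F_n$ ``are not necessarily finitely generated'' for the JSJ tree. Your attempted repair does not close this. For edge groups, you argue that a parabolic edge stabiliser embeds in some $P_i$ whose free part has rank at most $n-1$, and conclude its free part is finitely generated; but a subgroup of a finitely generated free group need not be finitely generated, and a parabolic edge group is only an intersection $P_i^g \cap R$ with a rigid vertex group, not all of $P_i^g$. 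Without finite generation of the free parts of the edge groups, the remark following Lemma~\ref{lem:stabs_are_free_by_cyclic} gives you nothing about the free parts of the rigid vertex groups, so the ``main technical obstacle'' you identify is left unresolved rather than overcome.

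The paper avoids this issue entirely by a case analysis on the rank of $H = G_e \cap F_n$ that never requires $H$ or $F$ to be finitely generated. If $H$ has rank at least $2$, its centraliser in the free group $F$ is trivial, so $C_e$ is at most $\Z$; if $H$ has rank $1$, the centraliser in $F$ is cyclic and $C_e$ is $\Z$ or $\Z^2$; if $H$ is trivial, one computes the centraliser of the cyclic generator $t^\ell h$ directly: at rigid vertices maximality of the elementary edge group forces it to be cyclic, and at maximal parabolic vertices the free part of the centraliser is a fixed subgroup of an automorphism of the (genuinely finitely generated) maximal polynomially growing subgroup, hence finitely generated by Theorem~\ref{Bestvina-Handel}. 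If you want to salvage your approach you would need to either prove finite generation of the free parts of all edge and rigid vertex groups (which is not established anywhere in the paper and is exactly what the authors sidestep), or replace the appeal to Lemma~\ref{lem:centralisers_are_fg} with an argument of the above type.
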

\begin{proof}
The group of twists is a quotient of the direct product of the centralisers of the edge groups in the vertex groups so it is enough to show that all of these are finitely generated. The edge and vertex groups have the structure of a free-by-cyclic group: say the vertex group is $V=F \rtimes \langle t^kg \rangle$, and the edge group is $E = H \rtimes \langle t^\ell h \rangle$, where $H = E \cap F$. (Note that $H$ and $F$ are not necessarily finitely generated but are subgroups of the defining free group, which is.) If $H$ has rank at least two, then its centraliser in $F$ is trivial, and so the centraliser of $E$ in $V$ is at most infinite cyclic. If $H$ is infinite cyclic, then so is its centraliser in $F$; then the whole centraliser is either $\Z$ or $\Z^2$.

The final case is where $H$ is trivial, so we are interested only in the centraliser of $t^\ell h$. Again, it will be sufficient to show that the centraliser in $F$ is finitely generated, since there is at most one more generator contributed from the ``cyclic part'' to the full centraliser. The argument is different at rigid and maximal elementary vertex groups.

First consider rigid vertex groups. Since conjugating by $t^\ell h$ induces the automorphism $\varphi^\ell \Ad(h)$, any $w$ in $F$ that commutes with $t^\ell h$ is fixed by $\varphi^\ell \Ad(h)$. In particular, it is polynomially growing for the outer automorphism $\Phi$. This implies that $\langle w, t^\ell h \rangle$ is an elementary subgroup. Since edge groups are maximal elementary in rigid vertex groups, this cannot happen and so there is no such $w$. (For the same reason, there is no root of $t^\ell h$.) 

At maximal elementary vertices, the free part of the centraliser is the fixed subgroup for the automorphism of $F$ induced by conjugating by $t^\ell h$ (again, conjugation induces the automorphism $\varphi^\ell \Ad(h)$, so any element of $F$ that commutes is fixed by this automorphism). Since $F$ is finitely generated (as a maximal polynomially growing subgroup), so is this fixed subgroup (in fact the rank is bounded by the rank of $F$; Theorem~\ref{Bestvina-Handel}). 
\end{proof}

Thus far what we have said is true for any finitely generated free group; but we do not (yet) have the tools to understand McCool groups of free-by-cyclic groups in general. So we specialise to $F_3$, for the sake of Theorem~\ref{thm:rank3}.

In this case, the bounds on polynomially growing subgroups mean they can have rank at most 2. Here we can analyse the McCool groups, since there is a good classification of the outer automorphism groups for rank 2 in \cite{Rank2Case}, and rank 1 is fairly easy to understand.

\begin{prop}
\label{prop:McCool_for_exponential}
Suppose $G=F_n \rtimes \Z$, with $n=1,2$. Let $\mathcal{H}$ be a finite collection of finitely generated subgroups of $G$. Then $\MC(G;\mathcal{H})$ is finitely generated.
\end{prop}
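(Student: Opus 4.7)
\emph{Proof plan.} The plan is to split the argument by $n$.

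\emph{The case $n=1$:} Here $G$ is either $\Z^2$ or the fundamental group of the Klein bottle. The Klein bottle group has $|\Out(G)|=4$, so $\MC(G;\mathcal{H})$ is finite. For $G=\Z^2$, $\Inn(G)$ is trivial, and hence $\MC(G;\mathcal{H})$ equals the pointwise stabiliser in $\Aut(\Z^2)=\GL(2,\Z)$ of the subgroup $H_0=\sum_{H\in\mathcal{H}}H\leq\Z^2$. After a suitable change of basis, this stabiliser is $\GL(2,\Z)$, a group isomorphic to $\Z\rtimes\Z/2$, or trivial, according as $H_0$ has rank $0$, $1$, or $2$---all of these are finitely generated.

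\emph{The case $n=2$:} The plan here is to leverage the detailed classification of $\Out(G)$ for rank-$2$ free-by-cyclic groups carried out in \cite{Rank2Case}, which describes $\Out(G)$ up to finite index for every defining automorphism $\phi$ and establishes finite generation throughout. My strategy would be: (i) pass to a suitable finite index subgroup $\Out^0(G)\leq\Out(G)$ whose structure is explicit from \cite{Rank2Case}; (ii) set $M_0=\MC(G;\mathcal{H})\cap\Out^0(G)$, which is a finite index subgroup of $\MC(G;\mathcal{H})$; and (iii) verify case by case that $M_0$ is finitely generated, from which finite generation of $\MC(G;\mathcal{H})$ follows. In most regimes of the classification $\Out^0(G)$ is virtually abelian, so all subgroups are finitely generated automatically. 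In the cases where a $\GL(2,\Z)$-factor appears (most notably when $G$ is virtually $F_2\times\Z$), the McCool condition forces pointwise fixing of a finitely generated subgroup of an abelian summand, reducing the analysis to the $n=1$ computation above.

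I expect the main obstacle to be the periodic case, in which $\phi$ has finite order and $\Out(G)$ has the richest structure. My approach there would be to pick a splitting of $G$ (for instance coming from the action on a suitable deformation space) in which every $H\in\mathcal{H}$ is elliptic, apply the Bass--Jiang filtration of Theorem~\ref{thm:BassJiang}, and check finite generation of each filtration quotient: the level (2) factor would be controlled by Lemma~\ref{lem:quotient_by_McCool} together with the Bestvina--Handel rank bound (Theorem~\ref{Bestvina-Handel}) applied inside each vertex group, the level (4) factor by Lemma~\ref{lem:centralisers_are_fg}, and the remaining levels are automatically finite or finitely generated abelian.
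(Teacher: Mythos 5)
Your rank--1 analysis is correct and matches the paper's, and your framing of rank 2 (use the classification of \cite{Rank2Case}, dispose of the finite/virtually cyclic/virtually abelian cases for free, and isolate the cases where a $\GL(2,\Z)$ factor appears, namely $F_2\times\Z$ and $F_2\rtimes_{-I_2}\Z$) is also the paper's route. The gap is in how you propose to handle that $\GL(2,\Z)$ factor. In $\Out(F_2\times\Z)\cong(\Z^2\rtimes C_2)\rtimes\GL_2(\Z)$ the $\GL_2(\Z)$ is $\Out(F_2)$ acting on the \emph{nonabelian} free part, and the condition imposed on it by $\MC(G;\mathcal{H})$ is the free-group McCool condition: it must fix, up to conjugacy in $F_2$, the projections of the subgroups in $\mathcal{H}$ to the free part. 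This does not reduce to pointwise fixing of a subgroup of an abelian summand, because conjugacy classes in $F_2$ are not determined by their images in $H_1(F_2)$ (every commutator abelianises to $0$, for instance). The subgroup of $\GL_2(\Z)$ your reduction produces therefore only \emph{contains} the group you need, and finite generation of an overgroup says nothing about a subgroup. The missing ingredient is McCool's theorem \cite{McCool1975} that stabilisers in $\Aut(F_n)$ of finite sets of conjugacy classes of elements are finitely generated (indeed finitely presented), together with \cite{GuirardelLevittMcCoolGroups} to replace finitely generated subgroups by finite sets of elements; this is exactly what the paper invokes. Your linear-algebra computation is the right tool only for the normal $\Z^2$ (transvection) factor, which records $t$-exponents, and that part of your argument is fine.

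Your fallback plan for the ``periodic case'' is also not viable as stated. The two problematic groups above \emph{are} the periodic ones, so this is not a residual case but the heart of the matter; an arbitrary finite collection of finitely generated subgroups of $G$ need not be simultaneously elliptic in any splitting (they may be of finite index in $G$, or hyperbolic in every candidate tree); and level (2) of the Bass--Jiang filtration in Theorem~\ref{thm:BassJiang} would return you to McCool groups of vertex groups, which is circular for the proposition you are trying to prove. The direct appeal to \cite{McCool1975} closes the argument without any of this machinery.
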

\begin{proof}
In rank 1 the outer automorphism groups are $\GL_2(\Z)$ or finite, and in both cases this subgroup must be finitely generated. (For $\Z^2$ notice that elements are their own conjugacy classes, and if $g$ is fixed, so is its root, and so after changing basis the only matrices in the subgroup are triangular, and so it is virtually cyclic.)

In rank 2, we refer to \cite[Theorem~1.1]{Rank2Case} for their outer automorphism groups. Most cases are either finite or virtually cyclic: so any subgroup is finitely generated. The remaining cases are $G= F_2 \times \Z$, and $G = F_2 \rtimes_{-I_2} \Z$.

In the first of these, we have that $\Out(G)=(\Z^2 \rtimes C_2) \rtimes \GL_2(\Z)$ \cite[Theorem 1.1(i)]{Rank2Case}. Since $\GL_2(\Z)$ preserves each of the first two factors, we may pass to a finite index subgroup that is $\Z^2 \rtimes \GL_2(\Z)$. (An element $u \in \Z^2$ acts by sending $t^kg \to t^{k+u\cdot g_ab}g$, and $\GL_2(\Z)$ on the free part as you might expect.) Now consider a set of finitely generated subgroups $\mathcal{H}$. 

Since $t$ is central, its exponent cannot be changed by inner automorphisms. So any element of the McCool group must fix the $t$-exponent in each generator: this will give a subgroup of $\Z^2$ (orthogonal to the abelianised free parts of the generators) which is therefore finitely generated. So our McCool group is finitely generated if and only if its intersection with $\Out(F_n)$ is. In fact, this intersection is exactly the McCool group for the free part: since $t$ is central, it cannot identify any conjugacy classes of $F_n$. These are finitely generated by \cite{McCool1975}, which completes the proof. Note that McCool proves the result for elements; however in the free group case and more generally for toral relatively hyperbolic groups~\cite[Corollary 1.6]{GuirardelLevittMcCoolGroups} the McCool group for a finite set of subgroups is equal to the McCool group for some finite set of elements. 


For $F_2 \rtimes_{-I_2} \Z$, the outer automorphism group is $\PGL_2(\Z) \times C_2$ \cite[Theorem 1.1(ii)]{Rank2Case}. Again, we can just consider the finite index subgroup $\PGL_2(\Z)$, which only acts on the free part. We can consider the McCool group for the free group (as a subgroup of $\GL_2(\Z)$). Its image in $\PGL_2(\Z)$ is a finite index subgroup of the subgroup we want, which is therefore finitely generated. 
\end{proof}

We now summarise this case in a theorem.

\begin{thm}
\label{thm:exponential}
Suppose $G \cong F_3 \rtimes_\varphi \Z$, and $\varphi$ is exponentially growing. Then $\Out(G)$ is finitely generated.
\end{thm}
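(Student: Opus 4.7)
The strategy is to assemble the pieces already developed in the excerpt and feed them into the Guirardel--Levitt exact sequence of Theorem~\ref{thm:rel_hyp_split}. Since $\varphi$ is exponentially growing, Theorem~\ref{thm:rel_hyp} makes $G$ relatively hyperbolic with respect to the collection $\mathcal{P}=\{P_1,\dots,P_k\}$ of mapping tori of maximal polynomially growing subgroups of $F_3$. By Lemma~\ref{lem:stabs_are_free_by_cyclic}, $G$ is one-ended, hence one-ended relative to $\mathcal{P}$. Each $P_i$ is itself a (finitely generated free)-by-cyclic group whose defining automorphism is polynomially growing, and in particular is not relatively hyperbolic; so by the Minasyan--Osin result cited after Theorem~\ref{thm:rel_hyp}, every automorphism of $G$ permutes the conjugacy classes of the $P_i$. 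This makes $\Out(G;\mathcal{P})$ finite index in $\Out(G)$, and so it suffices to show that $\Out(G;\mathcal{P})$ is finitely generated.

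Next I would invoke Theorem~\ref{thm:rel_hyp_split} to obtain a further finite index subgroup $\Out^1(G;\mathcal{P})$ that fits into the exact sequence
\[
1\to \mathcal{T}\to \Out^1(G;\mathcal{P})\to \prod_i \MCG^0_{T_{can}}(\Sigma_i)\times \prod_j \MC(P_j;\Inc(P_j))\to 1.
\]
As observed after the statement of that theorem, since $G$ is torsion free the flexible QH-with-finite-fibre vertices cannot occur in the canonical JSJ tree (they would violate Lemma~\ref{lem:stabs_are_free_by_cyclic}), so the orbifold factors $\MCG^0_{T_{can}}(\Sigma_i)$ are absent. Thus we only need to show that the group of twists $\mathcal{T}$ and each McCool factor $\MC(P_j;\Inc(P_j))$ is finitely generated.

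For the group of twists, Proposition~\ref{prop:twists_for_exponential} applies verbatim (it was stated for arbitrary rank) and gives that $\mathcal{T}$ is finitely generated. For the McCool factors, I use Levitt's inequality (Theorem 4.1 of \cite{LevittInequalities}) referenced just before Proposition~\ref{prop:twists_for_exponential}: the maximal polynomially growing subgroups of $F_3$ have rank at most $n-1=2$, since at least one exponential stratum is present. Hence each parabolic $P_j$ is of the form $F_m\rtimes\Z$ with $m\in\{0,1,2\}$, and the incident edge groups are finitely generated (they are elementary subgroups, hence virtually abelian of rank at most two). Proposition~\ref{prop:McCool_for_exponential} then yields that $\MC(P_j;\Inc(P_j))$ is finitely generated for each $j$.

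Combining these two ingredients, both the kernel $\mathcal{T}$ and the quotient in the exact sequence are finitely generated, so $\Out^1(G;\mathcal{P})$ is finitely generated. Since $\Out^1(G;\mathcal{P})$ is finite index in $\Out(G;\mathcal{P})$, which in turn is finite index in $\Out(G)$, standard group theory (a group with a finitely generated finite index subgroup is finitely generated) gives that $\Out(G)$ is finitely generated. The main conceptual obstacle is already handled before this theorem: producing Proposition~\ref{prop:McCool_for_exponential}, which relies on the explicit rank-2 classification of \cite{Rank2Case} together with McCool's theorem. Without that classification one would have no leverage on the McCool factors, so the whole argument is really a coordination between the JSJ machinery and the low-rank structure theorem.
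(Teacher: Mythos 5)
Your proposal is correct and follows essentially the same route as the paper: it combines Theorem~\ref{thm:rel_hyp_split} with Propositions~\ref{prop:twists_for_exponential} and~\ref{prop:McCool_for_exponential}, which is exactly the paper's argument, merely spelled out in more detail (the finite-index reductions, the absence of QH vertices, and the rank bound on the parabolics that makes Proposition~\ref{prop:McCool_for_exponential} applicable).
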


\begin{proof}
Use the canonical tree and the analysis of the outer automorphisms derived from it in Theorem~\ref{thm:rel_hyp_split}. Propositions~\ref{prop:twists_for_exponential} and~\ref{prop:McCool_for_exponential} show that the outside groups in the short exact sequence are finitely generated, and therefore so is $\Out_1(G;\mathcal{P})$ which is a finite index subgroup of $\Out(G)$.
\end{proof}

\section{Linear growth}

\subsection{Strategy}Our strategy for showing that the automorphism group of a free-by-cyclic group, in the case of linear growth, is as follows. 

\begin{itemize}
	\item Start with a free-by-cyclic group, $G= F_n \rtimes_{\Phi} \Z$, where $\Phi$ has linear growth,
	\item Consider a finite index subgroup, $G_0=F_n \rtimes_{\Phi^r} \Z$, so that $\Phi^r$ is UPG, and hence a Dehn Twist
	\item Use the parabolic orbits Theorem to find a tree whose deformation space is invariant,  
	\item Deduce that the tree of cylinders, $T=T_c$, of this space is $G_0$-canonical, 
	\item Use Proposition~\ref{prop:no_really_theres_an_action} to deduce that $T$ is nearly $G$-canonical
	\item Show that $\Out^{T}(G)$ is finitely generated if certain McCool groups for free-by-finite groups are
	\item Carry out the calculation of the relevant McCool groups, to conclude that $\Out^{T}(G)$ is finitely generated.
\end{itemize}

\subsection{Constructing a tree}
First we record a useful lemma on normalisers in free-by-cyclic groups.
 
\begin{lem}
	\label{lem:small_normaliser_Z_2}
	Suppose $F_n \rtimes \langle s \rangle$ is a free-by-cyclic group, and $w \in F_n$ is not a proper power and commutes with $s$. Then $\langle w,s \rangle$ is its own normaliser.
\end{lem}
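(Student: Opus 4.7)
The plan is to take any $g \in G$ normalising $H = \langle w, s \rangle$ and show $g \in H$. Writing $g = s^k u$ with $u \in F_n$ in the semidirect product decomposition, I would first use that $s$ commutes with $w$ to simplify
\[ g^{-1} w g = u^{-1} s^{-k} w s^k u = u^{-1} w u. \]
This element lies in $F_n$, and since $H$ is abelian with $s \notin F_n$, we have $H \cap F_n = \langle w \rangle$. Hence $u^{-1} w u = w^m$ for some integer $m$. A standard cyclic-length argument in the free group $F_n$ forces $|m| = 1$, as conjugation preserves cyclic length but $w^m$ has cyclic length $|m|\cdot \|w\|$.

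If $m = 1$, the argument finishes quickly: $u$ centralises $w$ in $F_n$, and since $w$ is not a proper power, $C_{F_n}(w) = \langle w \rangle$, so $u = w^j$ for some $j$, giving $g = s^k w^j \in H$. One then checks that such $g$ really does normalise $H$ and not merely centralise $w$, by verifying $g^{-1} s g \in H$; since $s$ commutes with $w^j$ this conjugate is just $s$, so there is nothing more to prove.

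The main obstacle is ruling out $m = -1$: that $w$ is not conjugate to its inverse in a nontrivial free group. I plan to handle this directly: if $u^{-1} w u = w^{-1}$, then $u^2$ commutes with $w$, so $u^2 \in C_{F_n}(w) = \langle w \rangle$. Writing $u^2 = w^a$ and conjugating both sides by $u$ yields $w^a = w^{-a}$, which forces $a = 0$; then $u^2 = 1$, so $u = 1$ by torsion-freeness of $F_n$, giving $w = w^{-1}$ and contradicting $w \neq 1$. Putting the cases together, every normalising $g$ has the form $s^k w^j \in H$, and so $H$ is self-normalising.
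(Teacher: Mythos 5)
Your proof is correct and follows essentially the same route as the paper's: write a normalising element as $s^k u$ with $u \in F_n$, use that $s$ commutes with $w$ to reduce to conjugation by $u$, intersect with $F_n$ to get $u^{-1}wu \in \langle w \rangle$, and conclude $u \in \langle w \rangle$. The paper compresses the last step into one line, whereas you supply the details it leaves implicit --- ruling out $u^{-1}wu = w^{-1}$ via the $u^2 \in C_{F_n}(w)$ trick and invoking $C_{F_n}(w) = \langle w \rangle$ for a non-proper-power --- so your write-up is, if anything, more complete.
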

\begin{proof}
	Suppose $s^kg \in F_n \rtimes \langle s \rangle$ so that $\langle w,s \rangle^{s^kg}=\langle w,s \rangle$. This gives that $\langle w^g, s^g \rangle =\langle w,s \rangle$. Taking intersections with $F_n$, we must have that $w^g \in \langle w \rangle$. But this means that $g \in \langle w \rangle$ so $s^kg \in \langle w,s \rangle$ as required.
\end{proof}

In the following Proposition, we take a Dehn Twist and use the Parabolic Orbits Theorem~\ref{thm:parabolic_orbits} to get a tree on which the corresponding free-by-cyclic group acts. We would like, at this stage, to say that the resulting action is canonical for the free-by-cyclic group. Although this seems plausible, our proof goes via the tree of cylinders construction which is guaranteed to be canonical and -- as we prove in this case -- remains in the same deformation space. 

\begin{prop}
\label{prop:canonical_tree_for_Dehn_twist}
	Suppose $\varphi$ is a UPG and linear automorphism of $F_n$. Then there is a canonical action of $G_0= F_n \rtimes_\varphi \Z$  on a tree, where \begin{enumerate}
	\item Edge stabilisers are maximal $\Z^2$;
	\item Vertex stabilisers are either maximal $\Z^2$, or maximal $F_m \times \Z$ with $n \geq m \geq 2$.
	\end{enumerate}
\end{prop}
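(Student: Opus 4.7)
The plan is to apply the Parabolic Orbits Theorem to the Dehn Twist $\varphi$ and then pass to the tree of cylinders to obtain a canonical tree with the stated stabiliser structure.

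First, by Theorem~\ref{dehnupg}, since $\varphi$ is UPG and linear, it is a Dehn Twist. Applying the Parabolic Orbits Theorem~\ref{thm:parabolic_orbits} gives a simplicial $F_n$-tree $T_0$ with maximal infinite cyclic edge stabilisers and vertex stabilisers $\fix$ for representatives $\phi \in \Phi$ with $\rank(\fix) \geq 2$. The discussion following Theorem~\ref{thm:parabolic_orbits} extends this to a $G_0$-action with maximal $\Z^2$ edge stabilisers and vertex stabilisers of the form $F_k \times \Z$ with $k \geq 2$. Maximality of the vertex stabilisers follows from the maximality of $\fix$ as an $F_n$-vertex stabiliser, together with the fact that the $\Z$-factor is generated by an element $sg$ with $s$-exponent exactly $1$, so it cannot be a proper power in $G_0$.

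To establish canonicity, I would pass to the tree of cylinders $T_c$ of $T_0$. Let $\mathcal{E}$ be the collection of $\Z^2$ subgroups of $G_0$, with $\sim$ commensurability. Closure under conjugation and the containment axiom are immediate; the ellipticity condition of \cite[Lemma 3.2]{GuirardelLevittTreesOfCylinders} follows since two commensurable $\Z^2$s are contained in a common $\Z^2$. By Lemma~\ref{lem:small_normaliser_Z_2} each maximal $\Z^2$ in $G_0$ is its own normaliser, so cylinders in $T_0$ consist of single orbits of edges and cylinder stabilisers are the $\Z^2$ edge stabilisers themselves. In particular cylinder stabilisers are elliptic in $T_0$, so by \cite[Proposition 5.2]{GuirardelLevittTreesOfCylinders} the tree $T_c$ lies in the same deformation space as $T_0$. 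Geometrically $T_c$ is a subdivision of $T_0$: the interposed cylinder vertices carry maximal $\Z^2$ stabilisers, while the original vertices retain their maximal $F_k \times \Z$ stabilisers, giving the structure claimed in (1) and (2).

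For canonicity of this action, the deformation space of $T_0$ must be invariant under all of $\Aut(G_0)$. I would argue this by characterising the elliptic subgroups intrinsically as (conjugates of) the maximal $F_k \times \Z$ subgroups of $G_0$ with $k \geq 2$ whose $\Z$-factor projects non-trivially to $G_0/F_n$. Since isomorphism type, maximality, and the projection to the cyclic quotient are all preserved by automorphisms of $G_0$, this collection is $\Aut(G_0)$-invariant up to conjugation, and by \cite[Corollary 4.10]{GuirardelLevittTreesOfCylinders} the tree $T_c$ is the canonical representative of its deformation space and is preserved by every element of $\Aut(G_0)$. The main obstacle here is verifying the intrinsic characterisation precisely — that every maximal $F_k \times \Z$ of the stated form actually occurs as a vertex stabiliser of $T_0$, rather than arising spuriously from a fixed subgroup of some $\phi' \in \Phi^j$ with $j > 1$ that has no analogue in $\Phi$ — which requires a careful analysis of fixed subgroups of Dehn Twists and their powers.
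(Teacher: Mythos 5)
Your route is essentially the paper's: apply the Parabolic Orbits Theorem to get the $G_0$-tree, pass to the tree of cylinders for the family of $\Z^2$ subgroups (the paper takes maximal $\Z^2$ with equality rather than all $\Z^2$ with commensurability, but since the edge stabilisers are already maximal $\Z^2$ and distinct maximal $\Z^2$'s here are not commensurable, the two choices give the same cylinders), use Lemma~\ref{lem:small_normaliser_Z_2} to identify cylinder stabilisers with edge stabilisers and conclude the deformation space is unchanged, and get canonicity from an algebraic characterisation of the vertex stabilisers. Two points, however, need correcting or completing. First, a cylinder is not ``a single orbit of edges'': it is the set of \emph{all} edges with a given stabiliser $P$, and what Lemma~\ref{lem:small_normaliser_Z_2} yields is that a cylinder contains \emph{at most one edge from each orbit}, with setwise stabiliser $P$. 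A cylinder may still contain several edges (one per orbit), in which case coning it off is not a subdivision of $T_0$ and the new tree can acquire tripod stabilisers --- this is exactly the point of the remark following the proposition. Your conclusions (1) and (2) survive, because they depend only on the stabilisers of cylinder vertices, boundary vertices and new edges, but the claim that $T_c$ is a subdivision of $T_0$ is false in general and should not be relied on.

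Second, the canonicity step. Your extra condition that the $\Z$-factor ``projects non-trivially to $G_0/F_n$'' is both suspect ($F_n$ need not be characteristic in $G_0$) and redundant: the centre of any $F_k\times\Z$ subgroup with $k\ge 2$ automatically meets $F_n$ trivially, since non-trivial elements of $F_n$ have cyclic centralisers in $F_n$. So your characterisation reduces to ``maximal subgroups isomorphic to $F_k\times\Z$ with $k\ge2$'', which is manifestly $\Aut(G_0)$-invariant --- and then the only remaining issue is exactly the one you flag and leave open: such a subgroup is the centraliser of some $s^jw$ with $j\neq 0$, and one must rule out new subgroups arising from $|j|\ge 2$. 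This is a genuine step, not a technicality, and it is where the UPG hypothesis enters: UPG (in particular Dehn twist) automorphisms fix their periodic conjugacy classes, so fixed subgroups of representatives of $\Phi^j$ do not exceed those of representatives of $\Phi$, and the centralisers occurring at higher powers are already vertex stabilisers of $T_0$. The paper's proof handles this by invoking the Parabolic Orbits Theorem's characterisation of vertex stabilisers as the centralisers of elements $sw$ whose corresponding representative of $\Phi$ has fixed subgroup of rank at least two; in your write-up the step must actually be closed rather than recorded as an obstacle, since without it the $\Aut(G_0)$-invariance of the deformation space --- and hence canonicity --- is not established.
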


\begin{proof}
The initial input for the construction is the Dehn twist, $\varphi$. By Theorem~\ref{thm:parabolic_orbits}, there is a unique simplicial $F_n$-tree (defining a simplex in the boundary of $CV_n$) that is preserved by $\varphi$. 
This tree gives a splitting of $F_n$, where the vertex stabilisers are fixed subgroups (of rank at least two) corresponding to different representatives of the outer automorphism, and the edge groups are maximal infinite cyclic. By Theorem~\ref{Bestvina-Handel} there are only finitely many conjugacy classes of these subgroups, and their ranks are bounded by $n$.

Since it is fixed by $\varphi$, the same tree provides a splitting for $G_0$. The vertex groups are now free times cyclic, and the edge groups are maximal $\Z^2$. (They are generated by the original edge group generator $g$, together with an element $sw$ in either adjacent edge group which commutes with $g$. They must be maximal since otherwise there would be another element $s^kh$ commuting with $g$ (and $sw$); writing this element as $(sw)^kh'$ implies that $h'$ commutes with $g$. Since $g$ generated a maximal infinite cyclic subgroup of $F_n$, $h'$ is a power of $g$, and so $(sw)^kh'$ is contained in $\langle g,sw \rangle$.) 

This tree defines a deformation space which is preserved by automorphisms, since the vertex stabilisers can be specified algebraically: they are precisely the centralisers of some $sw$, corresponding to an automorphism in the outer automorphism class of $\varphi$ with fixed subgroup having rank at least $2$. (Equivalently, they are the centralisers that contain a copy of $F_2 \times \Z$.) So they will be permuted by automorphisms of $F_n \rtimes_\varphi \Z$ and the deformation space must be preserved. 

We now have most of the tools to start constructing a tree of cylinders for this deformation space: it remains to specify the family $\mathcal{E}$ of allowed edge stabilisers, and the admissible equivalence relation. We will take $\mathcal{E}$ to be maximal 
$\Z^2$, and the equivalence relation to be equality. (It is easy to check this is admissible, since if $A \leq B$ are both maximal $\Z^2$ then we must have $A=B$).

Now we can calculate the cylinders. First, note that a cylinder may contain at most one edge from each edge orbit. If two edges in the same orbit have the same stabiliser, then there is an element outside the stabiliser which normalises it. However, Lemma~\ref{lem:small_normaliser_Z_2} shows that there is no such element.

This also means that a cylinder stabiliser must actually stabilise it pointwise: since it is a subgroup of $G_0$, it cannot permute edges in different orbits. So cylinder stabilisers are precisely the stabiliser of any (and every) edge in that cylinder. Every vertex is in multiple cylinders, so is also in the tree of cylinders. 

Cylinders are finite, and in particular bounded, so the tree of cylinders will lie in the same deformation space. It is already collapsed, since the edge stabilisers are still (maximal) $\Z^2$. 
\end{proof}

\begin{rem}
Note that an alternative construction of this canonical tree involves subdividing every edge and folding -- the effect of constructing the tree of cylinders is to change to original tree so that each vertex has at most one adjacent edge with a given stabiliser. There are examples where the tree of cylinders is not very small -- it has tripod stabilisers, so the construction has done something.

However, the (finite index) subgroup of automorphisms which does not permute the underlying graph of groups does act on the original limiting tree, since we can recover it by equivariantly collapsing some edges. This means that in our terminology the action on the limiting tree was itself nearly canonical, though it is not clear how to find a direct proof of this fact.

If a cylinder had only one edge, then it will have been subdivided -- allowing (if the endpoints are isomorphic) for the possibility of inversions. (If not, or if the endpoints are not isomorphic, no inversions are possible.)
\end{rem}

We now equip ourselves with a nearly canonical action for a general linearly growing automorphism , using this tree of cylinders.

\begin{prop}
\label{prop:linear_nearly_canonical_tree}
Suppose $G=F_n \rtimes_{\Phi} \Z$ is a free-by-cyclic group, and $\Phi$ is linearly growing. Then $G$ has a nearly canonical action on a tree $T$, where \begin{enumerate}
\item Edge stabilisers are virtually $\Z^2$ (and therefore either $\Z^2$ or the fundamental group of a Klein bottle).
\item Vertex stabilisers are $F_m \rtimes_\varphi \Z$ where $F_m$ is a subgroup of $F_n$, the rank $m$ is at most $n$, and $\varphi$ is a representative of $\Phi$, which restricts to and is periodic on $F_m$. (They are virtually free-times-cyclic.)
\end{enumerate}
\end{prop}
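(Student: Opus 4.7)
The plan is a direct application of the machinery assembled earlier in the section. Given $\Phi$ linearly growing, Definition~\ref{defn:UPG} provides a positive integer $r$ (depending only on $n$) such that $\Phi^r$ is UPG, and Theorem~\ref{dehnupg} then says $\Phi^r$ is a Dehn twist. Fixing a representative $\varphi$ of $\Phi$, set $G_0 = F_n \rtimes_{\varphi^r} \Z$: as a subgroup of $G$ this is generated by $F_n$ and $t^r$, so it is normal in $G$ of index $r$. Proposition~\ref{prop:canonical_tree_for_Dehn_twist} applied to $\varphi^r$ then furnishes a canonical $G_0$-tree $T$ whose edge stabilisers are maximal $\Z^2$ and whose vertex stabilisers are either maximal $\Z^2$ or maximal $F_m \times \Z$ with $2 \leq m \leq n$.

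With this $T$ in hand, I would invoke Proposition~\ref{prop:no_really_theres_an_action} (applied to the normal finite index subgroup $G_0 \trianglelefteq G$ and the canonical $G_0$-tree $T$) to obtain an action of $G$ on $T$ that restricts to the original $G_0$-action and is nearly canonical as a $G$-action. This immediately gives the nearly canonical action required by the proposition; what remains is to pin down the shape of the $G$-stabilisers.

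For this last step I would apply Lemma~\ref{lem:stabs_are_free_by_cyclic} to the $G$-action. Each edge or vertex stabiliser is free-by-cyclic, its free part equals the $F_n$-stabiliser (and hence the free part of the corresponding $G_0$-stabiliser), while the cyclic factor is generated by some $t^k w$ with $w \in F_n$. For an edge, the free part is $\Z$, so the stabiliser has the form $\Z \rtimes \Z$; since it contains the $G_0$-edge stabiliser $\Z^2$ as a finite index subgroup, it is virtually $\Z^2$, and therefore either $\Z^2$ or the fundamental group of the Klein bottle. For a vertex, the free part is $F_m$ with $m \leq n$, and the conjugation action of $t^k w$ represents $\Phi^k$ restricted to $F_m$; call this automorphism $\varphi$ (abusing notation).

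The one point requiring verification, which I view as the main (though still mild) obstacle, is the periodicity claim. Some power of $\varphi$ agrees on $F_m$ with the action of a generator of the $G_0$-cyclic factor at that vertex, which is either trivial (in the $F_m \times \Z$ case) or inversion on $\Z$ (in the $\Z^2$ case with $m=1$). In either case a power of $\varphi$ is the identity on $F_m$, so $\varphi$ itself has finite order on $F_m$, i.e.\ is periodic. This completes the verification of (1) and (2) and hence the proposition.
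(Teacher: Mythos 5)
This is exactly the paper's argument: pass to the UPG power to obtain the normal finite-index subgroup $G_0$, apply Proposition~\ref{prop:canonical_tree_for_Dehn_twist} and then Proposition~\ref{prop:no_really_theres_an_action}, and read off the shape of the $G$-stabilisers from Lemma~\ref{lem:stabs_are_free_by_cyclic} together with the finite-index containment of the $G_0$-stabilisers; your write-up simply fills in this last step, which the paper compresses into a single sentence. (One tiny slip: in the $\Z^2$ case the generator of the $G_0$-cyclic factor acts trivially on the rank-one free part, not by inversion --- inversion would make the $G_0$-stabiliser a Klein bottle group --- but since a power is the identity either way, your periodicity conclusion is unaffected.)
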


\begin{proof}
Since $\Phi$ has a power which is UPG, and therefore a Dehn twist, we pass to the normal finite index subgroup $G_0$ this suggests and use Proposition~\ref{prop:canonical_tree_for_Dehn_twist} to construct a canonical tree $T$. We then use Proposition~\ref{prop:no_really_theres_an_action} to extend this action to a nearly canonical action for $G$. Edge and vertex stabilisers in $G$ will contain edge and vertex stabilisers in $G_0$ as finite index subgroups, and must themselves be free-by-cyclic by Lemma~\ref{lem:stabs_are_free_by_cyclic}. Combining these properties gives the conclusions in (i) and (ii).
\end{proof}

\subsection{Reducing to free-by-finite groups}

We consider the subgroup $\Out^T(G)$ of outer automorphisms which preserves this tree, and apply Theorem~\ref{thm:BassJiang} to understand it. The quotients at parts (1) and (3-5) of the theorem are finitely generated by the observations following the theorem; the main difficulty is in understanding the quotient at (2).

First, we reduce to the case where we can consider McCool groups; we will then show that the result we want is implied by a similar result in the free-by-finite group obtained by quotienting by the centre, and in the next section prove the result there. (The arguments involved in the reduction and the following section are easier for the larger groups $\Out(G_v;\{G_e\}_{\iota(e)=v})$ at least when the edge groups all contain the centre of the vertex group as in our case. However, it does not seem possible to take account of the edge compatibility relations through this process, so we do need to pass to McCool groups.)

We begin 
with a straightforward structural result about free-by-cyclic groups defined by periodic outer automorphisms; 

\begin{lem}[{\cite[Proposition 4.1]{LevittRankOfGBS}}]
\label{lem:periodic_implies_centre}
Suppose $G$ is a free-by-cyclic group which is virtually free-times-cyclic and not virtually $\Z^2$. Then $G$ has an infinite cyclic centre, and is the fundamental group of a graph of groups with all edge and vertex groups isomorphic to $\Z$. 
\end{lem}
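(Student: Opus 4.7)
The plan is to extract an explicit infinite cyclic central subgroup of $G$, and then realize $G/Z(G)$ as a virtually free group, to which Karrass--Pietrowski--Solitar applies.

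First I would fix a finite-index subgroup $H \leq G$ with $H \cong F_k \times \Z$. Writing $G = F_n \rtimes \langle t \rangle$, the hypothesis that $G$ is not virtually $\Z^2$ forces $n \geq 2$ and $k \geq 2$, so $Z(H) \cong \Z$, generated by some $c \in G$. The intersection $H \cap F_n$ is finite-index in $F_n$ and hence free of rank $\geq 2$; it is centreless, and since $c$ centralizes $H \cap F_n$, we get $c \notin F_n$, so $c$ has nonzero image under the quotient map $G \to G/F_n \cong \Z$.

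Next I would promote $c$ from $Z(H)$ to $Z(G)$. Since $C_G(c) \supseteq H$ has finite index in $G$, its normal core $N$ is a normal finite-index subgroup of $G$ that centralizes $c$; so $\langle c \rangle$ is characteristic (because central) in $N$, and hence normal in $G$. The induced action of $G$ on $\langle c \rangle \cong \Z$ is by $\pm 1$, but conjugation in $G$ preserves the image in $G/F_n \cong \Z$ (since $F_n \trianglelefteq G$), and the image of $c$ is nonzero; so $c$ and $c^{-1}$ are not $G$-conjugate, the action is trivial, and $c \in Z(G)$. Now $Z(G) \cap H \leq Z(H) = \langle c \rangle$ has finite index in $Z(G)$, making $Z(G)$ virtually infinite cyclic, and torsion-freeness of $G$ forces $Z(G) \cong \Z$.

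For the graph-of-$\Z$'s decomposition, set $Q = G/Z(G)$. The image of $H$ in $Q$ has finite index and is a quotient of $F_k \times \Z$ by a finite-index subgroup of its $\Z$ factor, so it has the form $F_k \times \Z/m$, which is virtually free; hence $Q$ is virtually free, and by Karrass--Pietrowski--Solitar it is the fundamental group of a finite graph of finite groups. Pulling this decomposition back along $G \to Q$ expresses $G$ as the fundamental group of a finite graph of groups whose vertex and edge groups are central extensions of those finite groups by $Z(G) \cong \Z$. Each such extension is torsion-free (since $G$ is) and virtually $\Z$, and is therefore isomorphic to $\Z$ itself. The one delicate point in all this is showing $c \in Z(G)$ rather than merely $Z(H)$: the free-by-cyclic structure is essential here, through the $G/F_n \cong \Z$ quotient detecting the $t$-exponent. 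The remaining steps are standard consequences of torsion-freeness and of Karrass--Pietrowski--Solitar.
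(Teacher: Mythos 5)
Your proof takes a genuinely different route from the paper's, and the overall strategy is sound, but one step does not hold as stated. You claim $\langle c\rangle$ is characteristic in $N$ ``because central''. Central subgroups need not be characteristic (in $\Z^2$ the first factor is central but is moved by the coordinate swap), and it is not even established that $c\in N$, so $\langle c\rangle\leq N$ is unclear. The fix is to take $N$ to be the normal core of $H$ itself, so that $N\trianglelefteq G$ has finite index and $N\leq H\cong F_k\times\Z$, and then to work with $Z(N)$ in place of $\langle c\rangle$: $Z(N)$ is characteristic in $N$ and hence normal in $G$; it intersects $F_n$ trivially because $N\cap F_n$ has rank at least $2$ and so has trivial centraliser in $F_n$, so $Z(N)$ injects into $G/F_n\cong\Z$ and is cyclic; and it contains the nontrivial group $N\cap Z(H)$, so $Z(N)\cong\Z$. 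Your ``$\pm 1$'' argument then applies verbatim with $Z(N)$ in place of $\langle c\rangle$, giving $Z(N)\leq Z(G)$ and hence $Z(G)\cong\Z$. With that patch the rest of your argument goes through.

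On the comparison: the paper works directly with the defining automorphism $\varphi$, showing by a short computation that the candidate central element $t^kg^{-1}$ commutes with the stable letter (equivalently, that $g\varphi=g$), and then invokes the $\Out(F_n)$ realisation theorem of Culler and Khramtsov to obtain a free $F_n$-tree fixed by $\varphi$, which extends to a $G$-action with infinite cyclic stabilisers. You instead establish $Z(G)\cong\Z$ by a soft finite-index argument, then apply Karrass--Pietrowski--Solitar to the virtually free quotient $G/Z(G)$ and pull the resulting graph of finite groups back through the central extension, using torsion-freeness of $G$ to identify the lifted edge and vertex groups with $\Z$. The two routes to the splitting are close in spirit --- the realisation theorem is essentially a finite-subgroups-of-$\Out(F_n)$ incarnation of the Karrass--Pietrowski--Solitar/Stallings circle of ideas --- but your version never refers to the automorphism or to outer space and works directly from the hypothesis ``virtually free-times-cyclic'', at the cost of the extra quotient-and-pull-back step; the paper's version is more explicit about where the central element comes from.
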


Such a group is known as a \emph{Generalised Baumslag-Solitar (GBS) group}, and having a non-trivial centre is equivalent to having trivial modulus, in the language of~\cite{LevittGBS}. The free-by-(finite cyclic) groups we will consider are obtained by taking a group of this kind and quotienting by the centre.

We now study the group appearing as a quotient at (2) in Theorem~\ref{thm:BassJiang}, beginning by considering automorphisms of edge groups that can be induced here.

\begin{lem}
\label{lem:induce_few_autos_of_edges}
Suppose $G$ is a free-by-cyclic group that is virtually free-times-cyclic, and $H_i$ is a collection of subgroups isomorphic either to $\Z^2$ or to the fundamental group of a Klein bottle. Then $\Out(G;\{H_i\})$ induces a virtually cyclic subgroup of $\Aut(H_i)$.
\end{lem}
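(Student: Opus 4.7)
The plan is to locate, inside each $H_i$, a canonical infinite cyclic subgroup that must be preserved by every representative appearing in $\Out(G;\{H_i\})$, and then to observe that the stabiliser of such a subgroup in $\Aut(H_i)$ is automatically virtually cyclic.

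First I would dispose of the small cases. If $G$ is itself virtually $\Z^2$, then $G$ is either $\Z^2$ or the fundamental group of a Klein bottle; in the latter case $\Aut(G)$ is a finite extension of $\Inn(G)\cong D_\infty$, hence already virtually cyclic, and there is nothing to prove. The case $G\cong\Z^2$ does not arise in the intended applications, since $\Z^2$ admits no nontrivial splitting. In all remaining cases, Lemma~\ref{lem:periodic_implies_centre} provides an infinite cyclic centre $Z\leq G$.

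The key step is to argue that $Z\cap H_i$ is infinite cyclic and of finite index in $Z$. Since $H_i$ is virtually $\Z^2$ and $Z$ commutes with $H_i$, the subgroup $H_iZ$ is virtually abelian; because $G$ is virtually $F_m\times\Z$ it contains no copy of $\Z^3$, so in fact $H_iZ$ is virtually $\Z^2$, which forces $Z\cap H_i$ to be infinite cyclic of finite index in $Z$. Now $Z$ is characteristic in $G$, so any representative of an element of $\Out(G;\{H_i\})$ that restricts to $H_i$ also preserves $Z$ and therefore preserves the distinguished subgroup $Z\cap H_i$ of $H_i$ setwise. It follows that the induced subgroup of $\Aut(H_i)$ lies inside the stabiliser of $Z\cap H_i$.

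Finally I would compute this stabiliser. If $H_i$ is a Klein bottle, $\Aut(H_i)$ is already virtually cyclic and there is nothing left to do. If $H_i\cong\Z^2$, choose a basis in which $Z\cap H_i$ is $\Z\times\{0\}$; then the stabiliser in $\GL_2(\Z)$ consists of matrices $\left(\begin{smallmatrix}\pm 1 & b\\ 0 & \pm 1\end{smallmatrix}\right)$ with $b\in\Z$, which is visibly virtually infinite cyclic. The only piece of real content is the no-$\Z^3$ observation used to pin down $Z\cap H_i$ as a nontrivial, canonical subgroup of $H_i$; once that is in place, everything else is a direct structural check.
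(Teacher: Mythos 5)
Your proof is correct and follows essentially the same route as the paper's: both arguments locate the infinite cyclic centre of $G$ inside $H_i$ (up to finite index), note it is preserved by any automorphism restricting to $H_i$, and conclude that the induced automorphisms of $\Z^2$ are triangular, while the Klein bottle case is immediate since its automorphism group is already virtually cyclic. The only difference is that you supply a justification (via the absence of $\Z^3$ subgroups) for the fact that $H_i$ meets the centre nontrivially, which the paper simply asserts.
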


\begin{proof}

If $H_i$ is the fundamental group of a Klein bottle, $\Out(H_i)$ is finite, and $\Inn(H_i)$ is virtually cyclic. Therefore $\Aut(H_i)$ is again virtually cyclic, and so is the subgroup induced by $\Out(G;\{H_i\})$.

If $H_i$ is $\Z^2$, it contains a finite index subgroup of the infinite cyclic centre of $G$. 
Let $\delta$ generate this subgroup. 
We can choose a basis $\{x_1,x_2\}$ for $H_i$ so that $\delta = x_1^k$ with $k>0$; roots are unique in $\Z^2$, so $x_1$ is as uniquely defined as $\delta$: it is unique up to inverses. Any automorphism of $G$ will preserve the centre; in particular it must send $\delta$ to itself or its inverse. So any automorphism restricting to $G_e$ will likewise send $x_1$ to itself or its inverse. Viewing elements of $\GL(2,\Z)$ as matrices, this implies that we can only induce automorphisms represented by triangular matrices. This subgroup is virtually cyclic.\qedhere
\end{proof}

We use this to characterise the subgroup generated when we quotient by the product of McCool groups, which will mean it is sufficient to prove that those are finitely generated.

\begin{prop}
\label{prop:linear_McCool_is_enough}
Suppose $G$ is a free-by-cyclic group where the defining outer automorphism is linearly growing. Let $T$ be the tree constructed in Proposition~\ref{prop:linear_nearly_canonical_tree}, with a nearly canonical action of $G$ (where edge stabilisers are virtually $\Z^2$, and vertex stabilisers are either virtually $\Z^2$ or virtually $F_m \times \Z$ with $m\geq2$). Then the quotient of ${\prod_{v \in V(\Gamma)}}'\Out(G_v;\{G_e\}_{\iota(e)=v})$ (from Theorem~\ref{thm:BassJiang} (2)) by $\prod_{v\in V(\Gamma)}\MC(G_v;\{G_e\}_{\iota(e)=v})$ (as described in Lemma~\ref{lem:quotient_by_McCool}) is finitely generated.
\end{prop}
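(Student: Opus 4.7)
The plan is to combine Lemma~\ref{lem:quotient_by_McCool}, which embeds the quotient of interest into $\prod_{e \in E(\Gamma)} A_e / \Ad(N_e)$, with Lemma~\ref{lem:induce_few_autos_of_edges}, which forces each $A_e$ to be small, and then invoke Noetherianity of finitely generated virtually abelian groups. First I would note that since $G$ is finitely generated and acts minimally on $T$, Lemma~\ref{lem:stabs_are_free_by_cyclic} implies the quotient graph $\Gamma$ is finite, so the product is over a finite index set.

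Next, at each edge $e$ the hypotheses of Lemma~\ref{lem:induce_few_autos_of_edges} hold for $G_{\iota(e)}$ (in the role of $G$) and $G_e$ (in the role of $H_i$): by Proposition~\ref{prop:linear_nearly_canonical_tree} together with Lemma~\ref{lem:stabs_are_free_by_cyclic}, the vertex group $G_{\iota(e)}$ is free-by-cyclic and virtually free-times-cyclic, while the edge group $G_e$ is virtually $\Z^2$ and hence either $\Z^2$ or the fundamental group of a Klein bottle. Therefore the subgroup of $\Aut(G_e)$ induced by $\Out(G_{\iota(e)};\{G_e\}_{\iota(e)=v})$ is virtually cyclic, and since by Lemma~\ref{lem:quotient_by_McCool} every element of $A_e$ is induced by an automorphism of $G_{\iota(e)}$, $A_e$ itself sits inside this induced subgroup and is virtually cyclic. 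Consequently each $A_e / \Ad(N_e)$ is virtually cyclic, and the finite direct product $\prod_{e \in E(\Gamma)} A_e / \Ad(N_e)$ is virtually $\Z^k$ for some $k$. Such a group is polycyclic, hence Noetherian, so every subgroup is finitely generated. Applying this to the subgroup identified in Lemma~\ref{lem:quotient_by_McCool} gives the conclusion.

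I do not expect a substantive obstacle here: the real work has already been done in Lemma~\ref{lem:induce_few_autos_of_edges}, which exploits the infinite cyclic centre (from Lemma~\ref{lem:periodic_implies_centre}) of a virtually $(F_m \times \Z)$ vertex group to force induced automorphisms of a $\Z^2$ edge group into the triangular subgroup of $\GL_2(\Z)$. The only point worth verifying carefully is that every vertex group appearing in $T$ really is virtually free-times-cyclic rather than some more general free-by-cyclic shape, and this is precisely what the construction of $T$ via a UPG (hence Dehn twist) power of $\Phi$ guarantees.
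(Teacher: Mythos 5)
Your overall architecture matches the paper's: embed the quotient into $\prod_{e} A_e/\Ad(N_e)$ via Lemma~\ref{lem:quotient_by_McCool}, bound each factor, and conclude by Noetherianity of a finitely generated virtually abelian group. However, there is a genuine gap in the step where you claim that ``at each edge $e$ the hypotheses of Lemma~\ref{lem:induce_few_autos_of_edges} hold for $G_{\iota(e)}$.'' The tree $T$ is bipartite: besides the vertices with stabiliser virtually $F_m\times\Z$, $m\geq 2$, there are the cylinder vertices, whose stabilisers are virtually $\Z^2$ (either $\Z^2$ itself or a Klein bottle group). For an edge $e$ whose initial vertex $v$ is a $\Z^2$ cylinder vertex, Lemma~\ref{lem:induce_few_autos_of_edges} does not give what you want: its proof for the $\Z^2$ case rests on $H_i$ containing a finite index subgroup of the \emph{infinite cyclic} centre of the ambient group, which forces induced automorphisms to be triangular. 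When $G_v\cong\Z^2$ the centre is all of $\Z^2$, the argument collapses, and the conclusion is simply false there --- $\Out(\Z^2;\{G_e\})$ is a finite index subgroup of $\GL_2(\Z)$ and induces a finite index (hence non-virtually-cyclic) subgroup of $\Aut(G_e)\cong\GL_2(\Z)$. So the projection to $A_e/\Ad(N_e)$ at such an edge is not controlled by your argument. Your closing remark identifies the wrong worry: the issue is not whether the vertex groups are virtually free-times-cyclic (a $\Z^2$ vertex group is, degenerately), but that the lemma's mechanism needs the centre to be strictly smaller than the edge group's rank-two abelian part.

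The paper closes this gap using the structure of the tree of cylinders together with the compatibility condition built into the restricted product. Every edge joins a cylinder vertex to a large vertex, and Lemma~\ref{lem:quotient_by_McCool} guarantees $A_e=A_{\overline{e}}$; since the opposite end $\overline{e}$ sits at a vertex where Lemma~\ref{lem:induce_few_autos_of_edges} \emph{does} apply, $A_e$ is virtually cyclic after all, and hence so is $A_e/\Ad(N_e)$ at the cylinder end. (Klein bottle cylinder vertices are handled separately and trivially, since their outer automorphism groups are finite.) With that repair your argument goes through exactly as the paper's does.
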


\begin{proof}
We consider the projection to each factor $A_e/\Ad(N_e)$. The subgroup we are interested in is contained in the product of these projections, which we will show is Noetherian (every subgroup is finitely generated) and from there deduce that our subgroup must be finitely generated.

First, we consider the vertices where the stabiliser contains a rank 2 free group. In this case, by Lemma~\ref{lem:induce_few_autos_of_edges} each of these vertex groups can only induce a virtually cyclic subgroup of automorphisms of each edge group. This is a property closed under subgroups and quotients, so for every edge $e$ with $\iota(e)$ a vertex of this type the projection to $A_e/\Ad(N_e)$ is virtually cyclic.

The remaining vertices arose as cylinders, and their vertex groups are either the fundamental group of a Klein bottle or $\Z^2$ (as are the incident edge groups). If $G_v$ is a Klein bottle, then it has finite outer automorphism group. So $\Out(G_v;\{G_e\}_{\iota(e)=v})$ is finite, and $\Ad(N_e)$ must therefore be finite index in $A_e$ for each edge group. So the projection to $A_e/\Ad(N_e)$ for edges starting at these vertices is finite.

If $G_v$ is $\Z^2$, we need to use the structure of the tree. The quotient graph inherits the bipartite structure of the tree of cylinders constructed in Proposition~\ref{prop:canonical_tree_for_Dehn_twist} -- every edge joins a cylinder vertex to a vertex with larger stabiliser. By Lemma~\ref{lem:quotient_by_McCool} the induced automorphisms $A_e$ and $A_{\overline{e}}$ of the stabilisers of an edge and its inverse are the same. By Lemma~\ref{lem:induce_few_autos_of_edges} this is virtually cyclic, and so the same is true of the projection to $A_e/\Ad(N_e)$ in this case.

Assembling these projections we get a group that is virtually finitely generated abelian, and in particular is Noetherian.  So any subgroup -- including the quotient of 
$${\prod_{v \in V(\Gamma)}}'\Out(G_v;\{G_e\}_{\iota(e)=v})$$
 by the product of McCool groups -- is again finitely generated.
\end{proof}

In the Klein bottle case, the McCool group (as with any subgroup of the outer automorphism group) is finite, and in particular finitely generated. In the $\Z^2$ case the McCool group is trivial since elements of $\GL_2(\Z)$ are uniquely characterised by their action on a finite index subgroup of $\Z^2$: as soon as an edge group is fixed, so is the whole vertex group. Therefore the remainder of the work is at the vertices stabilised by some $F_m \rtimes \Z$, with $m \geq 2$.

This reduces the problem to calculating the McCool groups at each vertex. We use Levitt's work in \cite{LevittGBS} to further reduce the problem to McCool groups of free-by-(finite cyclic) groups.

By Lemma~\ref{lem:periodic_implies_centre}, the larger vertex groups $G_v$ are Generalised Baumslag-Solitar groups with trivial modulus. Levitt proves this theorem, which we use to enable us to understand $\Out(G)$ in terms of the outer automorphisms of a free-by-finite group.

\begin{thm}[see {\cite[Theorem 4.4]{LevittGBS}}] 
	\label{thm:Levitt_for_periodic}
	Suppose $G$ is a GBS group with trivial modulus, and let $H$ be the quotient of $G$ by its centre. Then there is a finite index subgroup $\Out_0(G)$ of $\Out(G)$ fitting into a split exact sequence \[
1 \rightarrow \Z^k \rightarrow \Out_0(G) \rightarrow \Out_0(H) \rightarrow 1 \]

where $k$ is the rank of the underlying graph, and $\Out_0(H)$ is a finite index subgroup of $\Out(H)$. The section of $\Out_0(H)$ fixes the centre of $G$.
\end{thm}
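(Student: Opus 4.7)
The plan is to exploit the central extension $1 \to Z \to G \to H \to 1$, where $Z = Z(G) \cong \Z$ by Lemma~\ref{lem:periodic_implies_centre} and $H$ is virtually free (the fundamental group of the graph of finite cyclic groups obtained by quotienting the GBS decomposition of $G$ by $Z$). First I would set up the finite-index subgroups: every automorphism of $G$ preserves $Z$ and acts on it as $\pm 1$, so the subgroup $\Aut_0(G) \le \Aut(G)$ fixing a chosen generator of $Z$ has index at most $2$; let $\Out_0(G)$ be its image in $\Out(G)$. Restriction to $H = G/Z$ gives a map $\Out_0(G) \to \Out(H)$, and I would define $\Out_0(H)$ to be the image. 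To check that $\Out_0(H)$ is finite index in $\Out(H)$, view $G$ as a central extension classified by a class $e \in H^2(H; \Z)$; an outer automorphism of $H$ lifts to $\Aut_0(G)$ exactly when it stabilises $e$, and since $H$ is virtually free the group $H^2(H; \Z)$ is finitely generated and the $\Out(H)$-orbit of $e$ is finite, so its stabiliser is finite index.

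Next I would compute the kernel of $\Out_0(G) \to \Out_0(H)$. Any $\alpha \in \Aut_0(G)$ descending to the identity on $H$ has the form $\alpha(g) = g \cdot z(g)$ with $z(g) \in Z$; centrality of $Z$ forces $z \colon G \to Z$ to be a homomorphism, and the condition that $\alpha$ fix $Z$ pointwise forces $z|_Z = 0$, so $z$ factors through $H^{\mathrm{ab}}$. No non-trivial inner automorphism lies in this kernel, since an inner automorphism inducing the identity on $H$ is conjugation by an element of $Z$ and hence trivial. The kernel is therefore $\Hom(H^{\mathrm{ab}}, \Z) \cong H^1(H; \Z)$, and a direct computation from the graph-of-groups structure of $H$ (vertex and edge groups finite cyclic, underlying graph of rank $k$) gives that this is $\Z^k$.

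The splitting is the main obstacle. The strategy is to build the section geometrically, using the Bass--Serre tree $T$ of the GBS structure: the same underlying graph of groups describes both $G$ and, after quotienting vertex and edge groups by $Z$, also $H$. Given $\Theta \in \Out_0(H)$, its action on the $H$-tree determines a graph automorphism together with automorphisms of each finite cyclic vertex and edge group; each such automorphism has a unique lift to an automorphism of the corresponding infinite cyclic $G$-stabiliser which fixes $Z$ pointwise. Reassembling these via Bass--Serre theory produces a candidate $\tilde\Theta \in \Aut_0(G)$, well-defined modulo the $\Z^k$ of choices of edge-monomorphism identifications. The delicate point is to show these canonical choices can be made to fit together so that composition is preserved, i.e.\ to give a genuine homomorphism $\Out_0(H) \to \Out_0(G)$; equivalently, the obstruction class in $H^2(\Out_0(H); \Z^k)$ vanishes. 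I would expect this to follow from the rigidity of the canonical vertex- and edge-group lifts, together with the observation that any failure of multiplicativity lands in the kernel $\Z^k$ that we are splitting off and so can be absorbed by a consistent choice of reference lifts.
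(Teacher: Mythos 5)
This statement is quoted from Levitt (\cite[Theorem 4.4]{LevittGBS}); the paper gives no proof of its own, so your proposal can only be judged on its own merits. The exact-sequence part of your argument is essentially sound: the kernel computation via $\Hom(H^{\mathrm{ab}},\Z)\cong\Z^k$ is correct (using that $H$ is centreless, which holds since the elementary GBS groups are excluded), and the finite-index claim via the extension class $e\in H^2(H;\Z)$ works --- though note that finite generation of $H^2(H;\Z)$ alone does not give a finite orbit; what you actually need is that $H^2(H;\Z)$ is \emph{finite}, which follows because it is finitely generated and annihilated by the index of a free subgroup of $H$.

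The genuine gap is the splitting, which is the substantive content of the theorem beyond exactness, and your argument for it both rests on a false premise and defers the key verification. A general element of $\Out_0(H)$ does \emph{not} act on the Bass--Serre tree of the given graph-of-groups decomposition of $H$, nor does it induce a graph automorphism together with automorphisms of the vertex and edge groups: for a virtually free group only the (typically infinite-index) subgroup $\Out^T(H)$ preserves a fixed tree, as the example $\Out(C_2\ast C_2\ast C_2)\cong\PGL_2(\Z)$ already shows. So the proposed lift $\tilde\Theta$ cannot even be defined for most $\Theta$. Even where it can be, the statement that the resulting assignment is a homomorphism --- equivalently, that the obstruction in $H^2(\Out_0(H);\Z^k)$ vanishes --- is exactly what must be proved, and ``I would expect this to follow'' is not an argument; a cocycle with values in the kernel being split off cannot simply be ``absorbed'' without showing it is a coboundary. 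Levitt's construction of the section goes through the intrinsic definition of $\Out_0(H)$ (preserving conjugacy classes of elliptic elements and the map $\overline{\tau}$) rather than through a fixed tree, and that is the part of the theorem your proposal does not reach.
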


The $\Z^k$ subgroup should be thought of as $\Hom(\pi_1(\Gamma),Z(G))$: it acts by multiplying every ``HNN-like generator'' by an element of the centre. In fact, it is generated by Dehn Twists - the last term in Theorem~\ref{thm:BassJiang}.   
The subgroup $\Out_0(H)$ consists of (outer classes of) automorphisms which preserve the conjugacy classes of elliptic elements, and the image of a certain map $\overline{\tau}$ to some finite cyclic group. 

The map $\tau$ is initially defined as a map, $\tau:G \to \Isom^+(\R)$ (translations of $\R$) and we then observe that the image  is discrete, and 
therefore $\tau: G \to \Z$.

Following \cite{LevittGBS}, define $\tau$ on a generator of a vertex group, $x_v$, as the translation by $1/n_v$, where $x_v^{n_v}=\delta$ for some $n_v \in \Z$ and where $\delta$ is a generator of the centre of $G$. Such an $n_v$ always exists since if a group acts on a tree without fixing an end, its centre lies in the kernel of the action. In particular, $\delta$ is contained in every vertex group.  (So $\tau(\delta)$ is translation by 1.) On generators arising from edges, $\tau$ is defined to be the identity. This is enough to define $\tau$ on the whole group. Since the image of every element is translation by a multiple of $1/\ell$, where $\ell$ is the least common multiple of the $n_v$, it follows that the image is discrete and hence $\tau: G \to \Z$. 

Further define $\overline{\tau}$ by taking a quotient by the group generated by $\tau(\delta)$. That is, $\overline{\tau}$  is the natural map, $\overline{\tau}: G \to \frac{G}{\ker \tau \cdot \langle \delta \rangle}$.

This definition does not apply to the ``elementary'' GBS groups, $\Z^2$ and the fundamental group of a Klein bottle. These are distinguished among free-by-cyclic groups as being virtually $\Z^2$, and this property cannot occur in a free-by-cyclic group with underlying free group having rank at least 2. Since the groups we consider here (corresponding to non-cylinder vertices in the nearly canonical tree) do, this definition (and the following arguments) apply in sufficient generality for our use.


In some sense, $\tau$ is a ``better'' map to $\Z$ than the one arising from the presentation of $G$ as a free-by-cyclic group. First consider its kernel:

\begin{lem}
\label{lem:tau_has_nice_kernel}
The kernel of the map $\tau$ is a finitely generated free group.
\end{lem}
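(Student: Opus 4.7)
My plan would be to embed $\ker(\tau)$ as a finite-index subgroup of the virtually free quotient $H := G/\langle \delta \rangle$, and then read off both freeness and finite generation from that embedding.

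First I would establish that $H$ is finitely generated and virtually free. Since $\delta$ is central, $\langle\delta\rangle$ lies in the kernel of the $G$-action on $T$, so this action factors through $H$, which inherits an action on $T$ with finite vertex stabilisers --- each $\langle x_v\rangle$ descends to $\Z/n_v\Z$ --- and hence finite edge stabilisers. As the fundamental group of a graph of finite groups on a finite underlying graph (namely $G\backslash T$), $H$ is finitely generated and virtually free.

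Next I would exploit the fact that $\tau(\delta) = 1 \neq 0$. This tells us that $\tau$ restricts injectively to $\langle\delta\rangle$, so $\ker(\tau) \cap \langle\delta\rangle = \{1\}$ and the composition $\ker(\tau) \hookrightarrow G \twoheadrightarrow H$ is injective. Reducing $\tau$ modulo $\tau(\delta)$ yields precisely the finite-cyclic map $\overline\tau : H \to \Z/N\Z$ from the set-up, and the image of $\ker(\tau)$ in $H$ is exactly $\ker(\overline\tau)$, a subgroup of index $N$. At that point $\ker(\tau)$ is a torsion-free (it sits inside $G$) finite-index subgroup of a finitely generated virtually free group, which forces it to be a finitely generated free group. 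I do not expect any serious obstacle; the one routine verification is that $\ker(\overline\tau)$ coincides with the image of $\ker(\tau)$ inside $H$, and this is immediate from the injectivity of $\tau|_{\langle\delta\rangle}$. (Alternatively, one could argue directly from Bass--Serre theory: no nontrivial element of $\ker(\tau)$ lies in any vertex stabiliser $\langle x_v\rangle$, since $\tau(x_v^k) = k/n_v$ vanishes only at $k=0$, and torsion-freeness of $G$ rules out edge inversions, so $\ker(\tau)$ acts freely on $T$ and is free. Finite generation would still be obtained via the index-$N$ embedding into $H$.)
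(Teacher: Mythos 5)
Your proof is correct, and the finite-generation mechanism is essentially the paper's own, just packaged in the quotient: the paper shows inside $G$ that $\ker(\overline{\tau})=\ker(\tau)\times\langle\delta\rangle$ is a finite-index subgroup of $G$ and concludes that $\ker(\tau)$, as a retract of it, is finitely generated; your observation that $\ker(\tau)$ injects into $H=G/\langle\delta\rangle$ with image exactly $\ker(\overline{\tau}|_H)$ of finite index is the same computation read modulo $\delta$. The one genuine divergence is in how you get freeness: your primary route first establishes that $H$ is virtually free and then invokes the fact that a torsion-free (finitely generated) virtually free group is free, which is a substantially heavier tool (Stallings--Swan / Karrass--Pietrowski--Solitar) than the paper's one-line argument that $\ker(\tau)$ acts freely on the GBS tree because $\tau$ is injective on each vertex group $\langle x_v\rangle$. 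Since you supply that direct Bass--Serre argument as your parenthetical alternative, nothing is missing; the tree argument is preferable simply because it is elementary and avoids any appeal to torsion-freeness results for virtually free groups.
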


This follows from the computation of the relevant BNS invariants in \cite[Corollary 3.2]{CashenLevittBNS}, but can be proved by more elementary methods as follows: 

\begin{proof}
Consider the action of the kernel on the GBS tree $T$. This action is free -- non-trivial elements of vertex stabilisers are not in the kernel of $\tau$ -- and so the kernel is a free group. It remains to show it is finitely generated. To do this consider $\overline{\tau}$, defined by passing to the quotient by $\tau(\delta)$. 

Note that $\ker(\overline{\tau}) = \ker(\tau) \cdot \langle \delta \rangle$, where $\tau: G \to \Z$ and $\delta \not\in \ker(\tau)$. Hence, $G/  \ker(\tau) \cdot \langle \delta \rangle$ is a finite cyclic group, and hence  $\ker(\tau) \cdot \langle \delta \rangle$ is a finite index subgroup of $G$, and hence is finitely generated. 

Moreover, since  $\delta$ has non-trivial (and infinite order) image under $\tau$, the intersection of $\ker(\tau)$ and $\langle \delta \rangle$ is trivial, and hence $ \ker(\tau) \cdot \langle \delta \rangle$ is actually the direct product, $\ker(\tau) \times \langle \delta \rangle$.

Since $\ker(\tau)$ is a quotient of the finitely generated group, $\ker(\tau) \times \langle \delta \rangle$, it too is finitely generated.

%
%
\end{proof}

This lemma shows that the map $\tau$ fibres: it gives us another way to write $G$ as a free-by-cyclic group. Note that the rank of the free group may have changed, but since there is still a centre, the defining outer automorphism must still be periodic. 
(Sometimes, though not always it becomes periodic as an automorphism -- for example, using this construction it becomes apparent that the rank three free-by-cyclic group defined using the automorphism $a \mapsto b^{-1}c, b \mapsto a^{-1}c, c \mapsto c$ is isomorphic to $F_2 \times \Z$.)

By design, this new presentation as a free-by-cyclic group is very well behaved when applying $\tau$: the image under $\tau$ of any element is the exponent of the (new) stable letter. This exponent is preserved by conjugation, and (by considering the stable letter as a root of $\delta$) by the section of $\Out_0(H)$. So if an automorphism whose outer class is an element of $\Out_0(G)$ does not preserve the exponent on the stable letter, writing it in the normal form for a semidirect product will involve a non-trivial element of the $\Z^k$ subgroup given in the decomposition of Theorem~\ref{thm:Levitt_for_periodic}. Note that since the exponent is preserved by conjugation, this effect is constant across an outer class.

\begin{prop}
	\label{prop:periodic_mccool}
	Suppose that $G$ is a free-by-cyclic group that is virtually free-times-cyclic, and $\{G_i\}$ is a family of subgroups. Write $H$ for the quotient of $G$ by its centre, and let $H_i$ be the image of the subgroup $G_i$ under this quotient map. Let $s$ be the section of Theorem~\ref{thm:Levitt_for_periodic}. Then \[\Out_0(G) \cap \MC(G,\{G_i\})=(\Z^k \cap \MC(G,\{G_i\}))\cdot s(\Out_0(H) \cap \MC(H,\{H_i\})).\] In particular, it is finitely generated if and only if $(\Out_0(H) \cap \MC(H,\{H_i\}))$ is.
\end{prop}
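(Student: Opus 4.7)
The plan is to leverage Theorem~\ref{thm:Levitt_for_periodic} by showing that the split short exact sequence
$$1 \to \Z^k \to \Out_0(G) \xrightarrow{p} \Out_0(H) \to 1$$
restricts to a split short exact sequence of McCool intersections
$$1 \to \Z^k \cap \MC(G,\{G_i\}) \to M_G \to M_H \to 1,$$
where $M_G = \Out_0(G) \cap \MC(G,\{G_i\})$ and $M_H = \Out_0(H) \cap \MC(H,\{H_i\})$. The product decomposition in the statement is then immediate, and finite generation of $M_G$ is equivalent to that of $M_H$ because the kernel, being a subgroup of $\Z^k$, is always finitely generated.

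First I would check that $p$ sends $M_G$ into $M_H$. Since every automorphism of $G$ preserves $Z(G)$, it descends via the quotient $q\colon G \to H$ to an automorphism of $H$; if a representative $\phi$ fixes each $G_i$ pointwise, then its descent fixes each $H_i = q(G_i)$ pointwise. The kernel of $p|_{M_G}$ is $\Z^k \cap M_G = \Z^k \cap \MC(G,\{G_i\})$ directly from the definitions.

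Second, and this is the technical heart, I would show that $\sigma(M_H) \subseteq M_G$, i.e.\ that the section sends McCool elements to McCool elements. Given $\bar\Phi \in M_H$ represented by $\bar\phi$ fixing each $H_i$ pointwise, the property that $\sigma$'s image consists of automorphisms fixing $Z(G) = \langle\delta\rangle$ yields a lift $\phi \in \Aut(G)$ with $\phi(\delta) = \delta$ and descending to $\bar\phi$. For each $g \in G_i$ one then has $\phi(g) = g \cdot \delta^{n_i(g)}$ for a homomorphism $n_i\colon G_i \to \Z$ vanishing on $G_i \cap \langle\delta\rangle$. The task is to show these $n_i$ can be simultaneously trivialised by composing $\phi$ with a suitable inner automorphism of $G$, using the explicit form of Levitt's section relative to the Bass--Serre tree of the GBS structure on $G$.

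Combining these two observations with the semidirect product decomposition $\Out_0(G) = \Z^k \rtimes \sigma(\Out_0(H))$ yields the product: given $\Phi \in M_G$, write $\Phi = \alpha \cdot \sigma(p(\Phi))$ with $\alpha \in \Z^k$ and $p(\Phi) \in M_H$ (by the first step); then $\sigma(p(\Phi)) \in M_G$ (by the second step), so $\alpha = \Phi \cdot \sigma(p(\Phi))^{-1}$ lies in $\Z^k \cap M_G$. The restricted sequence is split by $\sigma$, has finitely generated abelian kernel, and this gives the biimplication on finite generation. The main obstacle is the second step: verifying that the section preserves the McCool property, which requires careful bookkeeping of how Levitt's lift interacts with the family $\{G_i\}$ in the GBS description of $G$, and in particular the use of inner automorphisms (rather than elements of $\Z^k$, which act trivially on subgroups contained in a vertex group) to kill the obstruction $n_i$.
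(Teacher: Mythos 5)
Your skeleton is the same as the paper's: restrict Levitt's split exact sequence from Theorem~\ref{thm:Levitt_for_periodic} to the McCool intersections, with the whole weight of the argument resting on showing that the section carries $\Out_0(H)\cap\MC(H,\{H_i\})$ into $\MC(G,\{G_i\})$. But you leave exactly that step as a ``task'', and the mechanism you propose for it --- killing the obstruction homomorphisms $n_i\colon G_i\to\Z$ by composing with an inner automorphism --- cannot work. If $\phi(g)=g\delta^{n(g)}$ and you post-compose with $\Ad(x)$, you get $g\mapsto g^x\delta^{n(g)}$, and applying the conjugation-invariant homomorphism $\tau$ of Lemma~\ref{lem:tau_has_nice_kernel} to the requirement $g^x\delta^{n(g)}=g$ forces $n(g)\tau(\delta)=0$, i.e.\ $n(g)=0$ already. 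So an inner correction can remove the central defect only when there is no defect to remove; if some $n_i$ were genuinely nonzero, no inner automorphism would help, and your argument would stall.

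The missing idea is the one the paper extracts from Lemma~\ref{lem:tau_has_nice_kernel}: re-fibre $G$ as free-by-cyclic using $\ker(\tau)$, so that $\tau$ becomes the exponent of a stable letter $t$ which is a root of $\delta$; this exponent is preserved by conjugation and by the section of $\Out_0(H)$, and $\tau(\delta)\neq0$. The same computation as above then shows directly that the obstruction vanishes identically ($\tau(\phi(g))=\tau(g)$ forces $n(g)=0$), so the section automatically preserves the McCool property and no correction is needed. The identical $\tau$-exponent argument is also what the paper uses for the other half of the decomposition (showing that in the normal form $z\psi$ of an element of $\Out_0(G)\cap\MC(G,\{G_i\})$ the factor $z$ lies in $\Z^k\cap\MC(G,\{G_i\})$), a point your step (1) gets by a different but equally valid route via the kernel of $p$. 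Once the $\tau$-preservation input is supplied, the rest of your argument (the splitting, the finitely generated abelian kernel, and the biimplication on finite generation) goes through as you describe.
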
 

\begin{proof}
	Consider the split short exact sequence of Theorem~\ref{thm:Levitt_for_periodic}. The kernel of the quotient map applied to $\Out_0(G) \cap \MC(G,\{G_i\})$ will be $(\Z^k \cap \MC(G,\{G_i\}))$, a finitely generated abelian group. The image of $\Out_0(G) \cap \MC(G,\{G_i\})$ in $\Out_0(H)$ is contained in $\MC(H,\{H_i\})$, since subgroups which are conjugate in $G$ have conjugate images in $H$. To complete the proof, we must show that this accounts for all of $(\Out_0(H) \cap \MC(H,\{H_i\}))$. To do this consider the section: we need to show that every element lifts to an element of $\Out_0(G) \cap \MC(G,\{G_i\})$.
	
	For an element of $(\Out_0(H) \cap \MC(H,\{H_i\}))$ consider the collection of representatives $\alpha_i$, each fixing the subgroup $H_i$. Lemma 4.1 of \cite{LevittGBS} constructs the equivalent section for automorphism groups; one of the properties of the lift $\overline{\alpha}$ of $\alpha$ is that applying $\overline{\alpha}$ first does not alter $\tau$. So if $\alpha_i$ fixes $h$, and $g$ is any preimage of $h$, $\overline{\alpha}_i$ must send $g$ to $g\delta^k$. However, since $\tau$ must be unaltered, in fact $k=0$ and $g$ is fixed. So each $\overline{\alpha}_i$ fixes the subgroup $G_i$. The last thing to check is that they all represent the same outer automorphism. This is the case since inner automorphisms lift to inner automorphisms (by any preimage of the conjugator, as they differ by a central element). So any (indeed every) $\overline{\alpha}_i$ represents an element of $\MC(G,\{G_i\})$, which is contained in $\Out_0(G)$ since it is the image of the section of Theorem~\ref{thm:Levitt_for_periodic}.
\end{proof} 

So to show that the McCool groups we are interested in are finitely generated, we need to show the same for the relevant McCool groups of free-by-finite groups. In our situation the edge groups are virtually $\Z^2$, and a power of a generator is central in the vertex group, so in $H$ the image of each edge group becomes virtually infinite cyclic.  In this case, we can understand the McCool groups.

\subsection{McCool groups for free-by-finite groups}

The purpose of this section is to study the groups $\MC(H,\{H_i\})$, which will complete our proof in the linear growth case.

\begin{prop}
\label{prop:McCool_of_free-by-finite}
	Suppose $H$ is virtually free  
	and $\{H_i\}$ is a finite collection of virtually infinite cyclic subgroups. Then $\MC(H,\{H_i\})$ is finitely generated.
\end{prop}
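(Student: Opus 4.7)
The plan is to reduce to McCool's theorem for free groups by passing to a characteristic finite-index free subgroup. Since $H$ is finitely generated and virtually free, it admits a characteristic finite-index free subgroup $F$ (for instance, the intersection of the finitely many index-$n$ subgroups for a suitable $n$), and we set $Q := H/F$, a finite group. If $F$ has rank at most one then $H$ is itself virtually cyclic so $\Out(H)$ is virtually cyclic and the conclusion is immediate; hence assume $F$ has rank at least two. For each $i$, the intersection $H_i \cap F$ is a finite-index infinite cyclic subgroup of $H_i$, since $H_i$ is virtually $\Z$ and $F$ is torsion-free; let $w_i$ be a generator.

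The restriction map $\rho \colon \Aut(H) \to \Aut(F)$ is well-defined since $F$ is characteristic. Its kernel is finite: an automorphism fixing $F$ pointwise is determined by the choice of image for each coset representative, and the only flexibility lies in the finite group $C_H(F)$ (which is finite because $Z(F) = 1$ forces $C_H(F) \cap F = 1$). Let $K := \{\phi \in \Aut(H) : \phi|_{H_i} = \mathrm{id} \text{ for all } i\}$, so that the image of $K$ in $\Out(H)$ is exactly $\MC(H, \{H_i\})$, and the kernel of $K \to \MC(H, \{H_i\})$ is $K \cap \Inn(H) = \Inn_H(\bigcap_i Z_H(H_i))$, which is at most infinite cyclic. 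For $\phi \in K$ the restriction $\rho(\phi)$ fixes each $w_i$, so $\rho(K)$ is contained in $S := \mathrm{Stab}_{\Aut(F)}(\{w_1,\dots,w_k\})$. By McCool's theorem \cite{McCool1975}, the McCool group $\MC(F, \{w_i\}) \leq \Out(F)$ is finitely generated; since the kernel of $S \to \Out(F)$ is $\Ad_F(C_F(w_1,\dots,w_k))$ (a centraliser in the free group $F$, hence cyclic or trivial), $S$ itself is finitely generated.

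The main obstacle, and the crux of the argument, is that $\rho(K)$ is a priori only a subgroup of the finitely generated group $S$, and subgroups of finitely generated groups need not be finitely generated. To handle this I would show $\rho(K)$ has finite index in $S$: first, extending an element $\alpha \in S$ to an element of $\Aut(H)$ is controlled by the requirement that $[\alpha] \in \Out(F)$ normalise the finite subgroup $\overline{Q} \leq \Out(F)$ coming from the conjugation action of $H$ on $F$, and the preimage of $N_{\Out(F)}(\overline{Q})$ under $\Aut(F) \to \Out(F)$ meets $S$ in a finite-index subgroup of $S$ because passing to a finite-index subgroup of $\MC(F,\{w_i\})$ allows us to centralise (rather than merely normalise) $\overline{Q}$; second, once an extension $\phi$ of $\alpha$ exists, further modifying $\phi$ to fix the finite torsion part of each $H_i$ pointwise imposes only a further finite-index condition. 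Together these cut out a finite-index subgroup of $S$ lying inside $\rho(K)$, so $\rho(K)$ is finitely generated. Combined with finiteness of $\ker(\rho|_K)$, $K$ is finitely generated, and hence so is its quotient $\MC(H, \{H_i\})$.
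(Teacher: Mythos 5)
The proposal breaks down at its self-identified crux: the claim that $\rho(K)$ has finite index in $S=\mathrm{Stab}_{\Aut(F)}(\{w_1,\dots,w_k\})$. An automorphism $\alpha$ of $F$ extends to $H$ only if its outer class normalises (indeed, essentially centralises) the finite subgroup $\overline{Q}\leq\Out(F)$ induced by the conjugation action of $H$ on $F$; but the normaliser of a finite subgroup of $\Out(F)$ is in general an \emph{infinite-index} subgroup, and there is no mechanism by which passing to a finite-index subgroup of $S$ (or of $\MC(F,\{w_i\})$) forces its elements to normalise or centralise $\overline{Q}$. Your parenthetical justification --- that a finite-index subgroup of $\MC(F,\{w_i\})$ centralises $\overline{Q}$ --- is false in general: already for $F=F_2$ and $Q$ generated by the involution swapping a basis, the centraliser is infinite index in $\Out(F_2)\cong\GL_2(\Z)$, and intersecting with a large stabiliser $S$ does not repair this. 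So the reduction to McCool's 1975 theorem via a finite-index comparison with $S$ does not work, and the containment goes the unhelpful way: $\rho(K)$ is an a priori wild subgroup of $S$, exactly the situation you correctly flagged as the obstacle.

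The paper's proof confronts this head-on and needs a genuinely stronger input. It first shows (Proposition~\ref{prop:OutCentraliserEnough}) that the subgroup $\Aut_H(F)$ of automorphisms of $F$ commuting with $\Ad(H)$ up to $\Inn(F)$ extends to a finite-index subgroup of $\Aut(H)$; the group to be controlled is then (up to finite kernel and finite index) the subgroup of the centraliser $\Out_Q(F)$ that preserves the conjugacy classes of the $\langle w_i\rangle$. Finite generation of \emph{that} group is Theorem~\ref{thm:BFHcentraliser}, the Bestvina--Feighn--Handel result on centralisers of finite subgroups of $\Out(F_n)$ relative to conjugacy classes --- a deep theorem not recoverable from McCool's theorem by finite-index arguments. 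Your final steps (upgrading ``preserves the conjugacy class of $\langle w_i\rangle$'' to ``acts as conjugation on $H_i$'' using finiteness of $\Out$ of virtually cyclic groups) are close in spirit to the paper's use of Lemmas~\ref{lem:Out_virt_cyclic_finite} and~\ref{lem:normalisers_still_virt_cyclic}, but without the BFH input the core finite-generation claim is unsupported.
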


First we use a result which allows us to understand the outer automorphisms of the extension by considering the centraliser of the finite cyclic subgroup. 


\begin{prop}
	\label{prop:spotting_the_centraliser}
	Let $H$ be a group, and $F$ a normal subgroup of $H$ with trivial centre.  Let $\Ad(h)$ represent the automorphism of $F$ induced by conjugating by $h$. Let $\Aut_H(F)$ be the subgroup of $\Aut(F)$ that commutes with $\Ad(H)$ up to inner automorphisms. That, is the subgroup defined by \[\{ \alpha \in \Aut(F) | [\Ad(h),\alpha] \in \Inn(F) \quad \forall h \in H\}.\] Further, let $N$ be the subgroup of $\Aut(H)$ which preserves $F$ and all its cosets (that is, it acts trivially on the quotient $H/F$).
	
	Then the restriction to $F$ sends $N$ isomorphically to $\Aut_H(F)$.
\end{prop}

\begin{proof}


First we consider the image of the restriction map. Suppose $\alpha$ is an element of $N$, and consider its restriction to $F$. For all $f\in F$, and $h \in H$, $f(\Ad(h)\alpha) =(f^h)\alpha= (f\alpha)^{(h\alpha)} = f(\alpha \Ad(h\alpha))$. This gives that, as automorphisms of $F$, $\Ad(h\alpha)=\alpha^{-1}\Ad(h)\alpha$. Since $\alpha$ preserves cosets of $F$, $h^{-1}(h\alpha) = f$, for some $f \in F$. But then $\Ad(f)=\Ad(h^{-1}(h\alpha))=\Ad(h)^{-1}\alpha^{-1}\Ad(h)\alpha$: the restriction of $\alpha$ to $F$ satisfies the commutator property defining $\Aut_H(F)$, and so the image in $\Aut(F)$ lies in this subgroup.

Next we show that the restriction map is a surjection to $\Aut_H(F)$. To do this, we construct an automorphism of $H$ with a given image in $\Aut_H(F)$. For any $\alpha \in \Aut_H(F)$, we have $\alpha^{-1}\Ad(h)\alpha = \Ad(h)\Ad(f_{h,\alpha})$ by the defining commutator property, where $f_{h,\alpha}$ is an element of $F$ depending on both $h$ and $\alpha$. Since $F$ is centreless, it has a unique element inducing any inner automorphism -- $f_{h,\alpha}$ is well defined. Extend $\alpha$ to a function $\overline{\alpha}$ defined on all of $H$ by setting $h\overline{\alpha}$ to be $hf_{h,\alpha}$. (On $F$, since $\alpha^{-1}\Ad(h)\alpha = \Ad(h\alpha)$ for inner automorphisms, $h\alpha = hf_{h,\alpha}$, so the restriction is indeed $\alpha$.) To see $\overline{\alpha}$ is an endomorphism, we need to check that $hkf_{hk,\alpha} = hf_{h,\alpha}kf_{k,\alpha}$.

Consider the following diagram: the squares all commute by the definition of $\Aut_H(F)$, the left hand triangle is a consequence of $\Ad$ being a homomorphism, and we are interested in the right hand triangle, whose commutativity follows from 
chasing the diagram (noting that the top map is an isomorphism). This gives that $\Ad(hkf_{hk,\alpha}) = \Ad(hf_{h,\alpha}kf_{k,\alpha})$, and by normality of $F$ this is equal to $\Ad(hkf_{h,\alpha}^kf_{k,\alpha})$. That is, we have that the unique element of $F$ inducing the correct inner automorphism is $f_{hk,\alpha}=f_{h,\alpha}^kf_k$, with which we get that $hkf_{hk,\alpha} = hf_{h,\alpha}kf_{k,\alpha}$. 

\[\begin{tikzcd}[row sep=huge, column sep = huge]
F \arrow[pos=0.3]{d}{\Ad(h)} \arrow{r}[swap]{\alpha} \arrow[bend right=30, swap]{dd}{\Ad(hk)} &  F \arrow[swap, pos=0.7]{d}{\Ad(hf_{h,\alpha})} \arrow[bend left=30]{dd}{\Ad(hkf_{hk,\alpha})}\\
F \arrow[pos=0.3]{d}{\Ad(k)} \arrow{r}[swap]{\alpha} &  F \arrow[swap, pos=0.7]{d}{\Ad(kf_{k,\alpha})} \\
F \arrow{r}[swap]{\alpha} & F \end{tikzcd}\]

To see $\overline{\alpha}$ is surjective, note that it is surjective on $F$, and for general $h$, we have $h=(h(f^{-1}_{h,\alpha}\alpha^{-1}))\overline{\alpha}$. 
To see injectivity, suppose $h\overline{\alpha}=1$, so $hf_{h,\alpha} = 1$. In particular, this means $h$ is an element of $F$; but on $F$ $\overline{\alpha}$ agrees with $\alpha$, which is an automorphism. So $h=1$, and $\overline{\alpha}$ is an element of $\Aut(H)$, restricting to $\alpha$ on $F$ as claimed.




Finally, we show that the restriction map $N \to \Aut_H(F)$ is injective. Denote by $K$ the kernel of the map $\Ad: H \to \Aut(F)$. Since $F$ has no centre, $K \cap F$ is trivial.

Suppose $\alpha$ lies in the kernel of the restriction map, so it fixes every element of $F$. Then for all $f \in F, h \in H$, we have that $f^h \in F$, so $f^{h\alpha} = (f\alpha)^{h\alpha}=(f^h)\alpha = f^h$. So the actions of $h\alpha$ and $h$ on $F$ are the same: that is, $h\alpha$ and $h$ lie in the same $K$-coset. 

So for all elements $h \in H$ we have that  $(h\alpha)^{-1}h$ lies in $K$. Since both automorphisms preserve cosets of $F$, in fact $(h\alpha)^{-1}h$ lies in $F \cap K$. But these groups intersect trivially, so $h\alpha = h$ for all elements $h$, $\alpha$ must be the identity, and so the restriction map has trivial kernel. \qedhere
\end{proof} 

We now specialise this general result to our current case of virtually free groups.

\begin{cor}
\label{cor:OutCentraliserEnough}
	Let $H$ be a finitely generated virtually free group, that is not virtually cyclic, and $F$ a normal finite index subgroup (with rank at least 2) of $H$. Then the subgroup $\Aut_H(F)$ of $\Aut(F)$ is isomorphic to a finite index subgroup of $\Aut(H)$ which preserves $F$, and where the isomorphism is given by the restriction map.
\end{cor}

(This Corollary is similar in spirit to~\cite{McCoolFiniteExtensions}, which deals with centralisers in $\Aut(F)$; ours looks at the preimage of centralisers in $\Out(F)$, and deals simultaneously with the splitting and non-splitting cases.)

\begin{proof}
	By Proposition~\ref{prop:spotting_the_centraliser}, the subgroup $\Aut_H(F)$ of $\Aut(F)$ is isomorphic to the subgroup $N$ of $\Aut(H)$. This subgroup preserves $F$ and all its cosets, and the restriction to $F$ provides the isomorphism, as required. To finish the proof, notice that since $H$ is finitely generated and $F$ is a finite index subgroup, $N$ must be finite index in $\Aut(H)$.
\end{proof} 

We want not just the outer automorphism group but the McCool group. The relevant result about $\Out(F_n)$ is the following theorem of Bestvina, Feighn and Handel.

\begin{thm}[{\cite[Theorem 1.2(3)]{BestvinaFeighnHandelMcCoolGroups}}]
\label{thm:BFHcentraliser}
	Suppose $Q$ is a finite subgroup of $\Out(F_n)$, and $\Out_Q(F_n)$ is its centraliser. Let $K_1,\dots K_n$ be a collection of conjugacy classes of finitely generated subgroups of $F_n$. Then the subgroup of $\Out_Q(F_n)$ fixing each $K_i$ is VF (in particular, is finitely presented).
\end{thm}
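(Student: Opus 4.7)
The plan is to exhibit the group
\[
N := \Out_Q(F_n) \cap \bigcap_{i} \Out(F_n;\{K_i\})
\]
as acting cocompactly with finite point stabilisers on a contractible simplicial complex of finite dimension; a torsion-free finite-index subgroup then has a compact $K(\pi,1)$, giving that $N$ is VF.

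First, to deal with $Q$ alone, I would invoke Nielsen realisation for $\Out(F_n)$ (Zimmermann, Culler, Khramtsov): every finite $Q \leq \Out(F_n)$ lifts to an action of $Q$ by graph automorphisms on some marked graph $\Gamma$ with $\pi_1 \Gamma \cong F_n$, equivalently $Q$ fixes a point of Culler--Vogtmann space $CV_n$. The fixed locus $CV_n^Q$ is a nonempty contractible subcomplex (Krsti\'c--Vogtmann) of finite dimension, and the centraliser $\Out_Q(F_n)$ acts on it with finite point stabilisers and cocompactly on an equivariant spine; this settles the $K_i$-free case. Next, to incorporate the $K_i$, I would combine this with McCool's construction. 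After replacing $\{K_i\}$ by its $Q$-orbit (which only changes $N$ by finite index and does not affect the VF property), we may assume the family is $Q$-stable. The McCool group $\MC(F_n;\{K_i\})$ acts on the relative deformation space of $F_n$-trees in which each $K_i$ is elliptic -- an essentially contractible $\Out(F_n;\{K_i\})$-invariant subcomplex of $CV_n$ in the sense of Guirardel--Levitt -- and the natural candidate complex for $N$ is the intersection $\mathcal{Y}$ of this relative space with $CV_n^Q$. It is $N$-invariant, finite-dimensional, and has finite point stabilisers.

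The main obstacle is verifying that $\mathcal{Y}$ is contractible and that $N$ acts cocompactly on it (possibly after passing to a spine), simultaneously in the presence of the $Q$-equivariance condition and the $K_i$-ellipticity condition. For contractibility I would carry out the standard straightening and folding homotopies that prove contractibility of $CV_n^Q$ and of the relative deformation space, performed $Q$-equivariantly and in such a way that the $K_i$-ellipticity (or length-minimisation) is non-increasing: this is possible because $Q$ permutes the $K_i$-orbits, so a $Q$-equivariant fold is automatically compatible with the $K_i$-constraints and vice versa. For cocompactness, the task reduces to bounding the number of $Q$-invariant graphs of groups of bounded volume realising a prescribed pattern of $K_i$-ellipticity; this is a $Q$-equivariant upgrade of McCool's finiteness argument (and of the finiteness of reduced outer space modulo the mapping class group of the underlying graph), and should go through because $Q$ has only finitely many orbits on such patterns. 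Once both are established, the action of $N$ on a spine of $\mathcal{Y}$ is cocompact with finite stabilisers on a finite-dimensional contractible complex, so $N$ is VF and in particular finitely presented.
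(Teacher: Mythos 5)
Note first that the paper does not prove this statement at all: it is quoted, with citation, as Theorem~1.2(3) of Bestvina--Feighn--Handel's preprint, so there is no internal proof to compare yours against. What you have written is an attempt to reprove a deep external theorem, and as it stands it has a genuine gap.

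The $Q$-only part of your sketch is fine and classical: Nielsen realisation together with Krsti\'{c}--Vogtmann's equivariant outer space gives that $\Out_Q(F_n)$ acts cocompactly with finite stabilisers on a contractible fixed-point spine, hence is VF. The trouble is the incorporation of the $K_i$. Points of $CV_n$ are \emph{free} simplicial actions, so no nontrivial subgroup $K_i$ is elliptic in any point of $CV_n$; the ``relative deformation space of $F_n$-trees in which each $K_i$ is elliptic'' is therefore not a subcomplex of $CV_n$, and the proposed intersection $\mathcal{Y}$ with $CV_n^Q$ is empty as stated. If instead you pass to the honest deformation space of non-free trees with all $K_i$ elliptic, that space consists only of the trivial tree whenever some $K_i$ is filling (for instance $K_i$ of finite index in $F_n$, or generated by a filling element), which is the typical situation, yet the theorem still asserts something nontrivial there; your construction offers no complex at all for these central cases. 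Even where a nonempty candidate complex exists, the two points you defer --- contractibility of $\mathcal{Y}$ and cocompactness of the $N$-action --- are precisely the substance of any such proof, and gesturing at ``$Q$-equivariant folding'' and a ``$Q$-equivariant upgrade of McCool's finiteness argument'' does not supply them. The known arguments (McCool's peak reduction for finitely many conjugacy classes of elements, Guirardel--Levitt's JSJ-based treatment of McCool groups, and the Bestvina--Feighn--Handel proof cited here) are substantially more elaborate, which is exactly why the paper imports the statement rather than proving it.
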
	

Note that the conclusion we want is stronger: we want the action on a representative of $K_i$ to be by conjugation, not just sending it to a conjugate. However, as the relevant subgroups are infinite cyclic this is only a matter of passing to a finite index subgroup.

These theorems allow us understand the subgroup of outer automorphisms conjugating an element that lies in the finite index free subgroup; to extend the result to the full subgroups $H_i$, we need the following lemmas.

\begin{lem}
\label{lem:Out_virt_cyclic_finite}
Suppose $A$ is a virtually cyclic group. Then $\Out(A)$ is finite.
\end{lem}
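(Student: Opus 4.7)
The plan is to split on whether $A$ is finite or infinite. In the finite case there is nothing to prove, since $\Aut(A)$ is then already finite. So I would focus on the case where $A$ is infinite virtually cyclic. Here $A$ admits a unique maximal finite normal subgroup $F$ (necessarily characteristic), with quotient $Q = A/F$ isomorphic to either $\Z$ or to the infinite dihedral group $D_\infty$; this follows from the classical fact that $A$ acts properly cocompactly on a line, so $A/F$ embeds discretely into $\Isom(\R)$. A short direct calculation then shows $\Out(\Z) \cong \Z/2$ and $\Out(D_\infty) \cong \Z/2$, so in both cases $\Out(Q)$ is finite.

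Since $F$ is characteristic, every automorphism of $A$ preserves $F$ and descends to an automorphism of $Q$; inner automorphisms of $A$ descend to inner automorphisms of $Q$, so I obtain a well-defined homomorphism $\Out(A) \to \Out(Q)$ with finite target. It then suffices to show that the kernel of this map is finite.

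Any outer class in the kernel admits a representative $\phi \in \Aut(A)$ whose induced map on $Q$ is the identity; equivalently, $\phi(a) \in aF$ for every $a \in A$. The set of such $\phi$ is a subgroup of $\Aut(A)$ which surjects onto the kernel. Since $A$ is finitely generated and $F$ is finite, each such $\phi$ is determined by finitely many choices --- one element of the finite coset $a_iF$ for each generator $a_i$ --- so this subgroup, and hence the kernel, is finite. Combined with the finiteness of $\Out(Q)$, this gives that $\Out(A)$ is finite. I do not expect any serious obstacle: the only mildly non-trivial ingredient is the standard structure theorem for infinite virtually cyclic groups that pins down $Q$ as $\Z$ or $D_\infty$, and once that is in hand the remainder is a straightforward counting argument.
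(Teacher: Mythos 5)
Your proof is correct and follows essentially the same route as the paper: both arguments pass to the characteristic maximal finite normal subgroup, use that the quotient is $\Z$ or $D_\infty$ with finite outer automorphism group, and bound the kernel by the coset-counting argument on a finite generating set. The only cosmetic difference is that the paper organises the last step via the isomorphism theorems applied to $\Aut(A)$, whereas you work directly with the induced map $\Out(A)\to\Out(Q)$ and its kernel; the substance is identical.
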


See, for instance, \cite[Lemma 6.6]{MinasyanOsin2010} for a proof. 
The key fact from this lemma is that the inner automorphisms are finite index, so `most' automorphisms of a virtually cyclic group are conjugations.

\begin{lem}
\label{lem:normalisers_still_virt_cyclic}
Suppose $H$ is virtually free (of rank at least 2), and let $h$ be a non-trivial element of a finite index free subgroup $F$. Then $\langle h \rangle$ has finite index in its normaliser, which in particular is virtually cyclic.
\end{lem}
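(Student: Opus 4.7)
The plan is to compute the normaliser by intersecting with the finite-index free subgroup $F$, reduce to a calculation inside a free group, and then use the finite index to pass back.

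First I would observe that $N_H(\langle h\rangle)\cap F = N_F(\langle h\rangle)$, and since $[H:F]<\infty$ this intersection is finite index in $N_H(\langle h\rangle)$. So it suffices to show that $N_F(\langle h\rangle)$ is cyclic and contains $\langle h\rangle$ with finite index, and then the same will follow for $N_H(\langle h\rangle)$ by a quick index multiplication.

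Next I would compute $N_F(\langle h\rangle)$ inside the free group $F$. Writing $h = h_0^k$ with $h_0$ not a proper power in $F$, I claim $N_F(\langle h\rangle) = \langle h_0\rangle$. The key inputs are that centralisers in free groups are cyclic (so $C_F(h) = \langle h_0\rangle$), and that if $gh g^{-1} = h^n$, comparing cyclic lengths in $F$ forces $n = \pm 1$. The case $n = 1$ puts $g$ in $\langle h_0\rangle$ immediately; the case $n = -1$ would give $g^2 \in \langle h_0\rangle$, and the absence of torsion and of proper roots then forces $g \in \langle h_0\rangle$ after all (or yields a contradiction depending on parity). Hence $N_F(\langle h\rangle)$ is infinite cyclic, and $\langle h\rangle$ sits inside it with index $k$.

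Putting these together, $N_H(\langle h\rangle)$ contains the infinite cyclic subgroup $\langle h_0\rangle$ with index at most $[H:F]$, so it is virtually cyclic; and $\langle h\rangle$ has index at most $k \cdot [H:F]$ in $N_H(\langle h\rangle)$, so is of finite index. I do not expect any real obstacle here: the only step that takes any thought is ruling out the $n=-1$ case for the normaliser inside $F$, and this is entirely standard (e.g.\ via the fact that a free group contains no torsion and has unique roots).
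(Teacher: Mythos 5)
Your proposal is correct and follows essentially the same route as the paper: intersect $N_H(\langle h\rangle)$ with $F$, identify that intersection as the infinite cyclic group generated by the root of $h$, and multiply indices. The paper simply asserts that the intersection is cyclic generated by the root, whereas you also spell out why the normaliser in $F$ coincides with the centraliser (ruling out $ghg^{-1}=h^{-1}$); this is a standard fact and your justification of it is sound.
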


\begin{proof}
First consider the intersection $N_H(\langle h \rangle) \cap F$: this is an infinite cyclic group, generated by the root of $h$ (which we denote $\hat{h}$). This contains $\langle h \rangle$ with finite index. But $N_H(\langle h \rangle) \cap F$ itself is a finite index subgroup of $N_H(\langle h \rangle)$, which must again contain $\langle h \rangle$ with finite index.\qedhere

\end{proof}

We now combine these results to prove Proposition~\ref{prop:McCool_of_free-by-finite}.

\begin{proof}[Proof of Proposition~\ref{prop:McCool_of_free-by-finite}]
Let $F$ be a finite index normal subgroup of $H$. By Corollary~\ref{cor:OutCentraliserEnough}, it will suffice to show that the subgroup of $N$ (the isomorphic image of $\Aut_H(F)$ in $\Aut(H)$) 
that acts as conjugation on each subgroup $H_i$ is finitely generated. We use $A$ for this subgroup of $N$. 

Each subgroup $H_i$ is virtually $\Z$; in particular its intersection with $F$ is generated by a single element $h_i$. This intersection is preserved under conjugation by elements of $H_i$ (since $F$ is a normal subgroup of $H$): in particular $H_i$ is a subgroup of $N_H(\langle h_i \rangle)$. By Lemma~\ref{lem:normalisers_still_virt_cyclic}, since it contains $h_i$, it is finite index in this normaliser. 

Let $Q=\Ad(H)/\Inn(F) \cong H/FK$ be the subgroup of $\Out(F)$ induced by $H$, and denote by $\Out_Q(F)$ the centraliser of $Q$ in $\Out(F)$. This is the projection of $\Aut_H(F)$ to $\Out(F)$. By Theorem~\ref{thm:BFHcentraliser}, the subgroup $\Out_Q(F)$ preserving the conjugacy class of each $\langle h_i \rangle$ is finitely generated, so this is also true of the subgroup $A$ of $N$ 
Normalisers must be sent to normalisers, so $A$ sends $N_H(\langle h_i \rangle)$ to a conjugate of itself too. 

This normaliser is virtually cyclic, so by Lemma~\ref{lem:Out_virt_cyclic_finite} it has finitely many outer automorphisms. After composing with an inner automorphism we induce an automorphism of $N_H(\langle h_i \rangle)$, and we may restrict to those which induce an inner automorphism. This restriction gives a finite index subgroup of $A$, which acts as a conjugation on $N_H(\langle h_i \rangle)$, and in particular on the subgroup $H_i$. Repeating this for each subgroup $H_i$ (there are only finitely many) still defines a finite index subgroup, which is itself finitely generated.
\end{proof}

\begin{rem}
Notice that the ad-hoc arguments given in Proposition~\ref{prop:McCool_for_exponential} for the two cases that are not virtually cyclic can be viewed as a special case of the arguments used here for general periodic automorphisms. (Observe that $\PGL_2(\Z) \cong \Out(C_2 \ast C_2 \ast C_2)$, though the $\Out_0$ considered above would be a finite index subgroup isomorphic to $C_2 \ast C_2 \ast C_2$.) There the problem can be reduced to understanding McCool groups of free groups, allowing more complicated incident edge groups to appear while leaving the problem tractable.
\end{rem}

We are now in a position to prove one of our main theorems.

\linear*

\begin{proof}
The defining automorphism $\varphi$ has a power that is UPG and linearly growing, so it is a Dehn twist. Taking this power to define a normal subgroup $G_0$ of $G$, by Proposition~\ref{prop:canonical_tree_for_Dehn_twist}, $G_0$ has a canonical action on a tree $T$. Then by Proposition~\ref{prop:no_really_theres_an_action}, $G$ has a nearly canonical action on the same tree. The vertex stabilisers are free-by-cyclic groups which are virtually free-times-cyclic; edge stabilisers are free-by-cyclic groups that are virtually $\Z^2$ (see Proposition~\ref{prop:linear_nearly_canonical_tree}).

Analyse this action using Theorem~\ref{thm:BassJiang}. The quotient at (1) is finite since by Lemma~\ref{lem:stabs_are_free_by_cyclic} the quotient graph is. The quotient at (2) is also finitely generated. By Proposition~\ref{prop:linear_McCool_is_enough}, this will be finitely generated if and only if the relevant McCool groups are. After passing to a finite index subgroup, Proposition~\ref{prop:periodic_mccool} describes these McCool groups (up to finite index) by considering McCool groups of virtually free groups, arising by quotienting by the centre. Since the edge groups contain the centre of the vertex groups, their image under this quotient map is virtually infinite cyclic. Finally, Proposition~\ref{prop:McCool_of_free-by-finite} gives finite generation for these McCool groups, completing this part of the proof.

The edge groups are virtually $\Z^2$, and in particular virtually abelian, so by Proposition~\ref{prop:normalisers} the quotient at (3) is finitely generated. Edge and vertex groups are both (finitely generated free)-by-cyclic, so by Lemma~\ref{lem:centralisers_are_fg} the centralisers are finitely generated groups, and so is their quotient at (4). Finally, the quotient at (5) is a quotient of a finitely generated abelian group, so is itself finitely generated.

Putting this together, we see that $\Out(G)$ admits a finite index subgroup which is finitely generated, and so $\Out(G)$ itself is finitely generated, as claimed. 
\end{proof}

\section{Quadratic growth}

\subsection{Strategy}

The strategy of the proof of this section is much like the last: 
\begin{itemize}
	\item Start with a free-by-cyclic group, $G= F_3 \rtimes_{\Phi} \Z$, where $\Phi$ has quadratic growth,
	\item Consider a finite index subgroup, $G_0=F_3 \rtimes_{\Phi^r} \Z$, so that $\Phi^r$ is UPG,
	\item Find a good basis of $F_3$ for $\Phi^r$ and use this to construct a tree whose deformation space is left invariant by any automorphism of $G_0$, 
	\item Deduce that the (reduced) tree of cylinders, $T=T^*_c$, of this space is $G_0$-canonical, 
	\item Use Proposition~\ref{prop:no_really_theres_an_action} to deduce that $T$ is nearly $G$-canonical
	\item Show that $\Aut^T(G)$ is finitely generated, using Theorem~\ref{thm:BassJiang}, and conclude that $\Aut(G)$ is finitely generated.
\end{itemize}

We establish some notation. Given a group, $G$, a subgroup $H$ of $G$ and elements $g,h$ of $G$ we set: 
\begin{enumerate}[(i)]
	\item We write $g \sim h$ to denote that $g$ and $h$ are conjugate in $G$, and
	\item We write $g \sim_H h$ to denote that $g$ and $h$ are conjugate by an element of $H$ (even if $g,h$ might not themselves be elements of $H$)
\end{enumerate}

\subsection{Normal forms and a tree to act on}

First we equip ourselves with a useful representative of a UPG automorphism.

\begin{prop}
\label{prop:nice_basis}
	Suppose $\Phi$ is a UPG element of $\Out(F_3)$ of quadratic growth. Then there is a representative $\varphi \in \Phi$ and a basis $\{a,b,c\}$ of $F_3$ so that \[
\begin{array}{rcl}
& \varphi \\
a & \longrightarrow & a \\
b & \longrightarrow & b a^{-k} \\
c & \longrightarrow & h c g^{-1}, \\
\end{array}\]

where $k$ is non-zero and $h$ and $g$ are in $\langle a,b \rangle$. 
\end{prop}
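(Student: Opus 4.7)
The plan is to extract a layered normal form from the theory of UPG outer automorphisms, and then simplify it by composing with a suitable inner automorphism.

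The first step invokes the standard fact that every UPG outer automorphism admits a representative in ``triangular'' (or ``upper Nielsen'') form with respect to a suitable basis: there is $\varphi \in \Phi$ and a free basis $\{x_1, \ldots, x_n\}$ of $F_n$ so that $x_i \varphi = u_i x_i v_i$, with $u_i, v_i$ lying in the free factor $\langle x_1, \ldots, x_{i-1}\rangle$. This follows from the existence of a UPG improved relative train track representative (\cite{Bestvina2000}, Corollary~5.7.6 and the surrounding discussion), after choosing a basis compatible with the filtration by invariant free factors read off from the layered structure. Specialising to $n=3$, we obtain a basis $\{a,b,c\}$ with $a\varphi = a$, $b\varphi = a^{m} b a^{-\ell}$ for some integers $m,\ell$, and $c\varphi = h c g^{-1}$ for some $h,g \in \langle a,b\rangle$.

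The second step normalises the action on $b$. Composing $\varphi$ with the inner automorphism $\Ad(a^m)$ (which leaves the outer class $\Phi$ unchanged) replaces $\varphi$ by a representative for which $a\varphi = a$ still holds, $b\varphi = b a^{-k}$ with $k = m+\ell$, and $c\varphi = h' c (g')^{-1}$ where $h', g'$ are the images of $h,g$ under $\Ad(a^m)$. Since $a^m \in \langle a,b \rangle$, the new $h', g'$ again lie in $\langle a,b\rangle$, so the desired form is preserved. Relabelling gives the statement.

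The third step verifies $k \neq 0$ using the quadratic growth hypothesis. If $k = 0$, then $\varphi$ fixes both $a$ and $b$, so the restriction of $\varphi$ to $\langle a,b\rangle$ is the identity, and in particular $h$ and $g$ are $\varphi$-fixed. Iterating, $c\varphi^{n} = h^{n} c g^{-n}$, whose conjugacy length is linear in $n$. Since every element of $F_3$ is a word in $a, b, c$ and $\{a, b\}$ are fixed, every conjugacy class would then grow at most linearly under $\Phi$, contradicting the assumption that $\Phi$ has quadratic growth.

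The main obstacle is the first step: producing the triangular representative. This rests on the (non-trivial) improved relative train track machinery for UPG automorphisms. Once that normal form is in hand, the remaining steps are routine bookkeeping with inner automorphisms and a growth estimate. An alternative that bypasses the full train-track theorem would be an induction on rank using only the definition of UPG (unipotent image in $\GL_3(\Z)$) together with the rank-2 classification applied to an invariant rank-2 free factor, but identifying such an invariant factor and arranging that $h, g \in \langle a,b\rangle$ by Nielsen transformations alone is essentially the same content in disguise.
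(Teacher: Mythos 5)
Your proposal is correct and follows essentially the same route as the paper: both extract the layered/triangular normal form for UPG automorphisms from the Bestvina--Feighn--Handel relative train track machinery (the paper phrases this via the filtered graph and an invariant rank-2 free factor, you via the basis-level triangular form, which is the same content), and both rule out $k=0$ by observing it would force at most linear growth. The only quibble is a bookkeeping slip in your second step: conjugating $a^{m}ba^{-\ell}$ by $a^{m}$ yields $ba^{m-\ell}$, so $k=\ell-m$ rather than $m+\ell$ --- immaterial, since your final step shows the resulting exponent is nonzero regardless.
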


This is close to \cite[Proposition 5.9]{CashenLevittBNS}; we have more control over the images of the first two generators in exchange for less control over the final generator.

\begin{proof}
By Theorem 5.1.8 of \cite{Bestvina2000}, any UPG automorphism is represented by a homotopy equivalence on a graph, $G$, such that $G$ consists of edges, $E_1, \ldots, E_k$ and the homotopy equivalence maps $E_i$ to $E_i u_{i-1}$, where the $u_{i-1}$ are closed paths involving only the edges $E_1, \ldots, E_{i-1}$ ($u_{i-1}$ may be the trivial path). 

In particular, this implies that any UPG automorphism of $F_2$ has a representative, such that with  respect to some basis, $\{ a,b\}$, the automorphism fixes $a$ and sends $b$ to $b a^{-k}$ for some $k$. 

(Briefly, if the top edge, $E_k$, were separating, then the components on removing this edge would both be homotopic to circles, and then it is easy to see that the map is homotopic to the identity relative to the initial vertex of $E_k$. If $E_k$ is not separating, then removing $E_k$ leaves a graph, homotopic to a circle, on which the map is homotopic to the identity -- giving us the $a$ -- and the $E_k$ edge becomes the $b$ basis element. Note that the layered description, which is a consequence of the UPG property, does not allow ``inversions" of these various invariant circles.)

Now, if we are given a UPG automorphism, $\Phi$, of $F_3$, the above description implies that some rank 2 free factor is left invariant, up to conjugacy -- again, remove the top edge $E_k$. Each component of the complement is invariant under the map, and there must be one of rank 2. Moreover, the restriction of $\Phi$ to this invariant free factor is also UPG -- in fact the restriction of the map has a layered form as above.

This implies that there is a basis, $\{ a,b,c\}$ of $F_3$ and a representative $\varphi \in \Phi$ such that 
 \[
\begin{array}{rcl}
	& \varphi \\
	a & \longrightarrow & a \\
	b & \longrightarrow & b a^{-k} \\
\end{array}\]

But now, since the images of $a,b,c$ must also be a basis for $F_3$, the only possibility for the image of $c$ is $h c^{\pm 1} g^{-1}$ for some $g,h \in \langle a,b \rangle$. The fact that $\Phi$ is UPG (or using the description of the map) means that the image must be $h c g^{-1}$.

Finally, note that if $k$ were to be zero, then $\Phi$ would have linear growth, hence we may conclude that $k \neq 0$.  
\end{proof}

\begin{cor}
	\label{cor:tree_for_quadratic}
	Let $G=F_3 \rtimes_{\Phi} \Z$ be a free-by-cyclic group where $\Phi$ has quadratic growth. Then $G$ has a normal finite index subgroup $G_0=F_3 \rtimes_{\Phi^r} \Z$ with presentation:
	\[
	\begin{array}{rcl}
	G_0 & = & \langle a,b,c,s \ : \ a^s = a, b^s= ba^{-k}, c^s = hcg^{-1}  \rangle \\ \\
	 & = & \langle H , c \ : \ (sh)^c = sg \rangle, \text{ where } H= \langle a,b,s \rangle 
   \end{array}
	\]
	where $g,h \in \langle a,b \rangle$ and $k \neq 0$.
	
	Moreover, $G_0$ acts on a tree, $T_0$, with one orbit of vertices, and one orbit of edges such that the vertex stabilisers are conjugates of $H= \langle a,b,s \rangle$ and edge groups are conjugates of $\langle sg \rangle = \langle sh\rangle^c $. (See Figure~\ref{fig:quadratic1}.)
	
	\begin{figure}
	\centering
	\usetikzlibrary{decorations.markings}
	\tikzset{middlearrow/.style n args={4}{
		decoration={
  			markings,
  				mark=at position #1 with {\arrow{#2},\node[transform shape,#4] {#3};}},postaction={decorate}},
  			middlearrow/.default={.5}{>}{}{below}	}

	\begin{tikzpicture}[scale=2]
		\draw[line width=2pt,middlearrow={.55}{>}{$c~~$}{left,rotate=75}] (0,0) arc (0:350:4mm); \node at (-0.4,0.6) {$\langle sh \rangle^c = \langle sg \rangle$};
		\draw[fill,color=red] (0,0) circle [radius=0.1]; \node at (0.5,0) {$\langle a,b,s \rangle$};
	\end{tikzpicture}
	\caption{$T_0$, as described in Corollary~\ref{cor:tree_for_quadratic}}
	\label{fig:quadratic1}
	\end{figure}
\end{cor}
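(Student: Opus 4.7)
The plan is to derive the corollary in four steps, moving from the growth hypothesis on $\Phi$ to a concrete HNN presentation, and then reading off the Bass--Serre tree.

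First, I would produce the finite index subgroup $G_0$. By Definition~\ref{defn:UPG}, any polynomially growing outer automorphism has a power which is UPG, and this power can be chosen uniformly in the rank; applying this to $\Phi \in \Out(F_3)$ gives some $r$ so that $\Phi^r$ is UPG. Since taking powers does not change the polynomial degree, $\Phi^r$ is still quadratically growing. Setting $G_0 = F_3 \rtimes_{\Phi^r} \Z$ yields a normal subgroup of $G$ of index $r$ (it is the preimage of $r\Z$ under the projection $G \to \Z$).

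Second, I would apply Proposition~\ref{prop:nice_basis} to the UPG quadratic outer automorphism $\Phi^r$ to obtain a representative $\varphi \in \Phi^r$ and a basis $\{a,b,c\}$ of $F_3$ in the stated normal form, with $k \neq 0$ and $g,h \in \langle a,b \rangle$. Writing $s$ for the stable letter of $G_0$, the standard semidirect product presentation of $F_3 \rtimes_\varphi \langle s \rangle$ is exactly the first presentation displayed in the corollary.

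Third, I would recast this as an HNN extension. Setting $H = \langle a,b,s\rangle$, the first two relations $a^s=a$ and $b^s=ba^{-k}$ are internal to $H$ (and express it as $F_2 \rtimes \langle s\rangle$). The third relation $s^{-1}cs = hcg^{-1}$ rearranges as $sg = c^{-1}(sh)c$, i.e.\ $(sh)^c = sg$. To confirm this is a genuine HNN extension of $H$ I would observe that $sh$ and $sg$ both lie in $H$, have non-zero $s$-exponent, and therefore are infinite order elements generating isomorphic infinite cyclic subgroups of $H$; the assignment $sh \mapsto sg$ is a well-defined isomorphism between these. A Tietze-transformation argument then shows the two presentations define the same group.

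Fourth, I would read off the tree $T_0$ directly from Bass--Serre theory applied to this HNN presentation: one vertex orbit with stabiliser $H$, and one edge orbit with stabiliser $\langle sh\rangle$, which is conjugated by $c$ onto $\langle sg\rangle$. This is the tree depicted in Figure~\ref{fig:quadratic1}.

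The only genuinely non-routine step is the second one, and that work is already done in Proposition~\ref{prop:nice_basis}; the principal thing to verify by hand is the rearrangement that produces the HNN relation and the check that $sh$, $sg$ are infinite-order elements of $H$ so that an honest HNN extension results. Everything else is formal manipulation of presentations and an immediate application of Bass--Serre theory.
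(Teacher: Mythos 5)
Your proposal is correct and follows essentially the same route as the paper: pass to a UPG power (which remains quadratically growing), invoke Proposition~\ref{prop:nice_basis} for the normal form, rearrange the relation $c^s = hcg^{-1}$ into $(sh)^c = sg$ to exhibit $G_0$ as an HNN extension of $H = \langle a,b,s\rangle$, and take $T_0$ to be the associated Bass--Serre tree. The extra verifications you include (that $sh$ and $sg$ are infinite-order elements of $H$, so the HNN extension is genuine) are details the paper leaves implicit, and they are correct.
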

\begin{proof}
Every polynomially growing automorphism has a power which is UPG, and Proposition~\ref{prop:nice_basis} provides a good generating set and the corresponding presentation. The final relations in both presentations are equivalent, realising $G_0$ as an HNN extension of $\langle a,b,s\rangle$ with stable letter $c$, and $T_0$ is the corresponding Bass-Serre tree. 	
\end{proof}

\subsection{Invariance of the tree}

\begin{prop}
	Any automorphism of $G_0$ fixes the conjugacy class of $\langle a,b,s \rangle$. That is, the deformation space defined by the tree, $T_0$ from Corollary~\ref{cor:tree_for_quadratic} is invariant under the automorphisms of $G_0$. 
\end{prop}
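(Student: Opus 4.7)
My plan is to characterize $H$ up to $G_0$-conjugacy by an intrinsic property and then argue that automorphisms preserve this property. The starting point is the identity $H = N_{G_0}(\langle a,b\rangle)$. Since $\langle a,b\rangle$ is a rank-$2$ free factor of $F_3$ it is self-normalizing in $F_3$, and the presentation in Proposition~\ref{prop:nice_basis} shows that $\psi := \Phi^r$ preserves $\langle a,b\rangle$ exactly (not merely up to conjugation). Writing a general element of $G_0$ as $ws^n$ with $w \in F_3$, one computes $(ws^n)^{-1}\langle a,b\rangle(ws^n) = w^{-1}\psi^n(\langle a,b\rangle)w = w^{-1}\langle a,b\rangle w$, so $ws^n$ normalizes $\langle a,b\rangle$ if and only if $w \in \langle a,b\rangle$. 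This gives $N_{G_0}(\langle a,b\rangle) = \langle a,b,s\rangle = H$, so it suffices to show the $G_0$-conjugacy class of $\langle a,b\rangle$ is preserved by every $\Psi \in \Aut(G_0)$.

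Inside $F_3$, the subgroup $\langle a,b\rangle$ has a canonical description in terms of $\psi$: it is precisely the subgroup of elements of polynomial growth degree at most $1$ under $\psi$-iteration. Indeed, $\psi|_{\langle a,b\rangle}$ is a Dehn twist (hence linearly growing), whereas an induction using $\psi(c) = hcg^{-1}$ with $h,g \in \langle a,b\rangle$ shows that any conjugacy class involving $c$ has quadratic growth. By the Bestvina-Feighn-Handel theory of improved relative train tracks \cite{Bestvina2000}, this growth-rate filtration is canonical up to conjugation, and is moreover the same for $[\Phi^r]$ and $[\Phi^r]^{-1}$; so the $F_3$-conjugacy class of $\langle a,b\rangle$ depends only on the pair $\{[\Phi^r],[\Phi^r]^{-1}\} \subset \Out(F_3)$.

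To lift this to $\Aut(G_0)$ I would next show that $F_3$ is characteristic in $G_0$ and that any $\Psi \in \Aut(G_0)$ induces on $F_3$ an outer automorphism which conjugates $[\Phi^r]$ to $[\Phi^r]^{\pm 1}$. The latter is essentially automatic: if $\Psi(F_3)=F_3$ then the naturality identity $\Ad_{G_0}(\Psi(s))|_{F_3} = \Psi|_{F_3}\circ \Phi^r \circ \Psi|_{F_3}^{-1}$, combined with $\Psi(s) \in s^{\pm 1}F_3$ (coming from $\Psi$ acting as $\pm 1$ on $G_0/F_3 \cong \Z$), gives the claim in $\Out(F_3)$. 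Characteristicity of $F_3$ I would obtain by analysing the Bieri-Neumann-Strebel invariant of $G_0$: since $\Phi^r$ is polynomially growing of positive degree, every character $G_0 \to \Z$ with finitely generated kernel must vanish on $F_3$, so $F_3$ is the unique fibre subgroup. Combined with the growth-filtration statement and the normalizer identity, this proves the proposition.

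The main obstacle will be establishing the characteristicity of $F_3$ in $G_0$: the Klein bottle warning at the start of Section~3 reminds us that automorphisms of an extension need not restrict to the kernel, so genuine content is required here. The BNS route above is one option; an alternative would avoid $F_3$ altogether by using Lustig's uniqueness result \cite{Lustig2017} for quadratic UPG outer automorphisms in $\overline{CV_3}$, which yields a unique $[\Phi^r]$-fixed $F_3$-tree that extends canonically to a $G_0$-tree whose deformation space is automatically $\Aut(G_0)$-invariant.
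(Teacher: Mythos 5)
Your reduction to showing that the $G_0$-conjugacy class of $\langle a,b\rangle$ is preserved is sound (the identity $\langle a,b,s\rangle = N_{G_0}(\langle a,b\rangle)$ checks out), but the two steps you then rely on both have genuine gaps. First, the characterisation of $\langle a,b\rangle$ by growth degree is not correct as stated: conjugacy-growth degree is a class function, so the set of elements of degree at most $1$ is conjugation-invariant and contains, for instance, $cac^{-1}$; it is not a subgroup and certainly not $\langle a,b\rangle$. What you actually need is that $[\langle a,b\rangle]$ is the \emph{unique} conjugacy class of maximal $\psi$-invariant subgroups of rank at least $2$ with linearly growing restriction, and this requires an argument -- the degree-$\le 1$ filtration of \cite{LevittInequalities} can a priori contain several conjugacy classes (e.g.\ when $h\sim g$ the class of $c$ is $\psi$-periodic and generates a degree-$0$ subgroup not conjugate into $\langle a,b\rangle$), so uniqueness of the rank-$\ge 2$ piece is exactly the content you would have to supply. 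Second, the characteristicity of $F_3$ in $G_0$, which you correctly identify as the crux, is left unproved: the assertion that every character of $G_0$ with finitely generated kernel vanishes on $F_3$ is a nontrivial claim about the BNS invariant (note $b_1(G_0)$ can be $3$ here), and no argument is offered. The fallback via \cite{Lustig2017} does not close this either -- the introduction of this paper explicitly flags that it is \emph{not} obvious that the deformation space of the Lustig--Ye limit tree is canonical, precisely because one still needs to know that an arbitrary automorphism of $G_0$ carries $F_3$ to itself and conjugates $[\Phi^r]$ to $[\Phi^r]^{\pm1}$.

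The paper's proof is engineered to avoid both issues. It first observes that $C_{G_0}(s)=\langle a,bab^{-1},s\rangle$ has rank-$2$ free part, so its image is the centraliser of some $s^{\pm1}w$; by Theorem~\ref{Bestvina-Handel} only conjugates of powers of $s$ can centralise a rank-$\ge 2$ subgroup, and since $s$ has no roots one may normalise by an inner automorphism so that $s\mapsto s^{\pm1}$. The relation $b^s=ba^{-k}$ then forces the image $u$ of $a$ into $F_3$, and a short Stallings-graph argument with the images of $a$ and $bab^{-1}$ pins down $\langle u,v\rangle=\langle a,b\rangle$. No characteristicity of $F_3$ and no growth-rate machinery is needed. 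If you want to salvage your route, the step to concentrate on is the uniqueness of the invariant linearly growing rank-$2$ (free factor) system; the characteristicity of $F_3$ you could then bypass by first running the paper's centraliser-of-$s$ argument to normalise the image of $s$.
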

\begin{proof}
	Let $\psi$ denoted the automorphism of $\langle a,b,c \rangle$ induced by conjugation by $s$ and $\Psi$ the corresponding outer automorphism; as in Corollary~\ref{cor:tree_for_quadratic}, we have that: 
	
	$$
	\begin{array}{rcl}
		& \psi \\
		a & \longrightarrow & a \\
		b & \longrightarrow & b a^{-k} \\
		c & \longrightarrow & h c g^{-1}, \\
	\end{array}.
	$$
	
	Note that since $\Psi$ has quadratic growth (and does not have linear growth) it is therefore not a Dehn Twist as per Definition~\ref{DehnandUPG}. 
	Hence, by Theorem~\ref{definetwists}, the inequality in the Bestvina-Handel Theorem, Theorem~\ref{Bestvina-Handel}, is strict for $\Psi$. (One can make the same deduction from the inequalities provided by Proposition 4.2 of \cite{LevittInequalities}.)

	To summarise, for our $\Psi$ we have that: 
	$$
	\sum \max\{ \rank(\fp{\psi_i}) - 1, 0 \} \leq 1 < 2 = \rank(F_3) -1, 
	$$
	where the sum is taken over the isogredience classes in $\Psi$.
	
	Moreover, if we take $\psi_0=\psi$, then we know that $\psi$ fixes $\langle a, bab^{-1} \rangle$ and so
	
		$$
	\sum \max\{ \rank(\fp{\psi_i}) - 1, 0 \} = 1,
	$$
	
	and therefore this sum has exactly one non-zero term. In particular, this means that any automorphism, $\psi \Ad(w)$ which has a fixed subgroup that is not cyclic, is isogredient to $\psi$. The same conclusion holds if we replace $\psi$ (and $\Psi$) with $\psi^{\ell}$ (and $\Psi^{\ell}$) for some $0 \neq \ell \in \Z$. 
	
	Now let $\chi$ be an automorphism of $G_0$. First we will see that $\chi(s)$ is a conjugate of $s^{\pm1}$, and then we prove that in the case it is fixed or inverted, the subgroup $\langle a,b,s \rangle$ is preserved. 
	
	We start by observing that $\chi(s) \not\in \langle a,b,c \rangle$. This is because the subgroup $\langle a ,bab^{-1}, s \rangle \cong F_2 \times \Z$ is the centraliser of $s$ and so $\chi(\langle a ,bab^{-1}, s \rangle )$ is the centraliser of $\chi(s)$. But the centraliser of any element of $\langle a ,b, c \rangle$ is either cyclic or virtually $\Z^2$, so cannot contain a free subgroup of rank 2. Hence $\chi(s)=s^\ell w$ for some $0 \neq \ell$ and $w \in \langle a,b, c \rangle$.

	 Since $\chi(\langle a ,bab^{-1}, s \rangle ) \cong F_2 \times \Z$, we cannot have that $\chi(\langle a ,bab^{-1}, s \rangle )\cap \langle a,b,c \rangle$ is cyclic (or trivial).  However, $\chi(\langle a ,bab^{-1}, s \rangle ) $ is the centraliser of $s^{\ell}w$ and so $\chi(\langle a ,bab^{-1}, s \rangle ) \cap \langle a,b,c \rangle$ is exactly the fixed subgroup of $\psi^{\ell} \Ad(w)$. Hence, by the discussion above,  $\psi^{\ell} \Ad(w)$ and $\psi^{\ell}$ are isogredient. This implies that $s^\ell w$ and $s^{\ell}$ are conjugate in $G_0$ (since the centraliser of $\langle a,b,c \rangle$ in $G_0$ is trivial). Thus $\chi(s)$ is conjugate to $s^{\ell}$, which implies that $\ell = \pm 1$, as $s$ has no roots. Thus we conclude that $\chi(s)$ is conjugate to $s^{\pm 1}$ and so $\langle a ,bab^{-1}, s \rangle$ is sent to a conjugate by $\chi$.

	Therefore, up to composing $\chi$ with an inner automorphism of $G_0$, we may assume that $s$ is fixed or inverted by $\chi$, and we consider the images of $a$ and $b$. We write $u$ for the image of $a$ and $s^n v$ for the image of $b$, where $u,v$ are elements of $\langle a,b,c \rangle$. (Notice that the relation $b^s=ba^{-k}$ implies that the image of $a$ lies in this free group.)
	
	The image of $bab^{-1}$ is $vuv^{-1}$, so $\langle u,vuv^{-1} \rangle = \langle a,bab^{-1} \rangle$. In particular, this shows that $u$ and $v$ (by malnormality of free factors) are contained in $\langle a,b \rangle$.
	
	To see the other inclusion, notice that we also know that $\langle u,v \rangle$ contains $\langle a, bab^{-1} \rangle$. By considering the Stallings graphs (see~\cite{Stallings1983}) of both subgroups, this means it contains either $\langle a, bab^{-1} \rangle$ or $\langle a,b \rangle$ as a free factor.
	
	That is, the subgroup inclusion gives a graph morphism from the Stallings graph of $\langle a, bab^{-1} \rangle$ to that of $\langle u,v \rangle$ with respect to the basis $\{ a,b\}$. If this map is injective, then the Stallings graph of $\langle a, bab^{-1} \rangle$ is a subgraph and therefore $\langle a, bab^{-1} \rangle$ is a free factor of $\langle u,v \rangle$. If not, then the two vertices of the Stallings graph of  $\langle a, bab^{-1} \rangle$ are identified, and we must get that $\langle a,b \rangle$ is a free factor of, and hence must be equal to, $\langle u,v \rangle$. This is an easy version of the arguments in \cite{Ventura1997}, Theorem 1.7.
	
	 Since it has rank 2, this actually says $\langle u,v \rangle$  is equal to either $\langle a, bab^{-1} \rangle$ or $\langle a,b \rangle$; the first is impossible since it would imply that  $\langle u,v \rangle =\langle a, bab^{-1} \rangle =   \langle u,vuv^{-1} \rangle $, which cannot happen since the last subgroup does not contain $v$. 
	 
	 Hence, $\langle u,v \rangle = \langle a,b \rangle$ and  $\langle u,v,s \rangle = \langle a,b,s \rangle$. \end{proof}

\begin{cor}
	The (collapsed) tree of cylinders, $T^*_c$, of $T_0$ is $G_0$-canonical and hence nearly $G$-canonical.
\end{cor}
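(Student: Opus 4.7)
The plan is to use the preceding proposition (which shows the deformation space of $T_0$ is $\Aut(G_0)$-invariant) together with the tree-of-cylinders machinery from Subsection~\ref{treesofcylinders} and the lifting Proposition~\ref{prop:no_really_theres_an_action}. Since the tree of cylinders (collapsed, when necessary) depends only on the deformation space, one automatically obtains a $G_0$-canonical tree; the extension to a nearly $G$-canonical tree is then formal.

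First I would fix a family $\mathcal{E}$ of edge stabilisers and an admissible equivalence relation on it. The natural choice is to take $\mathcal{E}$ to be the conjugation-closed family of infinite cyclic subgroups of $G_0$ commensurable with some edge stabiliser of $T_0$ (equivalently, commensurable with a conjugate of $\langle sg\rangle$), and to declare $A\sim B$ when $A$ and $B$ are commensurable, equivalently when $\langle A,B\rangle$ is virtually abelian inside some vertex stabiliser. Condition (1) of \cite[Definition 3.1]{GuirardelLevittTreesOfCylinders} is immediate; condition (2) is trivial because we are working with a single commensurability class of $\Z$'s; and by \cite[Lemma 3.2]{GuirardelLevittTreesOfCylinders} condition (3) reduces to showing that $\langle A,B\rangle$ is elliptic in any tree with stabilisers in $\mathcal{E}$. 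This follows because $\langle A,B\rangle$ is virtually abelian, hence virtually $\Z$ or virtually $\Z^2$, and such subgroups of $G_0$ are contained in a vertex stabiliser of $T_0$ (using Lemma~\ref{lem:stabs_are_free_by_cyclic} together with one-endedness of $G_0$). Sandwich-closure is trivial here, so \cite[Corollary 5.8]{GuirardelLevittTreesOfCylinders} applies.

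With admissibility in hand, \cite[Corollary 5.6]{GuirardelLevittTreesOfCylinders} gives that $T_c^\ast$ is canonically determined by the deformation space of $T_0$. Combined with the previous proposition (that every automorphism of $G_0$ preserves this deformation space), this yields a canonical action of $\Aut(G_0)$ on $T_c^\ast$, i.e.\ $T_c^\ast$ is $G_0$-canonical. Finally, since $G_0$ is a normal finite-index subgroup of $G$, Proposition~\ref{prop:no_really_theres_an_action} upgrades this directly to a nearly $G$-canonical action of $G$ on the same tree $T_c^\ast$, as claimed.

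The expected obstacle is not in the canonicity statement itself — which is largely an application of two results we already have — but in ensuring the choice of $\mathcal{E}$ makes the resulting $T_c^\ast$ useful for the Bass--Jiang analysis that will follow: one wants vertex stabilisers whose outer automorphisms and McCool subgroups can be controlled, and virtually $\Z^2$ edge stabilisers so that Proposition~\ref{prop:normalisers} applies. The choice above is calibrated so that cylinder vertices will carry virtually $\Z^2$ stabilisers (the centralisers in $G_0$ of the cyclic edge stabilisers of $T_0$), parallel to the linear growth case, which is the feature that makes the ensuing calculation tractable.
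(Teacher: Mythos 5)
Your overall strategy is the same as the paper's: the $G_0$-canonicity of $T_c^\ast$ follows from the invariance of the deformation space of $T_0$ (the preceding proposition) together with \cite[Corollary 5.6]{GuirardelLevittTreesOfCylinders}, and the ``nearly $G$-canonical'' upgrade is Proposition~\ref{prop:no_really_theres_an_action}. The paper's proof is very terse and does not even spell out which family $\mathcal{E}$ and admissible relation are used (that choice is only made explicit in Theorem~\ref{thm:tree_for_quadratic}, where $\mathcal{E}$ is taken to be maximal infinite cyclic subgroups and the relation is equality), so making a concrete choice and checking admissibility, as you do, is a reasonable thing to want to do.

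However, the admissibility check you give has a genuine gap. You claim that commensurable infinite cyclic subgroups $A,B$ of $G_0$ generate a virtually abelian group, and that virtually $\Z$ or virtually $\Z^2$ subgroups of $G_0$ are automatically contained in a vertex stabiliser of $T_0$; from this you conclude $\langle A,B\rangle$ is elliptic. Neither claim is justified, and the second one is actually false: in the case $sh\sim_{\langle a,b\rangle} sg$, Theorem~\ref{thm:tree_for_quadratic} exhibits $C(sh)\cong\Z^2$ acting on a line of $T_0$ (with $c$ translating along it), so a $\Z^2$ subgroup of $G_0$ can fail to be elliptic in $T_0$. The ellipticity criterion of \cite[Lemma 3.2]{GuirardelLevittTreesOfCylinders} is sufficient but not necessary, so you cannot conclude that the relation is inadmissible from this example — but your proof of admissibility is broken. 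The fix is easy and does not need ellipticity of $\langle A,B\rangle$: if $A\sim B$, $v\in\Fix(A)$, $w\in\Fix(B)$, then $C:=A\cap B$ is infinite cyclic of finite index in both, $C$ fixes $[v,w]$ pointwise, so each edge $e\subseteq[v,w]$ satisfies $C\le G_e$; since $G_e$ is infinite cyclic (it lies in $\mathcal{E}$) and contains the infinite cyclic $C$, $C$ has finite index in $G_e$, whence $G_e$ is commensurable with $A$ and $B$, verifying condition (3) directly. You should also note that your choice of $\mathcal{E}$ and the relation differ from the ones actually used in Theorem~\ref{thm:tree_for_quadratic}; the Corollary goes through for either choice, but if you want your $T_c^\ast$ to literally be the tree computed there, you should either match the paper's conventions or argue that the two choices produce the same collapsed tree of cylinders.
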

\begin{proof}
The fact that the deformation space of $T_0$ is invariant, gives us that the (collapsed) tree of cylinders, $T^*_c$ is canonical, see Subsection~\ref{treesofcylinders}. 

Then Proposition~\ref{prop:no_really_theres_an_action} gives us the second statement. 
\end{proof}

\subsection{Calculating the tree of cylinders, \texorpdfstring{$T_c$}{Tc}}

Our goal now is to calculate $T_c$. In order to do this, we actually modify the basis given by Proposition~\ref{prop:nice_basis}. The tree, $T_0$ from Proposition~\ref{cor:tree_for_quadratic} remains the same, but these modifications aid the calculation. 

Throughout this subsection, we are working with the subgroup 
$$G_0 =  \langle a,b,c,s \ : \ a^s = a, b^s= ba^{-k}, c^s = hcg^{-1}  \rangle.$$

First we observe that we can modify the elements $g,h$ from Proposition~\ref{prop:nice_basis}, and thus in the description of $G_0$. 

\begin{lem}
	\label{lem:h_and_g_are_not_unique}
	The choices of $g$ and $h$ in the statement of Proposition~\ref{prop:nice_basis} are not unique. In particular, if $sh \sim_{\langle a, b \rangle} sh'$ and $sg \sim_{\langle a, b \rangle} sg'$, then there exist $x,y \in \langle a,b \rangle$ such that if $c'=x^{-1} c y$, then the image of $c'$ under $\varphi$ is $h' c' g'^{-1}$.  
	
\end{lem}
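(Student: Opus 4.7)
The plan is to extract explicit conjugators from the two hypotheses, convert them into algebraic identities in $\langle a,b\rangle$ using the defining action of $s$, and then verify by direct computation that a suitable Nielsen modification of $c$ has the desired image under $\varphi$.

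First I would unpack the conjugacy hypotheses concretely. Pick $x \in \langle a,b\rangle$ with $x^{-1}(sh)x = sh'$ and $y \in \langle a,b\rangle$ with $y^{-1}(sg)y = sg'$. Using the defining identity $s^{-1}ws = \varphi(w)$ for $w \in F_3$, which rearranges to $x^{-1}s = s\varphi(x)^{-1}$, one can slide $s$ past the conjugator to rewrite $x^{-1}shx = s\varphi(x)^{-1}hx$, and similarly for the $g$-equation. Matching with $sh'$ and $sg'$ turns the two hypotheses into the clean identities
\[
h' = \varphi(x)^{-1}hx \qquad \text{and} \qquad g' = \varphi(y)^{-1}gy,
\]
with both equations taking place in $\langle a,b\rangle$, where $\varphi$ is known explicitly.

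Second, I would set $c' = x^{-1}cy$. Since $x,y \in \langle a,b\rangle$, the transformation $\{a,b,c\} \mapsto \{a,b,c'\}$ is a composition of two Nielsen moves, so $\{a,b,c'\}$ is again a basis of $F_3$ and it is legitimate to ask for a formula for $\varphi(c')$ in this new basis.

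Finally I would verify $\varphi(c') = h'c'g'^{-1}$ by direct substitution. Expanding the left hand side gives $\varphi(c') = \varphi(x)^{-1}\cdot hcg^{-1}\cdot\varphi(y)$, while expanding the right hand side using the formulas above and cancelling the $xx^{-1}$ and $yy^{-1}$ pairs produces exactly the same expression. The main obstacle is essentially only bookkeeping: keeping the conjugation convention in $c^s$ and the direction of the $s$-action on $F_3$ (so that $s$ acts as $\varphi$ rather than $\varphi^{-1}$) consistent. Once the conventions are fixed the computation is a one-liner, and morally the lemma is saying that the freedom in choosing $g$ and $h$ in Proposition~\ref{prop:nice_basis} is exactly the freedom to $\langle a,b\rangle$-conjugate $sh$ and $sg$ independently, absorbed into a Nielsen change of the third basis element.
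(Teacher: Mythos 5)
Your proposal is correct and is essentially the paper's own argument: both unpack the conjugacy hypotheses into the identities $h'=\varphi(x)^{-1}hx$ and $g'=\varphi(y)^{-1}gy$ (the paper writes these as $x^{-s}hx$ and $y^{-s}gy$ inside $G_0$) and then verify $\varphi(c')=h'c'g'^{-1}$ by the same direct cancellation. The only difference is cosmetic — you compute in $F_3$ applying $\varphi$ as a homomorphism, while the paper computes $s^{-1}c's$ inside the semidirect product.
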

\begin{proof}
	We will work in the corresponding free-by-cyclic group, $G_0$ from Proposition~\ref{cor:tree_for_quadratic} and its presentation. 
	
	Recall that  $G_0  =  \langle a,b,c,s \ : \ a^s = a, b^s= ba^{-k}, c^s = hcg^{-1}  \rangle$. It will be sufficient to show that $ s^{-1} {c'} s =    h' c' g'^{-1}$.
	
	Suppose $(sh)^x=sh'$, and $(sg)^y=sg'$, where $x$ and $y$ are elements of $\langle a,b \rangle$. Then put $c' = x^{-1} c y$. 
	
	We get that,
	{%
	\addtolength{\jot}{2pt} 
	\begin{align*}
		s^{-1} {c'} s &=  (x^{-1} c y)^s \\
		& =   x^{-s} hc g^{-1} y^s  \\
		&  =    (x^{-s} h x)  c'(y^{-1} g^{-1} y^s)  \\
		& =  s^{-1} (sh)^x  c' (sg)^{-y} s\\
		&  =  h' c' g'^{-1}. \qedhere
	\end{align*}
	}
\end{proof}

Note that each of $sh$ and $sg$ normalise $\langle a,b \rangle$. Moreover, they induce the same outer automorphism, and this is a Dehn Twist of $\langle a,b \rangle$. However, while $sh$ and $sg$ are conjugate in $G_0$ -- and so induce isogredient automorphisms of $\langle a,b,c \rangle$ -- they might not induce isogredient automorphisms of $\langle a,b \rangle$. 

One key point is that:

\begin{lem} The following are equivalent: 
	\begin{enumerate}[(i)]
		\item  $sh$ and $sg$ induce isogredient automorphisms on $\langle a, b \rangle$
		\item $sh \sim_{\langle a, b \rangle} sg$ 
		\item $sh \sim_{\langle a,b,s \rangle} sg$.
	\end{enumerate}
\end{lem}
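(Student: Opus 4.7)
The plan is to establish three implications. The inclusion $\langle a,b\rangle \subseteq \langle a,b,s\rangle$ makes (ii) $\Rightarrow$ (iii) immediate.

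For (iii) $\Rightarrow$ (ii), I would exploit the semidirect product structure $\langle a,b,s\rangle = \langle a,b\rangle \rtimes \langle s\rangle$: the projection to $\langle s\rangle \cong \Z$ gives an ``$s$-exponent'' homomorphism $\epsilon$, with $\epsilon(sh) = \epsilon(sg) = 1$. Given $u \in \langle a,b,s\rangle$ satisfying $u(sh)u^{-1} = sg$ and $\epsilon(u) = n$, set $u' = u(sh)^{-n}$. Since $sh$ commutes with its own powers, $u'(sh)(u')^{-1} = u(sh)u^{-1} = sg$; and $\epsilon(u') = 0$ forces $u' \in \langle a,b\rangle$, giving (ii).

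For (ii) $\Rightarrow$ (i): if $sg = w(sh)w^{-1}$ with $w \in \langle a,b\rangle$, then on $\langle a,b\rangle$ the automorphism induced by conjugation by $sg$ equals $\Ad(w)$ followed by conjugation by $sh$ followed by $\Ad(w)^{-1}$, which is precisely the definition of isogredience. For (i) $\Rightarrow$ (ii), if the automorphisms of $\langle a,b\rangle$ induced by $sh$ and $sg$ are isogredient via some $w \in \langle a,b\rangle$, then $sg$ and $w(sh)w^{-1}$ induce the same automorphism of $\langle a,b\rangle$, so $(sg)\bigl(w(sh)w^{-1}\bigr)^{-1}$ is an element of $\langle a,b,s\rangle$ that centralises $\langle a,b\rangle$.

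The main step, which I expect to be the only real (and mild) obstacle, is to show that $C_{\langle a,b,s\rangle}(\langle a,b\rangle) = 1$. A candidate $s^m v$ with $v \in \langle a,b\rangle$ centralises $\langle a,b\rangle$ precisely when $\Ad(v)|_{\langle a,b\rangle} = \varphi^m|_{\langle a,b\rangle}$, so $\varphi^m|_{\langle a,b\rangle}$ is inner in $\langle a,b\rangle$. But on the abelianisation $\Z^2$ of $\langle a,b\rangle$, $\varphi|_{\langle a,b\rangle}$ acts as a unipotent matrix with off-diagonal entry $-k \neq 0$ (coming from $b \mapsto ba^{-k}$), hence has infinite order in $\GL(2,\Z)$; since inner automorphisms act trivially on the abelianisation, this forces $m=0$, and then $v \in Z(\langle a,b\rangle) = 1$. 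Thus $sg = w(sh)w^{-1}$, completing (i) $\Rightarrow$ (ii).
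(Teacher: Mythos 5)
Your proof is correct and follows essentially the same route as the paper, which treats (i)$\Leftrightarrow$(ii) as clear and obtains (ii)$\Leftrightarrow$(iii) by exactly your trick of adjusting the conjugator by a power of $sh$ (the paper phrases this as rewriting $\langle a,b,s\rangle=\langle a,b,sh\rangle$ so that the new stable letter centralises the conjugated element). Your centraliser computation is slightly more than is needed: the element $(sg)\bigl(w(sh)w^{-1}\bigr)^{-1}$ has $s$-exponent $0$, so it already lies in $\langle a,b\rangle$ and the triviality of $Z(F_2)$ suffices.
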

\begin{proof}
	The first two are clearly equivalent, and notice that $\langle a,b,sh \rangle = \langle a,b,s \rangle = \langle a,b,sg \rangle$, which makes the second and third equivalent since we can choose the new generator so it centralises the conjugated element. 
\end{proof}

We will use the following result, to help us modify $g$ and $h$ as above. 

\begin{cor}[{\cite[Corollary 3.10]{DehnTwists}}]
	\label{cor:DehnTwists3_10}
	Let $\Psi \in \Out(F_n)$, $n\geq 2$, be a Dehn Twist outer automorphism fixing a conjugacy class. Then there is a $\psi \in \Psi$ with fixed subgroup of rank at least two fixing an element of that conjugacy class.
\end{cor}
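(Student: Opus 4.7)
My plan is to apply the Parabolic Orbits Theorem (Theorem~\ref{thm:parabolic_orbits}) to $\Psi$, producing a simplicial $F_n$-tree $T \in \overline{CV_n}$ whose vertex stabilisers are exactly the subgroups $\Fix \phi$ for $\phi \in \Psi$ with $\rank(\Fix \phi) \geq 2$, and whose edge stabilisers are maximal infinite cyclic. The idea is then to show that any $w$ lying in the given $\Psi$-fixed conjugacy class acts elliptically on $T$, so that it fixes some vertex $v$; the stabiliser of $v$ is of the form $\Fix \psi$ with $\rank(\Fix \psi) \geq 2$, and the inclusion $w \in \Fix \psi$ immediately gives both $w\psi = w$ and the desired rank bound. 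In particular we recover the element $w$ itself in the given conjugacy class, not merely a conjugate of it.

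To establish ellipticity I would argue as follows. Fix any $X \in CV_n$ with free action. Since $\Psi$ fixes the conjugacy class of $w$, $w\Psi^k$ is conjugate to $w$ for all $k$, so $l_X(w\Psi^k) = l_X(w)$ is bounded in $k$. On the other hand, by Theorem~\ref{dehnupg} the Dehn Twist $\Psi$ is linearly growing, so there exists $g \in F_n$ with $l_X(g\Psi^k) \to \infty$. The convergence $\Psi^k(X) \to T$ in $\overline{CV_n}$ is projective, so after rescaling by $1/l_X(g\Psi^k)$ the length functions $l_{\Psi^k(X)}$ converge to $l_T$. Comparing these witnesses yields $l_T(w) = \lim l_X(w\Psi^k)/l_X(g\Psi^k) = 0$, so $w$ acts elliptically on the simplicial tree $T$; since $w \neq 1$, it must fix a vertex.

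The main obstacle is this ellipticity step: turning the projective convergence in $\overline{CV_n}$ into the concrete conclusion $l_T(w) = 0$ requires selecting a normalisation and exhibiting a ``fast'' witness whose length diverges, which in turn uses that $\Psi$ is genuinely linearly growing rather than trivial. Once ellipticity is secured, the rest of the argument is a direct reading of the structural part of the Parabolic Orbits Theorem, and there is no need to analyse isogredience classes or the graph-of-groups description of $\Psi$ by hand.
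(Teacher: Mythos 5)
The paper does not actually prove this statement: it is quoted verbatim from Corollary~3.10 of \cite{DehnTwists}, whose argument works directly with the efficient graph-of-groups representative of the Dehn twist. Your proposal is a correct, self-contained alternative derivation from the Parabolic Orbits Theorem as stated here (Theorem~\ref{thm:parabolic_orbits}). The key step --- that a $\Psi$-periodic conjugacy class is elliptic in the limit tree $T$ --- is sound: writing the projective convergence as $\lambda_k l_{\Psi^k(X)} \to l_T$ for some scalars $\lambda_k$, the existence of a linearly growing element $g$ forces $\lambda_k \to 0$ (because $\lambda_k l_X(g\Psi^k)$ converges to the finite value $l_T(g)$ while $l_X(g\Psi^k)\to\infty$), whence $l_T(w)=\lim \lambda_k\, l_X(w\Psi^k)=\lim \lambda_k\, l_X(w)=0$. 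Two small points to tidy. First, your specific normalisation by $1/l_X(g\Psi^k)$ only recovers $l_T$ up to a positive scalar, and only after you know $l_T(g)>0$; it is cleaner to argue with the abstract scalars $\lambda_k$ as above, since the conclusion $l_T(w)=0$ is scale-invariant. Second, to pass from ``$w$ is elliptic in the simplicial tree $T$'' to ``$w$ fixes a vertex'' you should note that very small actions admit no inversions, so the fixed-point set of $w$, being a nonempty subtree, contains a vertex. Granting these, $w$ lies in some vertex stabiliser, which by Theorem~\ref{thm:parabolic_orbits}(ii) equals $\Fix\psi$ for some $\psi\in\Psi$ with fixed subgroup of rank at least two, as required.
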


\begin{lem}
	In $G_0$, the centraliser $C_{\langle a,b \rangle}(sh)$ has rank 0,1 or 2. If the rank is at least 1, then $sh \sim_{\langle a, b \rangle} sh'$ for some $h' \in C_{\langle a,b \rangle}(s) = \langle a, bab^{-1} \rangle$. The same is true for $g$. 
	
	Moreover, one of 	$C_{\langle a,b \rangle}(sh)$ and 	$C_{\langle a,b \rangle}(sg)$ has rank 0 (is the trivial group). 
\end{lem}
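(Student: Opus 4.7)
The rank bound is immediate: $C_{\langle a,b\rangle}(sh)$ equals the fixed subgroup of $\Ad(sh)|_{\langle a,b\rangle}$, and since $h \in \langle a,b\rangle$, this automorphism of $\langle a,b\rangle \cong F_2$ lies in the same outer class as the Dehn twist $\Ad(s)|_{\langle a,b\rangle}$. Theorem~\ref{Bestvina-Handel} applied in $F_2$ then bounds the rank by $2$.

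For the conjugation claim, I would work with the Bass-Serre tree $T$ of the HNN decomposition
\[\langle a,b\rangle = \langle a, bab^{-1}\rangle \ast_{\langle a \rangle}\]
with stable letter $b$, extended to an action of $H = \langle a, b, s\rangle = \langle a,b\rangle \rtimes \langle s\rangle$. Under this extended action, vertex stabilisers are conjugates of $\langle a, bab^{-1}, s\rangle \cong F_2 \times \Z$, edge stabilisers are conjugates of $\langle a, s\rangle \cong \Z^2$, and $s$ fixes the base vertex. Suppose there is a non-trivial $u \in C_{\langle a,b\rangle}(sh)$. Since the outer class of $\Ad(s)|_{\langle a,b\rangle}$ fixes $[u]$, Corollary~\ref{cor:DehnTwists3_10} provides a representative with rank-$\geq 2$ fixed subgroup containing an element of $[u]$; by Theorem~\ref{Bestvina-Handel}, only one isogredience class in this outer class attains rank $\geq 2$, so the fixed subgroup of this representative is an $\langle a,b\rangle$-conjugate of $\langle a, bab^{-1}\rangle$. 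Hence $u$ lies in such a conjugate and is elliptic in $T$. As $sh$ commutes with $u$, it preserves the subtree $\Fix(u)$; were $sh$ hyperbolic its axis would be contained in $\Fix(u)$, forcing $u$ to fix an infinite line of $T$ pointwise. But the pointwise stabiliser of any line in $T$ is contained in $\langle s\rangle$, since two adjacent $\Z^2$ edge stabilisers meet only in their common $\langle s\rangle$-factor (the free parts $\langle a\rangle$ and $\langle bab^{-1}\rangle$ are distinct maximal cyclic subgroups of $\langle a,b\rangle$). As $\langle s\rangle \cap \langle a,b\rangle = 1$, this contradicts $u \neq 1$, so $sh$ is elliptic. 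Since $s$ fixes the base vertex, a conjugator taking $sh$ into $\langle a, bab^{-1}, s\rangle$ may be chosen inside $\langle a,b\rangle$, yielding $w \in \langle a,b\rangle$ with $w^{-1}(sh)w = sh'$ and $h' \in \langle a, bab^{-1}\rangle = C_{\langle a,b\rangle}(s)$. The argument for $sg$ is identical.

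For the moreover clause, suppose for contradiction that both centralisers are non-trivial. Combining the preceding step with Lemma~\ref{lem:h_and_g_are_not_unique}, we may replace $c$ by a suitable $c' = x^{-1}cy$ so that in the resulting presentation $h, g \in \langle a, bab^{-1}\rangle$. Then $\varphi$ fixes both $h$ and $g$, so $\varphi^n(c) = h^n c g^{-n}$ grows at most linearly in $n$; together with $\varphi^n(a) = a$ and $\varphi^n(b) = ba^{-nk}$, this shows $\varphi$ is linearly growing. But $\varphi$ represents $\Phi^r$, which has the same growth type as $\Phi$, contradicting the standing assumption that $\Phi$ has quadratic growth. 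The main obstacle is the tree-theoretic step showing that a hyperbolic $sh$ cannot commute with any non-trivial element of $\langle a,b\rangle$; once the pointwise stabiliser of a line in $T$ is correctly identified, the rest of the argument goes through cleanly.
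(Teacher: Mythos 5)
Your proof is correct. The rank bound and the ``moreover'' clause follow the paper's argument exactly: Bestvina--Handel applied to the rank-two outer class of $\Ad(s)|_{\langle a,b\rangle}$, and the observation that once both $h$ and $g$ can be conjugated into $C_{\langle a,b\rangle}(s)$, Lemma~\ref{lem:h_and_g_are_not_unique} yields a basis in which all three generators grow at most linearly, contradicting quadratic growth of $\Phi$. Where you genuinely diverge is the middle claim. The paper stays algebraic: Corollary~\ref{cor:DehnTwists3_10} together with the uniqueness of the rank-$\geq 2$ isogredience class produces a non-trivial $w$ with $w^s=w$ and $(w^x)^{sh}=w^x$, and a two-line computation shows $s^{-1}x(sh)x^{-1}$ lies in $C_{\langle a,b\rangle}(w)=\langle w\rangle\leq C_{\langle a,b\rangle}(s)$, giving $sh\sim_{\langle a,b\rangle}sw^m$ directly. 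You instead realise the rank-two Dehn twist on the Bass--Serre tree of $\langle a,bab^{-1}\rangle\ast_{\langle a\rangle}$, use the same two inputs to show the commuting element $u$ is elliptic, and rule out hyperbolicity of $sh$ via pointwise stabilisers of lines. Both work; the paper's computation is shorter and needs no auxiliary tree, while your version explains geometrically why $sh$ is conjugate into $\langle a,bab^{-1},s\rangle$ (it is elliptic in the twist tree) and is closer in spirit to the parabolic orbits machinery used elsewhere in the paper. One small imprecision to fix: the free parts of two distinct adjacent edge stabilisers are \emph{conjugates} of $\langle a\rangle$ (or of $\langle bab^{-1}\rangle$), not literally $\langle a\rangle$ and $\langle bab^{-1}\rangle$; the trivial-intersection claim still holds because distinct edges of this tree have distinct stabilisers, which are maximal cyclic and hence malnormal in $\langle a,b\rangle$.
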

\begin{proof}
The first statement follows from the Bestvina-Handel Theorem, Theorem~\ref{Bestvina-Handel}.

For the second statement, we invoke Corollary~\ref{cor:DehnTwists3_10}, to say that if $C_{\langle a,b \rangle}(sh)$ is non-trivial, then there exists a non-trivial $w \in \langle a,b \rangle$ and an $x \in \langle a,b \rangle$ such that:
\[
\begin{array}{rcl}
(w^x)^{sh}  & =  & w^x \\
w^{s} & = & w.
\end{array}
\]
Here we are using Theorem~\ref{Bestvina-Handel} to say that since the underlying free group has rank 2, there is exactly one isogredience class with fixed subgroup of rank at least 2, and hence the  $\psi$ from Corollary~\ref{cor:DehnTwists3_10} is, without loss of generality, the automorphism induced by conjugation by $s$ (on $\langle a,b \rangle$). (It is more convenient for the following argument to write $w^x$ for the element fixed by the automorphism induced by $sh$.)

But these equations imply that, 
\[
w^{s^{-1} x (sh) x^{-1}} = w^{ x (sh) x^{-1}} = (w^x)^{(sh) x^{-1}} = (w^x)^{x^{-1}} = w. 
\]

Hence, as $w$ is non-trivial and both $w$ and $s^{-1} x (sh) x^{-1}$ are elements of $\langle a,b \rangle$, we get that $s^{-1} x (sh) x^{-1} \in \langle w \rangle \leq C_{\langle a,b \rangle}(s)$, and hence $sh\sim_{\langle a, b \rangle} sw^m$, for some $m \in \Z$ (without loss of generality, we can assume $w$ is not a proper power, and so generates its own centraliser in $\langle a,b \rangle$). The same calculation gives the result for $g$. 

Finally, notice that if both $h, g \in C_{\langle a,b}(s)$, then $\Phi$ has linear growth. Thus, via Lemma~\ref{lem:h_and_g_are_not_unique}, we deduce that one of $C_{\langle a,b \rangle}(sh)$ and 	$C_{\langle a,b \rangle}(sg)$ has rank 0.
\end{proof}

\begin{rem}
	\label{rem:trivial_centraliser}
	Given this result, we shall henceforth assume that $C_{\langle a,b \rangle}(sg)$ is the trivial group. (Note that $h$ and $g$ can be interchanged by replacing $c$ with $c^{-1}$ so there is no loss of generality in assuming this.)
\end{rem}

We also record that, 

\begin{lem}
\label{lem:normaliser_is_centraliser}
Let $G$ be a free-by-cyclic group, with stable letter $s$. Any subgroup $\langle s^m w \rangle$, with $m \neq 0$, has centraliser and normaliser equal.
\end{lem}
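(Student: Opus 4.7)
The plan is to exploit the fact that conjugation in a free-by-cyclic group preserves the exponent of the stable letter. Since $C_G(H) \leq N_G(H)$ always holds for a subgroup $H$, it suffices to prove the reverse inclusion when $H = \langle s^m w \rangle$ with $m \neq 0$.

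First I would take $x \in N_G(H)$. Then $x^{-1}(s^m w) x \in \langle s^m w \rangle$, so there exists $k \in \Z$ with
\[
x^{-1}(s^m w) x = (s^m w)^k.
\]
Next, I would use the quotient map $G = F_n \rtimes \langle s \rangle \to \Z$ sending $s \mapsto 1$ and killing $F_n$. This homomorphism sends every element to its $s$-exponent, and is invariant under conjugation. Applying it to both sides of the displayed equation gives $m = mk$, and since $m \neq 0$ this forces $k = 1$. Therefore $x^{-1}(s^m w)x = s^m w$, which says $x \in C_G(s^m w) = C_G(H)$, as required.

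There is no real obstacle here: the argument is a one-line application of the obvious homomorphism $G \to \Z$, together with the fact that a cyclic subgroup $\langle y \rangle$ with $y$ of infinite order is normalised by $x$ iff $x^{-1}yx = y^{\pm 1}$, and the $\pm$ sign is pinned down to $+$ by the $s$-exponent. I would just state this as a short proof rather than introduce any additional machinery.
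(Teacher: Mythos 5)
Your proof is correct and is essentially the paper's argument: both reduce to the observation that conjugation preserves the exponent sum of the stable letter (via the quotient $G \to \Z$ killing $F_n$), which rules out inverting the generator $s^m w$ when $m \neq 0$. The only cosmetic difference is that the paper first notes the conjugate must be $(s^m w)^{\pm 1}$ because conjugation induces an automorphism of the cyclic subgroup, whereas you pin down the exponent $k$ directly; both are fine.
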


\begin{proof}
Notice that conjugation by any element of the normaliser induces an automorphism of $\langle s^m w \rangle$, and in particular either preserves the generator (in which case it is an element of the centraliser) or inverts it. However, conjugating cannot affect the exponent sum of the stable letter $s$, and so this last case does not arise. 
\end{proof}

Since there is only one orbit of edges, we can understand the cylinders by understanding the normaliser of any edge stabiliser. Since the edges are stabilised by infinite cyclic groups of the kind discussed in Lemma~\ref{lem:normaliser_is_centraliser}, this is equivalent to understanding their centralisers.

\begin{thm}
	\label{thm:tree_for_quadratic}
	Let $G_0  =  \langle a,b,c,s \ : \ a^s = a, b^s= ba^{-k}, c^s = hcg^{-1}  \rangle$, and $T_0$ be the Bass-Serre tree on which $G_0$ acts via the HNN decomposition,  $G_0 = \langle H , c \ : \ (sh)^c = sg \rangle$, where $H= \langle a,b,s \rangle$. 
	
	Moreover, assume that $C_{\langle a,b \rangle}(sg)$ is the trivial group, as in Remark~\ref{rem:trivial_centraliser}.

	We form the tree of cylinders, $T_c$, and collapsed tree of cylinders $T_c^*$ taking maximal infinite cyclic groups to be the family $\mathcal{E}$ and equality to be the admissible equivalence relation.
	
	\begin{itemize}
		\item If $sh \not\sim_{\langle a, b \rangle} sg$, then $T_c^*=T_0$, or $T_c=T_c^*$ is simply a subdivision of $T_0$.
		\item If $sh \sim_{\langle a, b \rangle} sg$, then $T_c=T_c^*$ has one edge orbit, with infinite cyclic stabilisers, conjugates of $\langle sh \rangle$, and two vertex orbits, with stabilisers conjugates of $\langle a,b,s \rangle$ and $C(sh) \cong \Z^2$. 
	\end{itemize}
	
\end{thm}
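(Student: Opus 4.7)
The plan is to identify the cylinder $Y_0$ containing the base HNN edge $e_0$ of $T_0$ (which has stabiliser $\langle sh\rangle$) and then read off $T_c$ and $T_c^*$ directly. First I would check that $\mathcal{E}$ (maximal infinite cyclic subgroups) together with the equality relation is admissible, and that the edge stabilisers of $T_0$, all conjugates of $\langle sh\rangle$, lie in $\mathcal{E}$; this uses that any element of $G_0$ with $s$-exponent $\pm 1$ has no proper roots. Since there is a single $G_0$-orbit of edges, $Y_0$ is precisely the set of edges whose stabiliser equals $\langle sh\rangle$, which by Lemma~\ref{lem:normaliser_is_centraliser} coincides with $C_{G_0}(sh)\cdot e_0$. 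So the whole problem reduces to computing $C_{G_0}(sh)$.

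The key structural step is an HNN normal form argument showing that any element of $C_{G_0}(sh)\setminus H$ can be reduced, modulo $\langle sh\rangle$, to the form $cy_0^{-1}$ with $y_0\in\langle a,b\rangle$ satisfying $y_0^{-1}(sh)y_0 = sg$. In particular, a centralising element outside $H$ exists exactly when $sh \sim_{\langle a,b\rangle} sg$ (using the earlier equivalence between conjugacy in $\langle a,b\rangle$ and in $H$). This is the main technical step and the principal obstacle; after this, the case analysis unfolds mechanically.

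In the non-conjugate case, $C_{G_0}(sh) = C_H(sh)$. If $C_{\langle a,b\rangle}(sh)$ is trivial then $C_H(sh) = \langle sh\rangle$, so $Y_0 = \{e_0\}$; the tree of cylinders simply adds a new $\langle sh\rangle$-stabilised midpoint to each edge, all new edges are $\langle sh\rangle$-stabilised and thus in $\mathcal{E}$, no collapsing occurs, and $T_c = T_c^*$ is a subdivision of $T_0$. If instead $C_{\langle a,b\rangle}(sh)=\langle w\rangle$ is nontrivial, then $C_H(sh) = \langle w, sh\rangle \cong \Z^2$ and $Y_0$ is the star at $v$ with leaves $w^n cv$. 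In $T_c$, the new cone vertex $Y$ has stabiliser $\Z^2$; the edge from $v$ to $Y$ has stabiliser $H\cap\Z^2 = \Z^2$, not in $\mathcal{E}$, and so is collapsed, while each edge from $w^n cv$ to $Y$ has stabiliser $cHc^{-1}\cap\Z^2 = \langle sh\rangle$, which lies in $\mathcal{E}$ and is preserved. The sub-lemma needed here is $w\notin cHc^{-1}$, which I would derive from the observation that $c^{-1}wc \in H$ would force $c^{-1}wc \in C_H(sg) = \langle sg\rangle$ and hence $w \in \langle sh\rangle$, contradicting the choice of $w$. Collapsing the $\Z^2$-edge merges $Y$ back into $v$ and recovers $T_0$, giving $T_c^* = T_0$.

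In the conjugate case, pick $y_0\in\langle a,b\rangle$ with $y_0^{-1}(sh)y_0 = sg$; then $cy_0^{-1}$ centralises $sh$ and lies outside $H$. Since both $C_{\langle a,b\rangle}(sh)$ and $C_{\langle a,b\rangle}(sg)$ are trivial here, $C_{G_0}(sh) = \langle sh, cy_0^{-1}\rangle\cong\Z^2$. The cylinder $Y_0$ is then the line along which $cy_0^{-1}$ translates, with vertices $(cy_0^{-1})^n v$; in $T_c$ a single new cone vertex $Y$ with stabiliser $\Z^2$ is attached to each such boundary vertex by an edge of stabiliser $((cy_0^{-1})^n H (cy_0^{-1})^{-n})\cap\Z^2 = \langle sh\rangle$ (using that $cy_0^{-1}$ commutes with $sh$). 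All edges lie in $\mathcal{E}$, so no collapsing occurs and $T_c = T_c^*$ has exactly the two vertex orbits (conjugates of $H$ and of $C(sh)\cong\Z^2$) and the single edge orbit claimed.
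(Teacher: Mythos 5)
Your proposal is correct, and it rests on the same reduction as the paper's proof: since $T_0$ has a single edge orbit and normalisers of the cyclic edge stabilisers coincide with centralisers (Lemma~\ref{lem:normaliser_is_centraliser}), the cylinder through $e_0$ is exactly $C_{G_0}(sh)\cdot e_0$, so everything hinges on computing $C_{G_0}(sh)$. Where you diverge is in how that computation is done. You attack it algebraically, via Britton's lemma in the HNN presentation $\langle H, c : (sh)^c = sg\rangle$, showing that a centralising element outside $H$ forces $sh\sim_{\langle a,b\rangle} sg$ and, in that case, that $C_{G_0}(sh)=\langle sh, cy_0^{-1}\rangle\cong\Z^2$. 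The paper instead argues geometrically: it equivariantly orients $T_0$, observes that triviality of $C_{\langle a,b\rangle}(sg)$ (hence $C_H(sg)=\langle sg\rangle$) forbids a cylinder from containing two incoming edges at a vertex, and then reads off that cylinders are stars of outgoing edges (non-conjugate case) or lines (conjugate case), deducing the centraliser from the cylinder rather than the other way round. Your route makes the centraliser explicit up front at the cost of a normal-form induction; the paper's orientation trick avoids that bookkeeping. Your treatment of the collapsed tree (the $\Z^2$ edge from $v$ to the cone point being collapsed, and the verification that $w\notin cHc^{-1}$ so the outer edges retain stabiliser $\langle sh\rangle$) matches the paper's conclusion exactly. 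One small imprecision: the assertion that every element of $C_{G_0}(sh)\setminus H$ reduces ``modulo $\langle sh\rangle$'' to $cy_0^{-1}$ is not literally true of, say, $(cy_0^{-1})^2$; the correct statement is that the Britton reduction peels off one $c$-syllable at a time, each step producing a conjugator of $sh$ to $sg$ within $\langle a,b\rangle$, so that $C_{G_0}(sh)$ is generated by $C_H(sh)$ together with one such element $cy_0^{-1}$. The conclusions you draw from the claim are the correct ones, so this is a matter of phrasing rather than a gap.
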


\begin{proof}
Since $T_0$ has one orbit of edges and one orbit of vertices, the tree of cylinders of $T_0$ will have two orbits of vertices -- one for the cylinders, and one for the $T_0$-vertices. 

Since our relation is equality, edge stabilisers in $T_0$ are conjugate to $sh$, and we have Lemma~\ref{lem:normaliser_is_centraliser}, we deduce that a cylinder is the orbit of an edge under the action of the centraliser of the edge stabiliser (in $G_0$). 

As $G_0$ acts without inversions on $T_0$, we may equivariantly orient the edges of $T_0$. A vertex stabiliser in $T_0$ acts on the incident edges with two orbits -- one orbit for the incoming edges, and one for the outgoing edges. 
	
Choose this orientation so that at the vertex stabilised by $\langle a,b,s \rangle$, the incoming edges have stabiliser conjugate (in $\langle a,b,s \rangle$) to $\langle sg \rangle$ and for the outgoing edges it is conjugate to  $\langle sh \rangle$.

The fact that $C_{\langle a,b \rangle}(sg)$ is the trivial group implies that  $C_{\langle a,b,s \rangle}(sg) = \langle sg \rangle$ and hence that no cylinder may contain two incoming edges at a vertex.

\textbf{\underline{Suppose $sh \not\sim_{\langle a, b \rangle} sg$: }}

If a cylinder contained both incoming and outgoing edges at a vertex, then (moving back to the vertex stabilised by $\langle a,b,s \rangle$) we would have $sh \sim_{\langle a,b,s \rangle} sg$, since acting on the edges conjugates the stabilisers. So if $sh \not\sim_{\langle a, b \rangle} sg$ (which is equivalent to $sh \not\sim_{\langle a,b,s \rangle} sg$), then no cylinder may contain both incoming and outgoing edges at a vertex. 

Thus if $sh \not\sim_{\langle a, b \rangle} sg$, all cylinders consist of a collection of outgoing edges from a vertex. More concretely, if we take the edge with stabiliser $\langle sh \rangle$, then the corresponding cylinder consists of edges starting from the vertex with stabiliser  $\langle a,b,s \rangle$, and are thus all in the same $\langle a,b,s \rangle$-orbit. In particular, this implies that $C(sh) = C_{\langle a,b,s \rangle}(sh) = C_{\langle a,b \rangle}(sh) \times \langle sh \rangle$. 

The cylinder stabiliser acts with two orbits on its vertices -- the central vertex and all the rest, and hence the tree of cylinders of $T_0$ has two edge orbits corresponding to these different inclusions. One of these edges has stabilisers equal to the edge stabilisers of the original tree (this is where we have the vertex being one of the `outside' vertices of the cylinder), whereas the other edge group is equal to the stabiliser of the cylinder, (conjugates of) $C(sh)$. 

If $C(sh)$ is not cyclic, then the collapsed tree of cylinders will collapse the corresponding edge, and we will return to the original tree. 

If $C(sh)$ is cyclic, then the tree of cylinders is just a subdivision of $T_0$ -- we have subdivided an edge, and given the new vertex the same stabiliser as the edge it is part of.

\textbf{\underline{Suppose $sh \sim_{\langle a, b \rangle} sg$: }}

If $sh \sim_{\langle a, b \rangle} sg$, then we orient the edges of $T_0$ as before and now we get that both $C_{\langle a,b \rangle}(sh)$  and $C_{\langle a,b \rangle}(sg)$ are trivial (since they are conjugate). Therefore, $C_{\langle a,b, s\rangle}(sh) = \langle sh \rangle$ and  $C_{\langle a,b, s\rangle}(sg) = \langle sg \rangle$.  

This means that a cylinder cannot contain either two outgoing or two incoming edges at any vertex. However, each cylinder does contain both an outgoing and incoming edge at each vertex. Hence the cylinder is a line and it is straightforward to verify that $C(sh) \cong \Z^2$. (Since $sh \sim_{\langle a, b \rangle} sg$, we may assume that $h=g$, and in this case, $C(sh) = \langle c, sh \rangle$ -- $c$ is acting as a translation, and therefore transitively on the vertices and edges of this line). 

In this case, there are again two orbits of vertices in the tree of cylinders -- one for the cylinders, one for the vertices of $T_0$ -- with stabilisers (conjugates of) $\langle a,b,s \rangle$ and $C(sh) \cong \Z^2$.  

Since the cylinder stabiliser acts transitively on its vertices, there is only one edge, whose stabiliser is (the conjugates of) $\langle sh \rangle$. \qedhere

\end{proof}

\begin{rem}
The tree of cylinders produced by this theorem realises a maximal preserved free factor system for the automorphism induced by $s$: it is an interesting question if this is true more generally (say, in higher rank or higher polynomial growth).
\end{rem}~

We now use Theorem~\ref{thm:tree_for_quadratic} to provide a nearly canonical tree for the general (not just UPG) case.

\begin{cor}
	Let $G \cong F_3 \rtimes_\Phi \Z$, and $\Phi$ is quadratically growing. Then $G$ admits an action on a nearly canonical tree, $T$, such that:
	\begin{enumerate}[(i)]
		\item The action is co-compact (equivalently, co-finite),
		\item Edge stabilisers are infinite cyclic,
		\item Vertex stabilisers are of the form $F_r \rtimes \Z$, where $r=0,1,2$. 
	\end{enumerate} 
\end{cor}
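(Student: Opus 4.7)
The plan is to combine the apparatus of this section with Proposition~\ref{prop:no_really_theres_an_action}. First I would choose $r$ so that $\Phi^r$ is UPG (available by Definition~\ref{defn:UPG}) and set $G_0 := F_3 \rtimes_{\Phi^r} \Z$, a normal finite-index subgroup of $G$. Proposition~\ref{prop:nice_basis} together with Corollary~\ref{cor:tree_for_quadratic} produces a good basis $\{a,b,c\}$ of $F_3$ and a tree $T_0$ realising $G_0$ as an HNN extension of $\langle a,b,s\rangle$. The preceding proposition of this section shows that the deformation space of $T_0$ is $\Aut(G_0)$-invariant, so the collapsed tree of cylinders $T := T_c^*$ computed in Theorem~\ref{thm:tree_for_quadratic} is $G_0$-canonical. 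Applying Proposition~\ref{prop:no_really_theres_an_action} upgrades this to a nearly $G$-canonical action of $G$ on $T$ that restricts to the $G_0$-action.

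It remains to verify (i)--(iii). For (i), because $F_3\trianglelefteq G_0$, the minimality of the $G_0$-action on $T$ forces the $F_3$-action to be minimal, and Lemma~\ref{lem:stabs_are_free_by_cyclic}(1) then gives that $F_3$ (and hence $G$) acts on $T$ with finite quotient graph.

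For (ii) and (iii), the key input is Lemma~\ref{lem:stabs_are_free_by_cyclic}(2): every $G$-stabiliser has the form $(\text{free part})\rtimes\langle t^k w\rangle$, with free part equal to the $F_3$-stabiliser. Since Proposition~\ref{prop:no_really_theres_an_action} provides a $G$-action restricting to the $G_0$-action, the $F_3$-action on $T$ is the same whether viewed via $G$ or via $G_0$, so the free parts can be read off directly from Theorem~\ref{thm:tree_for_quadratic}. In both cases of that theorem, $G_0$-edge stabilisers are conjugates of $\langle sh\rangle\cong\Z$ with trivial intersection with $F_3$, so $G$-edge stabilisers have trivial free part and are $F_0\rtimes\Z\cong\Z$, giving (ii). The $G_0$-vertex stabilisers in $T_c^*$ are either conjugates of $\langle a,b,s\rangle\cong F_2\rtimes\Z$ (free part $F_2$), conjugates of $C(sh)\cong\Z^2$ in Case 2 of Theorem~\ref{thm:tree_for_quadratic} (free part $F_1$), or the new cyclically stabilised vertices appearing in the subdivision subcase of Case 1 (trivial free part). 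Correspondingly the $G$-vertex stabilisers take the form $F_r\rtimes\Z$ with $r\in\{0,1,2\}$, proving (iii).

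There is essentially no remaining obstacle: the corollary is a packaging of the constructions developed in this section. The only subtle point requiring care is that the $F_3$-action, and hence the free parts of stabilisers, is unchanged when passing from $G_0$ to $G$ -- and this is precisely what is built into the construction in Proposition~\ref{prop:no_really_theres_an_action}.
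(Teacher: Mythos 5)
Your proposal is correct and follows essentially the same route as the paper: apply Proposition~\ref{prop:no_really_theres_an_action} to the collapsed tree of cylinders of Theorem~\ref{thm:tree_for_quadratic} and read off the stabilisers from the fact that the $G$-action extends the $G_0$-action (so the $F_3$-stabilisers, i.e.\ the free parts, are unchanged). Your verification of (i)--(iii), including the cyclically stabilised subdivision vertices in the first case and the $\Z^2$ cylinder vertices in the second, is just a more explicit version of the paper's brief argument.
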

\begin{proof}
We simply apply Proposition~\ref{prop:no_really_theres_an_action} to the collapsed tree of cylinders for $G_0$ above, Theorem~\ref{thm:tree_for_quadratic}, to get a nearly canonical action on the same tree. The fact that the $G$ action extends the $G_0$ action tells us about the stabilisers. (For example, edge stabilisers in $G$ must be infinite cyclic since their intersection with $F_3$ is trivial). 
\end{proof}

We now use this to prove the following theorem, which is part of Theorem~\ref{thm:rank3}.

\begin{thm}
\label{thm:quadratic}
Suppose $G \cong F_3 \rtimes_\varphi \Z$, and $\varphi$ is quadratically growing. Then $\Out(G)$ is finitely generated.
\end{thm}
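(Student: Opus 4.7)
The plan is to apply the Bass--Jiang filtration (Theorem~\ref{thm:BassJiang}) to the nearly canonical $G$-action on the tree $T$ supplied by the preceding corollary. Since $T$ is nearly canonical, $\Out^T(G)$ has finite index in $\Out(G)$, so it suffices to show that each of the five quotients in the filtration is finitely generated. The quotient graph $\Gamma$ is finite by Lemma~\ref{lem:stabs_are_free_by_cyclic}, so the quotient at (1) sits inside $\Aut(\Gamma)$ and is finite.

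For the three bottom quotients I would exploit that all edge stabilisers are infinite cyclic. Then $Z(G_e) \cong \Z$, so the quotient at (5) is a quotient of a finitely generated abelian group; and $\Aut(G_e) \cong C_2$ is finite, so the quotient at (3) is finite as well. For the quotient at (4), all vertex groups have the form $F_r \rtimes \Z$ with $r \leq 2$, and so Lemma~\ref{lem:centralisers_are_fg} gives that each centraliser $C_{G_{\iota(e)}}(G_e)$ is finitely generated; the quotient at (4) is then a quotient of their finite product.

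The interesting quotient is the one at (2). By Lemma~\ref{lem:quotient_by_McCool} it fits into an extension whose cokernel is a subgroup of $\prod_{e \in E(\Gamma)} A_e / \Ad(N_e)$ with $A_e \leq \Aut(G_e) \cong C_2$ (hence finite), and whose kernel is the product of McCool groups $\MC(G_v; \{G_e\}_{\iota(e)=v})$. Each vertex group is $F_r \rtimes \Z$ with $r \leq 2$ and each incident edge group is finitely generated (infinite cyclic), so Proposition~\ref{prop:McCool_for_exponential} (with the $r=0$ case being trivial) gives finite generation of each McCool group. Since there are finitely many vertex orbits, the product is finitely generated, and hence so is the quotient at (2). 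Assembling the filtration, $\Out^T(G)$ is finitely generated, so $\Out(G)$ is finitely generated.

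The conceptual work has already been done in producing the nearly canonical tree via Theorem~\ref{thm:tree_for_quadratic} and Proposition~\ref{prop:no_really_theres_an_action}; what remains is essentially a bookkeeping exercise through Bass--Jiang. The step I would expect to be the most delicate is the McCool computation at the $F_2 \rtimes \Z$ vertex stabiliser (whose defining automorphism is a Dehn twist of $F_2$), but this is already supplied by Proposition~\ref{prop:McCool_for_exponential} via the rank-$2$ classification of~\cite{Rank2Case}.
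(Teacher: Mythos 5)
Your proposal is correct and follows essentially the same route as the paper: apply the Bass--Jiang filtration to the nearly canonical tree, dispose of quotients (1), (3), (4), (5) using the finiteness of the quotient graph, the infinite cyclic edge groups, Lemma~\ref{lem:centralisers_are_fg}, and the abelian centres, and reduce quotient (2) via Lemma~\ref{lem:quotient_by_McCool} to the McCool groups of the $F_r \rtimes \Z$ vertex stabilisers ($r \leq 2$), which are handled by Proposition~\ref{prop:McCool_for_exponential}. The only cosmetic difference is that you justify the finiteness at (3) directly from $\Aut(\Z) \cong C_2$ where the paper cites Proposition~\ref{prop:normalisers}; both are valid.
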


\begin{proof}
We use the tree constructed above, and we calculate the quotients of $\Out^T(G)$ described in Theorem~\ref{thm:BassJiang}. The quotient graphs are finite, and therefore so is the quotient at (1). For the quotient at (2), the edge groups are all infinite cyclic, and therefore have finite outer automorphism group. So by Lemma~\ref{lem:quotient_by_McCool}, we only need to check the McCool groups of vertex groups. Since vertex groups are free by cyclic groups of rank 0, 1 or 2, these are finitely generated by Proposition~\ref{prop:McCool_for_exponential}. 

Since the edge groups are infinite cyclic, we may apply Proposition~\ref{prop:normalisers} to see that the quotient at (3) is finite. The quotient at (4) is finitely generated by Lemma~\ref{lem:centralisers_are_fg} and that at (5) as a quotient of a finitely generated abelian group.
\end{proof}

Our other main theorem is proved by combining Theorem~\ref{thm:linear} (restricted to rank 3) for the linear growth case, Theorem~\ref{thm:quadratic} for the quadratic growth case, Theorem~\ref{thm:exponential} for the exponential case and \cite{LevittGBS} for the periodic case.

\rankthree*

\bibliographystyle{plain}

\end{document}